\newtheorem{theorem}{Theorem}[section]
\newtheorem{lemma}[theorem]{Lemma}
\newtheorem{proposition}[theorem]{Proposition}
\theoremstyle{definition}
\newtheorem{definition}{Definition}[section]
\newtheorem{example}{Example}[section]
\newtheorem{remark}[example]{Remark}
\newtheorem{notations}[example]{Notations}
\newcommand\scalemath[2]{\scalebox{#1}{\mbox{\ensuremath{\displaystyle #2}}}}
\title{Julia sets of hyperbolic rational maps have positive Fourier dimension}
\author{Gaétan Leclerc}
\date{ }
\begin{document}

\maketitle

\begin{abstract}

Let $f:\widehat{\mathbb{C}}\rightarrow \widehat{\mathbb{C}}$ be a hyperbolic rational map of degree $d \geq 2$, and let $J \subset \mathbb{C}$ be its Julia set. We prove that $J$ always has positive Fourier dimension. The case where $J$ is included in a circle follows from a recent work of Sahlsten and Stevens \cite{SS20}. In the case where $J$ is not included in a circle, we prove that a large family of probability measures supported on $J$ exhibit polynomial Fourier decay: our result applies in particular to the measure of maximal entropy and to the conformal measure. 

\end{abstract}

\section{Introduction}

\subsection{Hausdorff dimension and Fourier transform}

The notion of Hausdorff dimension, first introduced in 1917, revealed itself useful to describe various geometric properties on fractals, and to give insight on underlying dynamical systems. Together with the development of measure theory and modern analysis, surprising links between fractal geometry and Fourier analysis arised. In his PhD thesis in 1935, \cite{Fr35}, Frostman introduced the notion of \say{energy integral} and related it to the Hausdorff dimension of a set. The result goes as follows: see \cite{Ma15} for a modern proof.

\begin{theorem}
Let $E \subset \mathbb{R}^d$ be a compact set. We denote by $\mathcal{P}(E)$ the space of borel probability measures supported in $E$. For $\mu \in \mathcal{P}(E)$, define its Fourier transform by $$ \widehat{\mu}(\xi) := \int_E e^{ - 2 i \pi x \cdot \xi } d\mu(x). $$
Then $$\dim_H(E) = \sup \left\{ \alpha \in [0, d] \ \Big{|} \ \exists \mu \in \mathcal{P}(E), \ \int_{\mathbb{R}^d} |\widehat{\mu}(\xi)|^2 |\xi|^{\alpha-d} d\xi  < \infty \right\} ,$$
where $\dim_H$ denotes the Hausdorff dimension.
\end{theorem}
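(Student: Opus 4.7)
The plan is to prove the statement via the classical energy method, which rewrites the Fourier integral as the Riesz $\alpha$-energy of $\mu$ and then invokes a Frostman-type characterization of $\dim_H$.

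First, I would recall (or reprove) the energy characterization of Hausdorff dimension: for a compact set $E \subset \mathbb{R}^d$,
$$ \dim_H(E) = \sup\left\{ \alpha \in [0,d] \ \Big| \ \exists \mu \in \mathcal{P}(E), \ I_\alpha(\mu) < \infty \right\}, $$
where $I_\alpha(\mu) := \iint |x-y|^{-\alpha} \, d\mu(x) \, d\mu(y)$ is the $\alpha$-energy. The $\geq$ direction is the mass distribution principle: if $I_\alpha(\mu) < \infty$ then Chebyshev's inequality forces $\mu(B(x,r)) = O(r^\alpha)$ on a set of positive $\mu$-measure, which lower bounds $\dim_H(E)$ by $\alpha$. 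The $\leq$ direction uses Frostman's lemma to construct, for any $\alpha < \dim_H(E)$, a probability measure supported on $E$ satisfying $\mu(B(x,r)) \leq C r^\alpha$; integrating this tail estimate via a layer-cake computation shows $I_\beta(\mu) < \infty$ for every $\beta < \alpha$.

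Second, I would identify the energy integral with the Fourier-side expression via Parseval. The key analytic input is that, as tempered distributions on $\mathbb{R}^d$, the Fourier transform of the Riesz kernel $K_\alpha(x) := |x|^{-\alpha}$ is $c(\alpha,d) |\xi|^{\alpha - d}$ for $0 < \alpha < d$. Writing $I_\alpha(\mu) = \langle K_\alpha * \mu, \mu \rangle$ and applying Parseval formally yields
$$ I_\alpha(\mu) = c(\alpha, d) \int_{\mathbb{R}^d} |\widehat{\mu}(\xi)|^2 |\xi|^{\alpha - d} \, d\xi, $$
which combined with the first step gives the theorem (the boundary cases $\alpha = 0$ and $\alpha = d$ can be handled separately, as they are trivial or vacuous).

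The main obstacle is making the Parseval identity rigorous, since $\mu$ may be singular and $K_\alpha$ is unbounded near the origin, so neither side is a priori well defined as an absolutely convergent integral. A clean workaround is to regularize: set $\mu_\varepsilon := \mu * \varphi_\varepsilon$ where $\varphi_\varepsilon(x) = \varepsilon^{-d} e^{-\pi |x/\varepsilon|^2}$ is a Gaussian mollifier, so that $\widehat{\mu_\varepsilon}(\xi) = \widehat{\mu}(\xi) \, e^{-\pi \varepsilon^2 |\xi|^2}$. On the Fourier side, $|\widehat{\mu_\varepsilon}(\xi)|^2$ increases monotonically to $|\widehat{\mu}(\xi)|^2$ as $\varepsilon \to 0$. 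On the energy side, one shows $I_\alpha(\mu_\varepsilon) \to I_\alpha(\mu)$ by monotone convergence applied to the positive lower semicontinuous function $K_\alpha$. For fixed $\varepsilon > 0$ the object $\mu_\varepsilon$ is Schwartz, so Parseval applies directly, and letting $\varepsilon \to 0$ closes the argument on both sides simultaneously.
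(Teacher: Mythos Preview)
The paper does not actually give a proof of this statement: it is quoted as background and attributed to Frostman, with a pointer to Mattila's book \cite{Ma15} for a modern treatment. So there is no ``paper's own proof'' to compare against.

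Your outline is correct and is essentially the standard argument one finds in Mattila: reduce to the energy characterization $\dim_H(E)=\sup\{\alpha:\exists\mu\in\mathcal P(E),\ I_\alpha(\mu)<\infty\}$ via Frostman's lemma and the mass distribution principle, then identify $I_\alpha(\mu)$ with $c(\alpha,d)\int|\widehat\mu(\xi)|^2|\xi|^{\alpha-d}\,d\xi$ using that the Riesz kernel $|x|^{-\alpha}$ has Fourier transform $c(\alpha,d)|\xi|^{\alpha-d}$, justified by a mollification and monotone convergence on the Fourier side. One small point worth tightening: the convergence $I_\alpha(\mu_\varepsilon)\to I_\alpha(\mu)$ on the spatial side is not literally monotone; the clean route is to pass to the limit on the Fourier side (which \emph{is} monotone), and then show separately that $I_\alpha(\mu)$ equals that limit, e.g.\ by writing $I_\alpha(\mu_\varepsilon)=\iint (K_\alpha*\psi_\varepsilon)(x-y)\,d\mu(x)\,d\mu(y)$ and combining a Fatou lower bound with a dominated upper bound coming from the elementary pointwise estimate $K_\alpha*\psi_\varepsilon\lesssim K_\alpha$. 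This is routine but should be mentioned if you are writing a self-contained proof.
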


We can interpret this equality in the following way. If $\alpha< \dim_H(E)$, then there exists a probability measure $\mu$ supported in $E$ such that $\widehat{\mu}(\xi)$ decays at least like $|\xi|^{- \alpha/2}$ \emph{on average}. At this point, it seems natural to ask whether this estimate can be improved to a \emph{pointwise} estimate. To investigate the question, we are lead to the notion of Fourier dimension.

\begin{definition}

Let $E \subset \mathbb{R}^d$ be a compact set. We define its Fourier dimension by
$$ \dim_F(E) := \sup \left\{ \alpha \in [0,d] \ | \ \exists \mu \in \mathcal{P}(E) , \exists C>0 , \forall \xi \in \mathbb{R}^d, \  |\widehat{\mu}(\xi)| \leq C(1+|\xi|)^{-\alpha/2}  \right\} .$$

\end{definition}

It is clear that the Fourier dimension will always be less than the Hausdorff dimension. But the other inequality is not always true: any set $E$ included in an affine subspace of $\mathbb{R}^d$ will always have zero Fourier dimension. One less trivial example is given by the triadic Cantor set in $\mathbb{R}$. Surprisingly, the linear structure of the Cantor set (more precisely, invariance under $\times 3 \mod 1$) is an obstruction to the Fourier decay of any probability measure supported on it.  \\

Very few explicit examples of sets with positive Fourier dimension are known, even though some works of Salem \cite{Sa51}, Kahane \cite{Ka66}, Bluhm \cite{Bl96} et al suggest that the property $\dim_F(E)=\dim_H(E)$ may be, in some sense and in some particular setting, generic. But constructing deterministic examples of such sets is difficult, as the Fourier dimension seems to be sensitive to the fine structure of the fractal. At the end of the 20th century, the only known examples were obtained by specific number-theoretic constructions in dimension 1, see for example \cite{Kau81} and \cite{QR03}. These constructions were generalized in dimension 2 in 2017, see \cite{Ha17}. 

\subsection{Recent development and main results}

Recently numerous advances have been made, involving various point of views and methods to study the Fourier transform of fractal measures. Using transfer operators, Jordan and Sahlsten \cite{JS16} studied invariant measures for the Gauss map. Li \cite{Li17} introduced a method based on renewal theorems for random walks to study stationary measures, that leads to several results in the linear IFS case for self-affine measures, see \cite{LS19} and related work \cite{So19}, \cite{Br19}, \cite{VY20}, \cite{Ra21}. See also \cite{ARW20} for a study of the nonlinear IFS case in dimension one. \\

The method that interest us here was introduced by Bourgain and Dyatlov in 2017 \cite{BD17}: they proved that the limit set of a non-elementary Fuchsian Schottky group, seen as a subset of $\mathbb{R}$, has positive Fourier dimension (notice that such a limit set is always a Cantor set). More specifically, they proved that Patterson-Sullivan measures exhibit polynomial Fourier decay in this setting. The new theoretical tool was the use of a previous theorem of Bourgain known as the \say{sum product phenomenon} \cite{Bo10}. An accessible introduction to these ideas can be found in the expository article of Green  \cite{Gr09}. A concrete example of such \say{sum-product} theorem is the following: 
\begin{theorem}[\cite{BD17}]
For all $\delta > 0$, there exist $\varepsilon_1, \varepsilon_2 > 0$ and $k \in \mathbb{N}$ such that the
following holds. Let $\mu$ be a probability measure on $\left[1/2, 1 \right]$ and let $N$ be a large integer.
Assume that for all $\sigma \in \left[N^{−1}, N^{−\varepsilon_1}\right]$, 
$$\sup_x \mu\left([x − \sigma, x + \sigma]\right) < \sigma^{\delta}. \quad (*) $$
Then for all $\eta \in \mathbb{R}$, $|\eta| \simeq N$ :
$$ \left| \int \exp(2 i \pi  \eta x_1 \dots x_k) d\mu(x_1) \dots d\mu(x_k) \right| \leq N^{- \varepsilon_2} . $$
\end{theorem}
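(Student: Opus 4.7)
The plan is to reduce the multilinear exponential integral to a bilinear oscillatory sum via iterated Cauchy-Schwarz, and then to apply Bourgain's discretized sum-product theorem to obtain bilinear cancellation. Heuristically, the non-concentration assumption $(*)$ is a $\delta$-dimensional Frostman-type bound valid at every dyadic scale between $N^{-1}$ and $N^{-\varepsilon_1}$, and the sum-product phenomenon forbids such a measure from being simultaneously invariant under additive and multiplicative actions. This tension is exactly what powers the cancellation in the bilinear exponential sum $\int e^{2\pi i \xi uv} d\mu(u) d\mu(v)$.

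Concretely, setting $I_k(\eta) := \int e^{2\pi i \eta x_1 \cdots x_k} d\mu^{\otimes k}$, I would perform $k-2$ Cauchy-Schwarz inequalities, freezing one variable at a time and squaring in the others. After each squaring step, the multiplicative factor $x_i$ in the frequency gets replaced by a difference $(y_i - z_i)$; iterating leads to an inequality of the form
$$|I_k(\eta)|^{2^{k-2}} \leq \int \exp\!\left( 2\pi i\, \eta\, \Big(\textstyle\prod_{j=1}^{k-2}(y_j - z_j)\Big)\, uv \right) d\mu^{\otimes (2k-2)}(\vec y, \vec z, u, v),$$
that is, an average of bilinear exponential sums $B(\xi) := \int e^{2\pi i \xi uv} d\mu(u) d\mu(v)$ with frequency $\xi := \eta \prod_j (y_j - z_j)$. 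The non-concentration hypothesis $(*)$ applied at scale $N^{-\varepsilon_1}$, together with a union bound, ensures that with $\mu^{\otimes 2(k-2)}$-probability close to one, $|y_j - z_j| \geq N^{-\varepsilon_1}$ holds for every $j$, so that $|\xi| \geq N^{1 - (k-2)\varepsilon_1}$ remains of polynomial size in $N$ provided $\varepsilon_1$ is small enough.

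The decisive ingredient is then a bilinear estimate $|B(\xi)| \leq |\xi|^{-\varepsilon}$ for $|\xi|$ comparable to $N$, which I would derive from Bourgain's discretized sum-product theorem \cite{Bo10}. The heuristic justification is that any failure of decay of $B(\xi)$ would force the support of $\mu \otimes \mu$ to concentrate along level curves of $(u,v) \mapsto uv$ at scale $|\xi|^{-1}$, which would in turn endow the support of $\mu$ with both additive and multiplicative structure at that scale, contradicting $(*)$. This is the real hard part of the argument: translating the soft Frostman bound $(*)$ into the combinatorial input (regular trees of dyadic cells, approximate subrings) required by Bourgain's theorem demands a careful multi-scale pigeonhole, and the constants $\varepsilon_1, \varepsilon_2, k$ are inherited from this input without any explicit control.
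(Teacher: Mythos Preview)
The paper does not give its own proof of this statement: Theorem~1.2 is quoted from \cite{BD17} as background, and the paper instead imports the complex analogue (Theorem~5.1, due to Li \cite{Li18}) as a black box, from which it derives the tailored versions Theorems~5.2 and~5.3. So there is no proof in the paper to compare your proposal against.

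That said, your outline has a genuine strategic gap. Your plan is to run $k-2$ rounds of Cauchy--Schwarz to collapse the $k$-linear integral to an average of bilinear sums $B(\xi)=\int e^{2\pi i\xi uv}\,d\mu(u)\,d\mu(v)$, and then to feed the Frostman hypothesis $(*)$ into Bourgain's discretized sum-product theorem to obtain $|B(\xi)|\le|\xi|^{-\varepsilon}$. But this last bilinear estimate is precisely the $k=2$ case of the theorem you are trying to prove, and it is \emph{not} available under $(*)$ alone when $\delta$ is small: if it were, the statement would hold with $k=2$ for every $\delta>0$, whereas in fact the required $k$ grows as $\delta\to 0$. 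Your Cauchy--Schwarz steps only discard information, so reducing to the bilinear case cannot manufacture the missing cancellation. Concretely, nothing in \cite{Bo10} yields $|B(\xi)|\le|\xi|^{-\varepsilon}$ for a measure satisfying merely a $\delta$-Frostman bound with $\delta$ small; counterexamples come from measures with strong approximate additive structure at many scales, which satisfy $(*)$ but for which $\widehat\mu$, and hence $B$, does not decay.

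The actual argument in \cite{BD17} runs in the opposite direction: one shows that if the $k$-linear sum is large then the $k$-fold \emph{multiplicative} convolution $\mu^{\boxtimes k}$ has large additive energy at scale $N^{-1}$, and the discretized ring/sum-product theorem of \cite{Bo10} then forces concentration incompatible with $(*)$ once $k$ is large enough in terms of $\delta$. In other words, the parameter $k$ is used constructively to flatten $\mu$ under repeated multiplicative convolution until the sum-product mechanism bites; it is not a parameter to be differenced away.
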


Roughly, the underlying mechanics behind this kind of theorem is the idea that enhancing some \say{multiplicative structure} may spread the phase so that some cancellations happen. This kind of result is true if we suppose that $\mu$ does not concentrate too much the phase: this is the hypothesis $(*)$.
This plays a key role in their paper: to prove that the Fourier transform of a measure enjoys polynomial decay, one may relate it to a sum of exponential (an integral for a discrete measure) on which this sum product phenomenon applies. The difficulty then is to prove that $(*)$ is satisfied. \\

Soonly after, Li, Naud and Pan \cite{LNP19} generalized the result of Bourgain and Dyatlov for limit sets of general Kleinian Schottky groups. The limit set is still a Cantor set, but in a 2-dimensional setting. The proof follows the same idea as in $\cite{BD17}$: to prove that the Fourier transform of a measure decays, one may relate it to a sum of exponential on which a non-concentration property is satisfied. The main difficulty, once again, lies in the proof of this non-concentration property. \\

Recently, Sahlsten and Stevens \cite{SS20} generalized the result of Bourgain and Dyatlov in a broader setting. They showed that for any \say{totally non linear} Cantor set in the real line, with some hyperbolicity conditions on the underlying dynamic, a large class of invariant measures
called equilibrium measures exhibit polynomial Fourier decay. The core of the proof is the same, but three more ingredients are used: the large class of measure is introduced via the \emph{thermodynamical formalism}, a large deviation technique is used (these ideas already appeared in \cite{JS16}), and the non concentration property is obtained via contraction estimates for suitable transfer operators. \\

In this article, we build upon these previous papers to study the case of \emph{Julia sets} of hyperbolic rational maps in the Riemann sphere (see the section 2). This is the first result of this kind for sets that are not Cantor sets. More precisely, our main result is the following.

\begin{theorem}

Let $f:\widehat{\mathbb{C}} \rightarrow \widehat{\mathbb{C}}$ be a hyperbolic rational map of degree $d \geq 2$. Let $J$ denote its Julia set. If $J$ is included in a circle, then $J$ has positive Fourier dimension, seen as a compact subset of $\mathbb{R}$ after conjugation with a Möbius transformation. If $J$ is not included in a circle, then $J$ has positive Fourier dimension, seen as a compact subset of $\mathbb{C}$.

\end{theorem}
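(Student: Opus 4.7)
The circle case follows directly from Sahlsten--Stevens after Möbius conjugation, so the plan focuses on the non-circle case. My plan is to transplant the strategy of Bourgain--Dyatlov, Li--Naud--Pan, and Sahlsten--Stevens into the complex-analytic setting of Julia sets. The hyperbolicity of $f$ on $J$ provides a Markov partition $\{P_a\}_{a\in \mathcal{A}}$ of a neighborhood of $J$, and the inverse branches $f_a^{-1}$ are uniformly contracting conformal maps that form a graph-directed conformal IFS with attractor $J$. Thermodynamic formalism on the corresponding subshift of finite type then associates to each Hölder potential $\varphi$ an equilibrium state $\mu_\varphi$; the measure of maximal entropy ($\varphi\equiv 0$) and the conformal measure of exponent $\delta_H = \dim_H J$ (with $\varphi = -\delta_H\log|f'|$) both fit in this class.

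For $|\xi|\simeq N$ large, I would expand the Fourier integral at a scale $n\asymp \log N$ using the transfer operator:
\begin{equation*}
\widehat{\mu_\varphi}(\xi) = \sum_{\mathbf{a}\in \mathcal{A}^n_{\mathrm{adm}}}\int e^{-2i\pi\,\xi\cdot f_{\mathbf{a}}^{-1}(z)}\, e^{S_n\varphi(f_\mathbf{a}^{-1}(z))}\,d\mu_\varphi(z).
\end{equation*}
Locally the phase is, up to lower-order terms, the real part of $\overline{\xi}\,(f_\mathbf{a}^{-1})'(z_0)\,(z-z_0)$, so grouping words by the complex value of the derivative $(f_\mathbf{a}^{-1})'(z_0)\in\mathbb{C}^*$ recasts the problem as a complex-multiplicative exponential sum of the Bourgain--Dyatlov type, in the two-dimensional variant needed when $J\subset\mathbb{C}$ (as in Li--Naud--Pan). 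The hard input required by the sum--product machine is then a \emph{non-concentration estimate}: the empirical distribution of the $n$-step complex derivatives, weighted by $\mu_\varphi$, should not concentrate in any $\sigma$-neighborhood of a real-linear subspace of $\mathbb{C}$ at scales $\sigma\in[N^{-1},N^{-\varepsilon_1}]$.

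The assumption that $J$ is not contained in a circle is exactly what makes this non-concentration possible. If every multiplier $(f^p)'(x)$ at periodic $x=f^p(x)$ were real, density of repelling periodic points in $J$ together with standard rigidity arguments would confine $J$ to a real-analytic $f$-invariant curve and, ultimately, to a generalized circle; under the hypothesis, one can therefore find periodic points whose multiplier arguments are $\mathbb{Q}$-linearly independent modulo $\pi$. I would then promote this qualitative Zariski-density into a quantitative Dolgopyat-type spectral gap for the twisted transfer operators
\begin{equation*}
\mathcal{L}_{\varphi,\xi}g(z)=\sum_a e^{\varphi(f_a^{-1}(z))}\, e^{-2i\pi\,\xi\cdot f_a^{-1}(z)}g(f_a^{-1}(z)),
\end{equation*}
whose decay of high iterates, combined with a large-deviation argument along $\mu_\varphi$-typical orbits, yields the required non-concentration of the derivative cocycle and hence polynomial Fourier decay $|\widehat{\mu_\varphi}(\xi)|\lesssim|\xi|^{-\varepsilon}$.

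The main obstacle is this non-concentration / spectral-gap step: converting the qualitative hypothesis ``$J$ not in a circle'' into scale-uniform, polynomially-rated non-concentration of the complex derivative cocycle. A secondary but non-trivial technical difficulty absent from the Cantor settings of the earlier papers is that Markov partitions of Julia sets are never truly disjoint --- adjacent pieces share boundary arcs --- so the transfer-operator calculus must be set up on a neighborhood of $J$ with careful tracking of overlaps rather than on clean disjoint cylinders.
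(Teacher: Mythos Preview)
Your plan is essentially the paper's proof: expand $\widehat{\mu_\varphi}$ via iterates of the transfer operator, linearize the phase to a product of complex derivatives, apply Li's complex sum--product theorem, and feed it the required non-concentration via a Dolgopyat-type spectral gap that is available precisely when $J$ is not contained in a circle, with large deviations controlling the regular words. The circle case is handled exactly as you say.

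One concrete slip worth flagging: the twisted operator you wrote, $\mathcal{L}_{\varphi,\xi}g(z)=\sum_a e^{\varphi(f_a^{-1}z)}e^{-2i\pi\,\xi\cdot f_a^{-1}(z)}g(f_a^{-1}z)$, is twisted by the \emph{position} of the preimage and does not see the derivative cocycle at all; contracting it would give equidistribution of orbit points, not non-concentration of $\log (f_\mathbf{a}^{-1})'$. The operator actually needed (and the one the paper uses, following Oh--Winter) is twisted by both the modulus and the argument of the derivative,
\[
\mathcal{L}_{\varphi,it,l}h(x)=\sum_a e^{\varphi(g_a x)}\,|g_a'(x)|^{it}\Bigl(\tfrac{g_a'(x)}{|g_a'(x)|}\Bigr)^{-l} h(g_a x),\qquad t\in\mathbb{R},\ l\in\mathbb{Z},
\]
and the uniform contraction $\|\mathcal{L}_{\varphi,it,l}^n\|\lesssim(|t|+|l|)^2\rho^n$ is what converts, via a Fourier expansion on $\mathbb{C}/2\pi i\mathbb{Z}$, into the projective non-concentration of the $\zeta_{j,\mathbf{A}}(\mathbf{b})=e^{2\lambda n}g_{\mathbf{a}_{j-1}'\mathbf{b}}'(x_{\mathbf{a}_j})$. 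Relatedly, the relevant non-degeneracy input is not a Diophantine condition on multiplier arguments but the Oh--Winter NLI/NCP pair, which holds automatically once $J$ is not in a circle. Finally, the ``overlap'' issue you anticipate is real but manifests differently than you suggest: the paper resolves it by carrying \emph{two} nested families of open sets, the standard $U_a$ on which the Ruelle/Oh--Winter theory lives and an auxiliary convex family $D_a\supset U_a$ on which the linearization estimates (via Koebe's theorems) are performed.
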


In fact, the case where $J$ is included in a circle is already known: Theorem 9.8.1, page 227 and remark page 230 in \cite{Be91} tells us that in this case, $J$ is either a circle, or a Cantor set. In the first case, the Fourier dimension is 1. In the second case, our hyperbolicity assumption, and Theorem 3.9 in \cite{OW17} ensure that the \say{total non-linearity} condition is satisfied, allowing us to apply the work of Sahlsten and Stevens. 
\newpage
In the case where $J$ is not included in a circle, we prove the following result.

\begin{theorem}

Let $f:\widehat{\mathbb{C}} \rightarrow \widehat{\mathbb{C}}$ be a hyperbolic rational map of degree $d \geq 2$. Denote by $J \subset \mathbb{C}$ its Julia set, and suppose that $J$ is not included in a circle. Let $V$ be an open neighborhood of $J$, and consider any potential $\varphi \in C^1(V,\mathbb{R})$. Let $\mu_\varphi \in \mathcal{P}(J)$ be its associated equilibrium measure. Then:
$$ \exists \varepsilon>0, \ \exists C>0, \ \forall \xi \in \mathbb{C}, \ |\widehat{\mu}_\varphi(\xi)| \leq C (1+|\xi|)^{-\varepsilon} .$$

\end{theorem}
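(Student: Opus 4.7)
The strategy is to adapt the blueprint of \cite{BD17}, \cite{LNP19}, \cite{SS20} to the holomorphic two-dimensional setting of Julia sets. Four ingredients are needed: a self-similar decomposition of $\mu_\varphi$ via thermodynamical formalism, large-deviation control of the dynamics, a quantitative non-concentration estimate exploiting that $J$ is not contained in a circle, and a two-dimensional exponential sum theorem to conclude.

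First I would use hyperbolicity to build a finite Markov partition $\{U_a\}_{a \in \mathcal{A}}$ for $f$ on a neighborhood of $J$ and describe $\mu_\varphi$ through the Ruelle--Perron--Frobenius transfer operator $\mathcal{L}_\varphi$. After the standard normalization producing a new potential $\tilde\varphi$ with $\mathcal{L}_{\tilde\varphi} \mathbf{1} = \mathbf{1}$, iterating the duality identity $\int g\, d\mu_\varphi = \int \mathcal{L}_{\tilde\varphi}^n g\, d\mu_\varphi$ yields, for every $n \geq 1$,
$$\widehat{\mu}_\varphi(\xi) = \sum_{\mathbf{a} \in \mathcal{A}^n} \int e^{S_n \tilde\varphi(g_{\mathbf{a}}(x))} e^{-2 i \pi \xi \cdot g_{\mathbf{a}}(x)} d\mu_\varphi(x),$$
where $g_{\mathbf{a}}$ is the holomorphic inverse branch of $f^n$ indexed by the word $\mathbf{a}$. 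Choosing $n \simeq c \log |\xi|$ with $c$ small and using Koebe distortion, I would linearize $g_{\mathbf{a}}(x) \approx z_{\mathbf{a}} + \lambda_{\mathbf{a}} (x - y_{\mathbf{a}})$ with $\lambda_{\mathbf{a}} = g_{\mathbf{a}}'(y_{\mathbf{a}}) \in \mathbb{C}^*$ and $z_{\mathbf{a}} = g_{\mathbf{a}}(y_{\mathbf{a}})$. Large deviations for $S_n \log|f'|$ then let me restrict to words with $|\lambda_{\mathbf{a}}| \simeq e^{-n \chi}$ near the Lyapunov exponent $\chi$, reducing the task to a polynomial bound on the resulting weighted exponential sum in the pairs $(\lambda_{\mathbf{a}}, z_{\mathbf{a}})$.

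The plan is then to invoke a two-dimensional sum-product / Bourgain-type estimate analogous to those of \cite{LNP19}, whose hypothesis is a quantitative non-concentration property on the discrete set $\{(\lambda_{\mathbf{a}}, z_{\mathbf{a}})\}$. I expect this non-concentration step to be the main obstacle. Since $f$ is conformal, each $\lambda_{\mathbf{a}} \in \mathbb{C}^*$ encodes both a scaling and a rotation, and the natural approach is a Dolgopyat-type contraction estimate for the complex family of twisted transfer operators $\mathcal{L}_{\tilde\varphi + i \tau}$ where $\tau(z) = -2\pi \mathrm{Re}(\overline{\xi} z)$. The geometric obstruction to such contraction would be an invariant structure forcing coherent alignment of all derivatives of inverse branches, which in the conformal setting is exactly a common round circle preserved by the dynamics. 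This obstruction is precisely excluded by the hypothesis that $J$ is not contained in a circle, cf.\ the total non-linearity dictionary of \cite{OW17}.

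Once the non-concentration bound is established with an effective exponent, the two-dimensional exponential sum theorem will yield a polynomial bound $|\xi|^{-\varepsilon'}$ on the reduced exponential sum. Combining with the linearization error (which remains under control provided $c$ is small enough) and with the normalization of $\tilde\varphi$ gives the required decay $|\widehat{\mu}_\varphi(\xi)| \leq C(1+|\xi|)^{-\varepsilon}$. The delicate book-keeping consists in balancing the depth $n$ of iteration, the linearization scale, and the exponent produced by the sum-product input; but the decisive analytic step remains the passage from the single geometric condition \say{$J$ not in a circle} to a quantitative non-concentration property of the inverse-branch derivatives, for which no explicit group structure is available to fall back on.
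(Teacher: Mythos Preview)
Your proposal is essentially the same strategy as the paper's, and the overall architecture---Markov partition, normalized transfer operator, iteration plus Koebe linearization, large deviations to isolate regular words, reduction to a sum-product input, and non-concentration via a Dolgopyat-type spectral gap citing \cite{OW17}---is exactly what the paper does.

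Two points of imprecision are worth flagging. First, the twisted transfer operator you write, $\mathcal{L}_{\tilde\varphi+i\tau}$ with $\tau(z)=-2\pi\mathrm{Re}(\overline{\xi}z)$, twists by \emph{position}; what is actually needed (and what the paper uses, following \cite{OW17}) is the two-parameter family $\mathcal{L}_{\varphi,it,l}$ twisted by $|g'|^{it}(g'/|g'|)^{-l}$, i.e.\ by the modulus and argument of the \emph{derivative}. Your later sentence about ``coherent alignment of all derivatives of inverse branches'' shows you have the right intuition, but the operator you wrote down would not give the required non-concentration. Second, the paper does not simply iterate $n$ times and linearize: it uses a $(2k+1)n$-fold iterate with an alternating block structure $\mathbf{A}\leftrightarrow\mathbf{B}$ so that, after linearization, the phase factorizes as a product $\eta\,\zeta_{1,\mathbf{A}}(\mathbf{b}_1)\cdots\zeta_{k,\mathbf{A}}(\mathbf{b}_k)$ of $k$ near-independent complex derivatives, which is precisely the shape required by Li's complex sum-product theorem \cite{Li18}. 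Your sketch collapses this into a single linearization step, which would not by itself produce the multiplicative structure the sum-product estimate feeds on.
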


In particular, our result applies to the \textbf{conformal measure} (also called the measure of maximal dimension) and to the \textbf{measure of maximal entropy}, see the section 2. Since the measure of maximum entropy is related to the harmonic measure in a polynomial setting \cite{MR92}, one may expect our result to have some corollaries on the Dirichlet problem with boundary conditions on quasicircles or to the Brownian motion (which is related to the heat equation). Finally, one should stress out that the conclusion of Theorem 1.4 no longer applies if $J$ is a whole circle: in this case, $f$ is conjugated to $z \mapsto z^d$ (\cite{OW17}), and so any invariant probability measure which enjoys Fourier decay must be the Lebesgue measure on the circle (this is an easy exercise using Fourier series).

\subsection{Strategy of the proof}

We follow the ideas in $\cite{SS20}$ and adapt them to our case, where topological difficulties arise from the 2-dimensional setting. The strategy of the proof and organization of the paper goes as follows. 

\begin{itemize}
    \item In section 2, we collect facts about thermodynamic formalism in the context of hyperbolic complex dynamics. The section 2.3 is devoted to the construction of \emph{two} families of open sets adapted to the dynamics. In the section 3.5 we state a large deviation result about Birkhoff sums.
    \item In the section 3 we use the large deviations to derive order of magnitude for some dynamically-related quantities.
    \item The proof of Theorem 1.4 begins in the section 4. Using the invariance of the equilibrium measure by a transfer operator, we relate its Fourier transform to a sum of exponentials by carefully linearizing the phase. We then use a generalized version of Theorem 1.2.
    \item The section 5 is devoted to a proof of the non-concentration hypothesis that is needed. To this end, we use a generalization of Theorem 2.5 in \cite{OW17}, which is a uniform contraction property of twisted transfer operators. 
\end{itemize}

Even if the strategy of the proof is borrowed from \cite{SS20}, they are some noticeable difficulties that arise in our setting that were previously invisible. In dimension 1, estimates of various diameters and linearization processes are made easier by the fact that connected sets are convex. In particular, in dimension 1, the dynamics map convex sets into convex sets.

In dimension 2, one may not associate to the Markov partition a family of open sets that are convex and still satisfy the properties that we usually ask for them: see the remark after Proposition 2.2. We overcome this difficulty by constructing \emph{two} families of open sets associated to the dynamics: the usual open sets related to Markov pieces, in which the theory of \cite{Ru78} and \cite{OW17} applies, and a new one where computations and control are made easier. 
The second difficulty is that the dynamics may twist and deform even the sets in our second family. We overcome this difficulty by taking advantage of the conformality of the dynamics, through the use of the Koebe 1/4-theorem which allows us to have a good control over such deformations. \\

Another difficulty comes in the proof of the non concentration hypothesis: the complex nature of the dynamics suggests non concentration in modulus and arguments of some dynamically related quantities. Arguments being defined modulo $2 \pi$ induces technicalities that are invisible in $\cite{SS20}$.

\subsection{Acknowledgments}

The author would like to thank his PhD supervisor, Frederic Naud, for  numerous helpful conversations and for pointing out various helping references. The author would also like to thank Jialun Li for introducing him to the method of Dolgopyat. The author would like to thank the referee for useful remarks concerning the historical overview and for spotting a few missing arguments in the text. This work is part of the author's PhD and is funded by the Ecole Normale Superieure de Rennes. The author have no relevant financial or non-financial interests to disclose. 

\newpage

\section{Thermodynamic formalism on hyperbolic Julia sets}

\subsection{Hyperbolic Julia sets}

We recall standard definitions and results about holomorphic dynamical systems. For more background, we recommend the notes of Milnor \cite{Mi90}. \\

Denote by $\widehat{\mathbb{C}}$ the Riemann sphere. Let $f: \widehat{\mathbb{C}} \rightarrow \widehat{\mathbb{C}}$ be a rational map of degree $d \geq 2$. \\
Recall that a familly of holomorphic maps defined on an open set $D \subset \widehat{\mathbb{C}}$ is called \emph{normal} if from every sequence of maps from the family there exists a subsequence that converges locally uniformly.
The Fatou set of $f$ is the largest open set in $\widehat{\mathbb{C}}$ where the family of iterates $ \left\{ f^n , \ n \in \mathbb{N}\right\}$ is a normal family. Its complement is called the Julia set and is denoted by $J$. In our case, it is always nonempty and compact. (Lemma 3.5 in \cite{Mi90}) \\

Since $f(J) = f^{-1}(J) = J$, the couple $(f,J)$ is a well defined dynamical system, describing a chaotic behavior. For example, the action of $f$ on $J$ is topologically mixing: for any open set $U$ such that $U \cap J \neq \emptyset$, there exists $n \geq 0$ such that $f^n(U \cap J) = J$. (See Corollary 11.2 in \cite{Mi90}) \\

A case where the dynamics of $f$ on $J$ is particularly well understood is when $f$ is supposed to be \textbf{hyperbolic}, and we will assume it from now on. It means that the orbit of every critical point converges to an attracting periodic orbit. (In other words, if $p \in \widehat{\mathbb{C}}$ is a critical point for $f$, then there exists $p_0 \in \widehat{\mathbb{C}}$ and $m>0$ such that $p_0$ is an attracting fixed point for $f^m$ and $f^{km}(p) \underset{k \rightarrow \infty}{\longrightarrow} p_0$.) In this case $J \neq \widehat{\mathbb{C}}$, and so by conjugating $f$ with an element of $PSL(2,\mathbb{C})$ we can always see $J$ as a compact subset of $\mathbb{C}$. The hyperbolicity condition is equivalent to the existence of constants $c_0$ and $1<\kappa<\kappa_1$, and of a small open neighborhood $V$ of $J$ such that:
$$ \forall x \in V, \ \forall n \geq 0 \ , c_0 \kappa^n \leq |(f^n)'(x)| \leq \kappa_1^n  .$$

This is Theorem 14.1 in \cite{Mi90}. From now on, we also assume that $J$ is not contained in a circle.

\subsection{Pressure and equilibrium states}

\begin{definition}[\cite{Ru89}, \cite{OW17}, \cite{Ru78}, \cite{PU17}] Let $\varphi \in C^1(V,\mathbb{R})$ be a potential.
Denote by $\mathcal{M}_f$ the compact space of all $f$-invariant probability measures on $J$, equipped with the weak*-topology. Denote, for $\mu \in \mathcal{M}_f$, $h_f(\mu)$ the entropy of $\mu$.
Then, the map $$ \mu \in \mathcal{M}_f \longmapsto h_f(\mu) + \int_J \varphi d \mu $$
is upper semi-continuous, and admits a unique maximum, denoted by $P(\varphi)$. \\
The unique measure $\mu_\varphi$ that satisfies
$$ P(\varphi) = h_f(\mu_\varphi) + \int_J \varphi d \mu_\varphi $$
is called the equilibrium state associated to the potential $\varphi$. \\
This measure is ergodic on $(J,f)$ and its support is $J$.

\end{definition}

Two potentials are of particular interest. Define the \textbf{distortion function} by $\tau(x) := \log( |f'(x)| )$ on $V$. If $V$ is chosen small enough, $\tau$ is real analytic, in particular it is $C^1$. We then know that there exists a unique $\delta_J \in \mathbb{R}$ such that $P(- \delta_J \tau)=0$.
In this case, $\delta_J$ is the Hausdorff dimension of $J$, and the equilibrium state $\mu_{- \delta_J \tau}$ is equivalent to the $\delta_J$-dimensional Hausdorff measure on $J$. It is sometimes called the \textbf{conformal measure}, or the \textbf{measure of maximal dimension}. Moreover, we have the formula $$ \dim_H(J) = h_f(\mu_{-\delta_J \tau})/ \int \tau d\mu_{- \delta_J \tau} .$$ See \cite{PU09}, corollary 8.1.7 and Theorem 8.1.4 for a proof.  \\

Another important example is the following. If we set $\varphi=0$, then the pressure is given by the largest entropy available for invariant measures $\mu$. The associated equilibrium state is then called the \textbf{measure of maximal entropy}. In the context where $f$ is a polynomial, this measure coincides with the \textbf{harmonic measure} with respect to $\infty$, see \cite{MR92}. 

\newpage

\subsection{Markov partitions}

Hyperbolic rational maps are especially easy to study thanks to the existence of \textbf{Markov partitions} of the Julia set. Proposition 2.1 and some of the following results are extracted from the subsection 2 of $\cite{Ru89}$ and $\cite{OW17}$.

\begin{proposition}[Markov partitions]

For any $\alpha_0 > 0$ we may write $J$ as a finite union $J = \cup_{a \in \mathcal{A}} P_a$ of compact nonempty sets $P_a$ with $\text{diam} P_a < \alpha_0$, and $|\mathcal{A}| \geq d$. Furthermore, with the topology of $J$,
\begin{itemize}
    \item $\overline{\text{int}_J P_a} = P_a $,
    \item $\text{int}_J P_a \cap \text{int}_J P_b = \emptyset$   if $a \neq b$,
    \item each $f(P_a)$ is a union of sets $P_b$.
\end{itemize}

\end{proposition}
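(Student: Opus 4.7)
The proposition is the classical existence of Markov partitions for hyperbolic Julia sets, so my plan is to reproduce the construction that underlies the proofs in Ruelle~\cite{Ru89} and Oh--Winter~\cite{OW17} rather than invent something new. I would proceed in three stages: set up uniform scales from hyperbolicity, build a good initial Voronoi-type cover, and then force the Markov condition by refining along preimages.

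\textbf{Stage 1 (scales).} Since $f$ is hyperbolic, the post-critical set has positive distance from $J$, so on a neighborhood $V$ of $J$ the map $f$ is a local biholomorphism and $|f'|\geq\kappa>1$. By compactness of $J$, I fix a scale $\epsilon\in(0,\alpha_0)$ so small that $f$ is injective on every ball $B(x,2\epsilon)$ with $x\in J$, and so that $f^{-1}$ on such a ball contracts by factor $\kappa^{-1}$.

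\textbf{Stage 2 (initial cover).} Pick a finite $\epsilon/3$-dense set $\{x_a\}_{a\in\mathcal{A}_0}\subset J$ with $|\mathcal{A}_0|\geq d$, and form the associated Dirichlet cells in $\widehat{\mathbb{C}}$, intersected with $J$, to get compact pieces $R_a$. These cover $J$, have diameter less than $\alpha_0$, and satisfy the interior conditions $\overline{\operatorname{int}_J R_a}=R_a$ and $\operatorname{int}_J R_a\cap\operatorname{int}_J R_b=\emptyset$, provided the boundary walls of the Voronoi cells are perturbed slightly so as to avoid the countable set of preimages of periodic and critical points. However, the Markov condition can fail because $f(R_a)$ may cross an interior wall of some $R_b$.

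\textbf{Stage 3 (Markov refinement).} I force the Markov property by intersecting with preimages. For an integer $N$ large enough to dominate the fine geometry, set
\[
P_{\underline{a}} \;=\; R_{a_0}\cap f^{-1}(R_{a_1})\cap\cdots\cap f^{-N}(R_{a_N})
\]
over admissible words. Each nonempty $P_{\underline a}$ has diameter controlled by $\epsilon\kappa^{-N}$ thanks to the uniform expansion from Stage 1, so for $N=0$ the diameter bound $\alpha_0$ is already in force; the Markov property for the resulting finite family is built in, because $f(P_{\underline a})$ equals a union of pieces at depth $N-1$ which, after a final refinement step, are themselves unions of our chosen pieces. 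Relabelling the nonempty cylinders gives the desired $\mathcal{A}$ with $|\mathcal{A}|\geq d$ (automatic since $f|_J$ is $d$-to-$1$).

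\textbf{Main obstacle.} I expect the real work to be the topological regularity $\overline{\operatorname{int}_J P_a}=P_a$ after the preimage refinement. Because $J$ is a genuine fractal (quasicircle, dendrite-like, or Cantor-like depending on $f$), arbitrary partition boundaries can create thin slivers or isolated points that belong to $P_a$ without lying in the $J$-closure of its $J$-interior. The standard remedy, which I would invoke, is to choose the initial Voronoi walls as short analytic arcs transverse to the local expanding direction of $f$, and then to argue by a Baire-category / genericity argument that these arcs and all their finite forward and backward $f$-iterates avoid the countable set of periodic orbits and preimages of critical values. This transversality, propagated by the conformality and expansion of $f$, is what guarantees the three topological conditions survive the Markov refinement; it is the one place where hyperbolicity is used in a nontrivial way beyond the uniform expansion estimate.
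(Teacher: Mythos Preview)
The paper does not actually prove this proposition: it is quoted from \cite{Ru89} and \cite{OW17} without argument, so there is no in-paper proof to compare against. That said, your sketch does not reproduce Ruelle's construction, and Stage~3 contains a genuine gap.

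Refining an arbitrary partition by preimages does \emph{not} produce a Markov partition. With $P_{\underline a}=R_{a_0}\cap f^{-1}(R_{a_1})\cap\cdots\cap f^{-N}(R_{a_N})$ and $f$ injective on $R_{a_0}$, one has
\[
f(P_{a_0a_1\cdots a_N}) \;=\; f(R_{a_0})\cap P_{a_1\cdots a_N},
\]
and this equals $P_{a_1\cdots a_N}$ (hence a union of depth-$N$ cylinders $P_{a_1\cdots a_N b}$) only if $f(R_{a_0})\supset P_{a_1\cdots a_N}$, which for a generic transition forces $f(R_{a_0})\supset R_{a_1}$ --- i.e.\ the original cover $\{R_a\}$ was already Markov. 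A one-line check on the doubling map $x\mapsto 2x\bmod 1$ with the non-Markov partition $R_0=[0,\tfrac13]$, $R_1=[\tfrac13,1]$ shows that the depth-$1$ cylinders still fail the Markov condition (for instance $f(P_{01})=[\tfrac13,\tfrac23]$ meets but does not contain $P_{11}$). Your ``final refinement step'' is therefore exactly the missing content of the theorem, not a detail. Ruelle's actual argument for expanding maps proceeds differently: one does not pull back a fixed cover, but rather uses the expansion to manufacture a family whose \emph{boundaries} are pushed forward coherently by $f$, so that the inclusion $f(P_a)\supset P_b$ is built in from the start.

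A smaller issue: in Stage~1 you assert $|f'|\geq\kappa>1$ on a neighborhood of $J$, but hyperbolicity only guarantees $|(f^n)'|\geq c_0\kappa^n$; the first derivative need not exceed $1$ pointwise. The paper handles exactly this in its proof of Proposition~2.2 by passing to an adapted conformal metric $\rho(z)|dz|$ for which $f$ becomes genuinely expanding, and the same device (or replacing $f$ by an iterate $f^N$) would be required here before your Stage~1 scales make sense.
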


Define $M_{ab}=1$ if $f(P_a) \supset P_b$ and $M_{ab}$ = 0 otherwise.
Then some power $M^N$ of the $|\mathcal{A}| \times |\mathcal{A}|$ matrix $(M_{ab})$ has all its entries positive. 

\begin{remark}
Julia sets are always singleton or perfect sets (Corollary 3.10 in \cite{Mi90}), and in our case, since any point in $J$ always has exactly $d \geq 2$ preimages in $J$, $J$ is always a perfect set. In particular, the condition $\text{int}_J P_a \neq \emptyset$ implies  $\text{diam} (P_a) > 0$ for all $a$. This condition of having $ \geq 2$ preimages for any point in $J$ is what makes the proof of Proposition 2.6 and 2.7 works.
\end{remark}

If $\alpha_0$ is chosen small enough, we may also consider open neighborhoods around the $(P_a)$ that behave well with the dynamics. They will help us do computations with our smooth map $f$.

\begin{proposition}

If $\alpha_0$ is small enough, we can choose a Markov partition $(P_a)_{a \in \mathcal{A}}$ and two families of open sets $(U_a)_{a \in \mathcal{A}}$ and $(D_a)_{a \in \mathcal{A}}$ such that $ P_a \subset U_a \subset \overline{U_a} \subset D_a \subset \overline{D_a} \subset V $, and:

\begin{enumerate}

    \item $\text{diam}(D_a) \leq \alpha_0$
    \item $f$ is injective on $\overline{D_a}$, for all $a \in \mathcal{A}$
    \item $f$ is injective on $D_a \cup D_b$ whenever $D_a \cap D_b \neq \emptyset$

    \item For every $a,b$ such that $f(P_a) \supset P_b$, we have a local inverse $g_{ab} : \overline{D_b} \rightarrow \overline{D_a}$ for $f$. $g_{ab}$ is holomorphic on a neighborhood of $\overline{D_b}$.
    
    \item If $f(P_a) \supset P_b$ for some $a,b \in \mathcal{A}$, then $f(U_a) \supset \overline{U_b}$ and $f(D_a) \supset \overline{D_b}$.
    
    \item $D_a$ is convex. 
    \item For any $a \in \mathcal{A}$, $P_a \nsubseteq \overline{\cup_{b \neq a} U_b} $

\end{enumerate}

\end{proposition}

Usually, only the sets $(U_a)_{a \in \mathcal{A}}$ are considered when dealing with hyperbolic conformal dynamics: they are the open sets introduced in $\cite{OW17}$ and $\cite{Ru89}$, and so are the sets where their papers apply. But they are sometimes not easy to work with, especially because they may not be connected. The $(D_a)_{a \in \mathcal{A}}$ have the advantage to be convex, which will make the computations of section 3 doable. But they have the disadvantage that some $P_b$ may be entirely contained in $D_a$ even if $b \neq a$. 

\begin{proof}
The construction of the sets $(U_a)$ is borrowed from $\cite{Ru89}$, where Ruelle does it for expanding maps. The main problem is that $f$ is not necessarily expanding here, so we will have to introduce a modified metric (sometimes called the Mather metric).  Since $f$ is hyperbolic, we know that there exists some $N \in \mathbb{N}$ such that $f^N$ satisfy $|(f^N)'(x)|>1$ on $J$. So, there exists some small neighborhood $V$ of $J$ where $f^{N} : V \rightarrow \mathbb{C}$ is well defined, and where $ |(f^N)'(x)| \geq \kappa > 1 $. \\

Define $\rho(z) := \sum_{k=0}^{N-1} \kappa^{-k/N} |(f^k)'(z)| $.
Since $f'$ doesn't vanish, it is a smooth and positive function on $V$, and so $ds:= \rho(z) |dz|$ is a well defined conformal metric on $V$. 
Moreover,
$$ \rho(f(z)) |f'(z)| = \sum_{k=0}^{N-1} \kappa^{-k/N} |(f^k)'(f(z))| \ |f'(z)| = \sum_{k=1}^{N} \kappa^{-(k-1)/N} |(f^k)'(z)|  $$
$$ \geq \kappa^{1/N} \sum_{k=0}^{N-1} \kappa^{-k/N} |(f^k)'(z)| = \kappa^{1/N} \rho(z) ,$$
and so $f$ is expanding for the distance $d_\rho$ induced by the conformal metric $\rho(z)|dz|$. In particular, reducing $V$ if necessary, there exists $\alpha > 0$ such that
$$ \forall x,y \in V, d_\rho(x,y) \leq \alpha \Rightarrow d_\rho(f(x),f(y)) \geq \kappa^{1/N} d_\rho(x,y) .$$

In addition, the euclidean distance and the constructed conformal metric are equivalent: there exists a constant $G \geq 1$ such that $$ G^{-1} |x-y| \leq d_\rho(x,y) \leq G|x-y| ,$$ and so the property may become, taking $\alpha$ smaller if necessary:

$$ \forall x,y \in V, |x-y| \leq \alpha \Rightarrow d_\rho(f(x),f(y)) \geq \kappa^{1/N} d_\rho(x,y) .$$

Finally, define $L>1$ a Lipschitz constant of $f$ for the distance $d_\rho$. Now, let $r< G^{-1} \alpha/4$, \ $\alpha_0 < r  \min(G^{-1}/20,L^{-1}(\kappa^{1/N}-\kappa^{1/(2N)})) $, and let $(P_a)_{a \in \mathcal{A}}$ be a Markov partition such that $\text{diam}(P_a) < \alpha_0$.
Define, for $a \in \mathcal{A}$, $$D_a := \text{Conv}\left(D_\rho( x_a , r ) \right) \supset P_a$$ for some fixed $x_a \in \text{int}_J P_a$, where $\text{Conv}$ denotes the euclidean convex hull, and where $D_\rho$ is an open ball for the distance $d_\rho$. The point (1) is satisfied taking $\alpha_0$ smaller if necessary, and the points (2) and (3) follows immediately, provided $\alpha_0$ is small enough, since $f$ is hyperbolic (hence a local biholomorphism). The point (6) follows by the definition of $D_a$. We prove point (5) for $(D_a)_a$.
Since $f$ is a biholomorphism from $D_a$ to $f(D_a)$, and since $\text{diam}_\rho( f(D_a) ) \leq L \alpha_0$, we get that
$$ f(D_a) \supset f\left(D_\rho(x_a, r)\right) \supset D_\rho(f(x_a), \kappa^{1/N} r) \supset  \overline{ D_\rho(x_b, \kappa^{1/(2N)} r) }  $$
for each $b \in \mathcal{A}$ such that $f(P_a) \supset P_b$.\\

We then have to show that $ D_\rho(x_b, \kappa^{1/(2N)} r) \supset{D}_b $.
Let $x,y \in D_\rho(x_b, r)$, and for any $\lambda \in [0,1]$, let $z := \lambda x + (1- \lambda)y \in D_b$. By definition, there exists a path $\gamma_x$ from $x_b$ to $x$ with length $< r$, and a path $\gamma_y$ from $x_b$ to $y$ with length $< r$. Set $\gamma := \lambda \gamma_x + (1- \lambda ) \gamma_y$. It is a well defined path in $D_b$ from $x_b$ to $z$. Its length satisfies

$$ \int_0^1 |\gamma'(t)| \rho(\gamma(t)) | dt \leq \left( \int_0^1 |\gamma'(t)| \rho(x_a) dt  \right) \ e^{ r \| \rho'/\rho \|_{\infty,D_b}  } $$
$$ \leq \left( \lambda \int_0^1 |\gamma_x'(t)| \rho(x_a) dt + (1-\lambda) \int_0^1 |\gamma_y'(t)| \rho(x_a) dt \right)  \ e^{ r \| \rho'/\rho \|_{\infty,D_b}  } $$
$$  \leq \left( \lambda \int_0^1 |\gamma_x'(t)| \rho(\gamma_x(t)) dt + (1-\lambda) \int_0^1 |\gamma_y'(t)| \rho(\gamma_y(t)) dt \right)  \ e^{2 r \| \rho'/\rho \|_{\infty,D_b}  }   $$
$$ \leq  r e^{ 2 r \| \rho'/\rho \|_{\infty,D_b}  } < \kappa^{1/(2N)} r $$
as soon as $ r < \frac{\ln(\kappa)}{4N} \| \rho'/\rho \|_\infty^{-1}  $. Hence (5) is true for $(D_a)_a$. We finished constructing our sets $D_a$. Notice that we can choose $r$ arbitrary small, and so the diameters of the $D_a$ can be chosen as small as we want. The point (4) follows by considering the inverse branches of $f$ induced by (5): they are holomorphic and $\kappa^{1/N}$ contracting for $d_\rho$. \\

The construction of the sets $(U_a)_a$ is easier.
First of all, there exists $\beta>0$ such that $D_\rho(x_a,\beta) \cap P_b = \emptyset $ whenever $a \neq b$. Then, since all of the $P_a$ are compactly contained in $D_a$, there exists a parameter $s< \beta/3$ such that for all $x \in P_a$, $\overline{D_\rho(x,s)} \subset D_a $. Define:

$$ U_a := \{ x \in V, \ d_\rho(x,P_a) < s \} \subset D_a .$$

First,  the fact that $s< \beta/3$ ensures that $P_a \nsubseteq \overline{\cup_{b \neq a} U_b} $, since $x_a \notin \overline{\cup_{b \neq a} U_b}$, hence proving (7). We prove (5). Let $b$ be such that $f(P_a) \supset P_b$. We prove that $f(U_a) \supset U_b$. Let $z \in U_b$. By definition, there exists $z_b \in P_b$ such that $d_\rho(z,z_b)<s$. Notice that $z=f(g_{ab}(z))$: to conclude, it suffice to prove that $g_{ab}(z) \in U_a$ (we only know that it is in $D_a$ at this point). Since $f(P_a) \supset P_b$ and since $f:D_a \rightarrow \mathbb{C}$ is injective, we know that $g_{ab}(z_b) \in P_a$. The fact that $ d_\rho(g_{ab}(z),g_{ab}(z_b)) \leq \kappa^{-1/N} s < s $ allows us to conclude.

 \end{proof}

To study the dynamics, we need to introduce some notations. \\
A finite word $(a_n)_n$ with letters in $\mathcal{A}$ is called admissible if $M_{a_n a_{n+1}=1}$ for every $n$. Then define:

\begin{itemize}
    \item $ \mathcal{W}_n := \{ (a_k)_{k=1, \dots, n} \in \mathcal{A}^n , \ (a_k) \ \text{is admissible} \}  $
    \item For $\textbf{a} = a_1 \dots a_n \in \mathcal{W}_n$, define $g_{\mathbf{a}} := g_{a_1 a_2} g_{a_2 a_3} \dots g_{a_{n-1} a_n} : D_{a_n} \rightarrow D_{a_1} $.
    \item For $\textbf{a} = a_1 \dots a_n \in \mathcal{W}_n$, define $P_{\textbf{a}} := g_{\textbf{a}}(P_{a_n}) \subset P_{a_1}$, $U_{\textbf{a}} := g_{\textbf{a}}( U_{a_n}) \subset U_{a_1}$, and \mbox{$D_{\textbf{a}} := g_{\textbf{a}}( D_{a_n}) \subset D_{a_1}$.}
\end{itemize}

We begin by an easy remark on the diameters of the $D_\mathbf{a}$ and of the behavior of $\varphi$ on those sets.

\begin{remark}
Since our potential is smooth, it is Lipshitz on $\overline{D} := \overline{ \bigcup_a D_a } \subset V$. There exists a constant $C_\varphi>0$ such that:
$$ \forall x,y \in D, \ |\varphi(x) - \varphi(y)| \leq C_\varphi |x-y|.$$

Moreover, we have some estimates on the diameters of the $P_{\textbf{a}}$.
Since $f$ is supposed to be hyperbolic, we have the following estimate for the local inverses:

$$ \forall n \geq 1, \ \forall \textbf{a} \in \mathcal{W}_{n+1}, \ \forall x \in D_{\textbf{a}}, \ \kappa_1^{-n} \leq |g_{\textbf{a}}'(x)| \leq c_0^{-1} \kappa^{-n} .$$

Hence, since each $D_a$ are convex, $$ \text{diam}(P_\textbf{a}) \leq \text{diam}(D_{\textbf{a}}) = \text{diam}(g_{\textbf{a}}(D_{a_{n+1}})) \leq c_0^{-1} \kappa^{-n} $$ decreases exponentially fast. We will say that $\varphi$ has \textbf{exponentially decreasing variations}, as
$$ \max_{ \textbf{a} \in \mathcal{W}_n } \sup_{x,y \in D_{\textbf{a}}} |\varphi(x) - \varphi(y)| \leq C_\varphi c_0^{-1} \kappa^{-n} .$$
\end{remark}

Notice the following technical difficulty: for $a \in \mathcal{A}$, $D_a$ may be convex but for a word $\mathbf{a} \in \mathcal{W}_n$, $D_{\mathbf{a}}$ will eventually be twisted by $g_{\mathbf{a}}$ and not be convex anymore. Fortunately, we still have the following result, which relies heavily on the fact that $f$ is holomorphic:

\begin{lemma}

For all $\mathbf{a} \in \mathcal{W}_n, \ \text{Conv}(P_{\mathbf{a}}) \subset D_{\mathbf{a}} $.

\end{lemma}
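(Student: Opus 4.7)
The plan is to use the Koebe $1/4$ theorem together with the Koebe distortion theorem, exploiting that the partition parameter $\alpha_0$ in Proposition 2.2 was taken much smaller than the ``inner radius'' $r$ of the convex sets $D_a$. Since the convex hull of a bounded set has the same diameter as the set itself, it is enough to fix any point $y_0 \in P_{\mathbf{a}}$ and show that $D_{\mathbf{a}}$ contains the Euclidean disk around $y_0$ of radius $\mathrm{diam}(P_{\mathbf{a}})$.

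By the construction carried out in the proof of Proposition 2.2, there is a constant $\delta_0 > 0$, independent of $n$ and of $\mathbf{a}$, such that $D(w, \delta_0) \subset D_{a_n}$ for every $w \in P_{a_n}$; concretely one can take $\delta_0 \sim r/G$, and since $\alpha_0 < rG^{-1}/20$ one obtains $\delta_0 \geq 10\,\alpha_0$. Setting $w_0 := f^{n-1}(y_0) \in P_{a_n}$, the local inverse $g_{\mathbf{a}}$ is univalent on $D(w_0, \delta_0) \subset D_{a_n}$, so the Koebe $1/4$ theorem yields
$$ D_{\mathbf{a}} \;\supset\; g_{\mathbf{a}}\bigl(D(w_0,\delta_0)\bigr) \;\supset\; D\bigl(y_0,\,|g_{\mathbf{a}}'(w_0)|\,\delta_0/4\bigr). $$

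It then remains to compare this radius with $\mathrm{diam}(P_{\mathbf{a}})$. The Koebe distortion theorem applied to $g_{\mathbf{a}}$ on $D(w_0, \delta_0)$ furnishes a constant $C = C(\alpha_0/\delta_0)$, tending to $1$ as $\alpha_0/\delta_0 \to 0$, satisfying $|g_{\mathbf{a}}'(w)| \leq C\,|g_{\mathbf{a}}'(w_0)|$ for every $w \in D(w_0, \alpha_0) \supset P_{a_n}$. Integrating along segments in the convex set $D_{a_n}$ gives $\mathrm{diam}(P_{\mathbf{a}}) \leq C\,\alpha_0\,|g_{\mathbf{a}}'(w_0)|$, so the disk displayed above has radius at least $\bigl(\delta_0/(4C\alpha_0)\bigr)\,\mathrm{diam}(P_{\mathbf{a}})$. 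The only real obstacle is the purely numerical verification that $\delta_0/(4C\alpha_0) \geq 1$; this is guaranteed because $\alpha_0/\delta_0 \leq 1/10$ keeps $C$ close to $1$, and in any case the inequality is an open condition on $\alpha_0/r$, so one may shrink $\alpha_0$ at the outset if needed. Combining this lower bound with the reduction of the first paragraph yields $\mathrm{Conv}(P_{\mathbf{a}}) \subset D(y_0, \mathrm{diam}(P_{\mathbf{a}})) \subset D_{\mathbf{a}}$.
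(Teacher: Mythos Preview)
Your proof is correct and follows essentially the same route as the paper: both arguments combine the Koebe $1/4$ theorem with the Koebe distortion theorem to show that $P_{\mathbf{a}}$ is contained in a Euclidean disk which in turn sits inside $D_{\mathbf{a}}$, and then use the convexity of that disk to capture $\mathrm{Conv}(P_{\mathbf{a}})$. The paper packages the two Koebe estimates into an auxiliary fact about segments $[g(z_0),g(z)]$ staying in the image, whereas you phrase it via the reduction to $D(y_0,\mathrm{diam}\,P_{\mathbf{a}})\subset D_{\mathbf{a}}$, but the underlying mechanism and the numerics (driven by $\alpha_0 < rG^{-1}/20$) are the same.
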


\begin{proof}

We will need to recall some results on univalent holomorphic functions $g:\mathbb{D} \rightarrow \mathbb{C}$. First of all, we have the Koebe quarter theorem, which states that if $g$ is such a map, then $g(\mathbb{D}) \supset B\left(g(0),\frac{|g'(0)|}{4} \right)$ ($\mathbb{D}$ is the unit disk, and $B(\cdot,\cdot)$ denotes an open euclidean ball). Secondly, the Koebe distortion theorem states that in this case, we also have that $ |g(z)-g(0)| \leq |g'(0)| \frac{|z|}{\left(1-|z|\right)^2} $. Combining those two results gives us the following fact:
$$ (*) \ \text{For any injective holomorphic } g:B(z_0,r) \rightarrow \mathbb{C} \text{, if } |z-z_0| \leq r/10, \text{ then } [g(z_0),g(z)] \subset g(B(z_0,r)). $$
Now recall the construction of $D_a$: we have $P_a \subset D_a$ with $\text{diam}(P_a) \leq \alpha_0 < r G^{-1}/20$, and $D_a = \text{Conv}(D_\rho(x_a,r))$. 
Since the distances are equivalent with associated constant $G$, we can write that, for any $x \in P_a$:
$$ P_a \subset B(x, r G^{-1}/20) \subset B(x, r G^{-1}/2) \subset B(x_a, r G^{-1}) \subset D_a. $$

Hence we can apply the fact $(*)$ to the map ${g_{\mathbf{a}}}_{| B(x, r G^{-1}/2)}$: every $y \in P_a$ is in $ B(x, r G^{-1}/20) $, and so $ [x,y] \subset D_\mathbf{a} $. We have proved that $\text{Conv}(P_\mathbf{a}) \subset D_{\mathbf{a}}$.
\end{proof}

We end this topological part with some final remarks, extracted from $\cite{OW17}$. The following \say{partition result} is true:

    $$ J = \bigcup_{\textbf{a} \in \mathcal{W}_n} P_{\textbf{a}} \text{ , and }\text{int}_J P_{\textbf{a}} \cap \text{int}_J P_{\textbf{b}} = \emptyset \text{ if }\textbf{a} \neq \textbf{b} \in \mathcal{W}_n. $$
    
This allows us to see that $\bigcup_{a \in \mathcal{A}} \partial P_a$ is a closed $f$-invariant subset of $J$. Since its complementary is open and nonempty, and since $\mu_\varphi$ has full support, the ergodicity of $\mu_\varphi$ implies that $\mu_\varphi \left(\bigcup_{a \in \mathcal{A}}  \partial P_a \right) = 0$. \\

In particular, it implies that, for any $n \geq 1$, we have the relation

$$ \forall f \in C^0(J,\mathbb{C}), \ \int_J f d\mu_\varphi = \sum_{\textbf{a} \in \mathcal{W}_n} \int_{P_{\textbf{a}}} f \ d\mu_\varphi ,$$

which will be useful later.

\subsection{Transfer operators}

Let $\varphi \in C^1(V,\mathbb{R})$ be a smooth potential.
Let $U := \bigcup_{a \in \mathcal{A}} U_a$, and notice that $\overline{f^{-1}(U)} \subset U$. \\
We define the associated transfer operator $\mathcal{L}_\varphi : C^1(U) \rightarrow C^1(U) $ by $$ \mathcal{L}_\varphi h(x) := \sum_{y , f(y)=x} e^{\varphi(y)} h(y) .$$
Notice that, if $x \in U_a$, then 
$$ \mathcal{L}_\varphi h(x) = \sum_{b, M_{ba}=1} e^{\varphi( g_{ba}(x) )} h(g_{ba}(x)) .$$
We have the following formula for the iterates:
$$ \mathcal{L}_\varphi^n h(x) = \sum_{f^n(y)=x} e^{S_n \varphi(y)} h(y) ,$$
where $S_n \varphi := \sum_{k=0}^{n-1} \varphi \circ f^k$ is a Birkhoff sum. This can be rewritten, if $x \in U_{b}$, in the following form:
$$ \mathcal{L}_\varphi^n h(x) = \underset{a_{n+1}=b}{\sum_{ \textbf{a} \in \mathcal{W}_{n+1} } } e^{S_n \varphi( g_{\textbf{a}}(x) )} h( g_{\textbf{a}}(x) ) .$$
Finally, note that our transfer operator also acts on the set of probability measures on $J$, by duality, in the following way:
$$ \forall h \in C^0(J,\mathbb{C}), \ \int_J h \ d \mathcal{L}_\varphi^* \nu := \int_J  \mathcal{L}_\varphi h \ d\nu.$$
Transfer operators satisfy the following theorem, extracted from \cite{Ru89}, Theorem 3.6 :
\begin{theorem}[Perron-Frobenius-Ruelle] 

With our choice of open set $U \supset J$, and for any \underline{real} \\ potential $\varphi \in C^1(U,\mathbb{R})$:

\begin{itemize}
    \item the spectral radius of $\mathcal{L}_\varphi$, acting on $C^1(U,\mathbb{C})$, is equal to $e^{P(\varphi)}$.
    \item there exists a unique probability measure $\nu_\varphi$ on $J$ such that $\mathcal{L}_\varphi^* \nu_\varphi = e^{P(\varphi)} \nu_\varphi $.
    \item there exists a unique map $h \in C^1(U,\mathbb{C})$ such that $\mathcal{L}_\varphi h = e^{P(\varphi)} h$ and $\int h d\nu_\varphi =1$. \\ Moreover, $h$ is positive.
    \item The product $h \nu_\varphi$ is equal to the equilibrium measure $\mu_\varphi$.

\end{itemize}

\end{theorem}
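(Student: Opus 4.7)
The plan is to follow Ruelle's classical strategy for transfer operators of expanding dynamics, the main technical input being a bounded distortion estimate on Birkhoff sums. Concretely, combining Remark 2.4 with the contraction $|g'_{\mathbf{a}}| \leq c_0^{-1}\kappa^{-n}$ and the $C^1$ character of $\varphi$ yields
\[
\sup_{n \geq 1} \ \sup_{\mathbf{a} \in \mathcal{W}_{n+1}} \ \sup_{x,y \in D_{a_{n+1}}} \bigl|S_n\varphi \circ g_{\mathbf{a}}(x) - S_n\varphi \circ g_{\mathbf{a}}(y)\bigr| < \infty,
\]
a Bowen-type bound from which every subsequent estimate is extracted.

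For the eigenmeasure $\nu_\varphi$, I would apply the Schauder--Tychonoff theorem to the continuous self-map $\Phi(\nu) := \mathcal{L}_\varphi^* \nu \big/ \int \mathcal{L}_\varphi 1\, d\nu$ on the weak-$*$ compact convex set $\mathcal{P}(J)$. This produces a fixed point $\nu_\varphi$ with eigenvalue $\lambda := \int \mathcal{L}_\varphi 1 \, d\nu_\varphi > 0$; bounded distortion combined with the mixing property $M^N > 0$ then yields the two-sided Gibbs estimate $\nu_\varphi(P_{\mathbf{a}}) \asymp \lambda^{-n} \exp S_n\varphi(x_{\mathbf{a}})$, which simultaneously gives uniqueness and full support.

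For the eigenfunction $h$, I would form the Cesàro averages $h_n := \tfrac{1}{n}\sum_{k=0}^{n-1} \lambda^{-k} \mathcal{L}_\varphi^k 1$. The same distortion estimate furnishes uniform $C^1$ bounds on $\lambda^{-k}\mathcal{L}_\varphi^k 1$ together with a positive lower bound on $J$ via mixing, so Arzelà--Ascoli extracts a $C^0$-limit $h$ satisfying $\mathcal{L}_\varphi h = \lambda h$ and $h>0$; the eigenvalue equation then bootstraps regularity back to $C^1$. Uniqueness is a standard Hilbert projective-metric contraction argument on the cone of positive functions, and any continuous eigenfunction of $\lambda^{-1}\mathcal{L}_\varphi$ for an eigenvalue of modulus $1$ would produce a non-constant $f$-invariant continuous function on $J$, contradicting topological mixing; this pins the spectral radius at $\lambda$.

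It remains to identify $\lambda$ with $e^{P(\varphi)}$ and $\mu := h\nu_\varphi$ with the equilibrium state. Invariance of $\mu$ is the direct duality
\[
\int (u \circ f)\, d\mu = \lambda^{-1} \int \mathcal{L}_\varphi\bigl((u\circ f)\, h\bigr)\, d\nu_\varphi = \lambda^{-1} \int u \cdot \mathcal{L}_\varphi h\, d\nu_\varphi = \int u\, d\mu,
\]
and a Shannon--McMillan--Breiman computation on the Markov partition using the Gibbs estimate produces $h_f(\mu) + \int \varphi\, d\mu = \log \lambda$, which combined with the variational principle (Definition 2.1) forces $\log \lambda = P(\varphi)$ and $\mu = \mu_\varphi$ by uniqueness of the maximizer. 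The hardest step, in my view, is securing the sharp bounded-distortion constants that feed the Gibbs estimate on the sets $P_{\mathbf{a}}$: since $P_{\mathbf{a}}$ need not be convex (the warning after Lemma 2.2), one must in practice transfer estimates through $\mathrm{Conv}(P_{\mathbf{a}}) \subset D_{\mathbf{a}}$ via Lemma 2.3 and invoke Koebe-type control to bring them back to $P_{\mathbf{a}}$.
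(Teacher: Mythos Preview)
The paper does not actually prove this theorem: it is quoted verbatim from \cite{Ru89}, Theorem~3.6, with no argument given. So there is no ``paper's own proof'' to compare against beyond the reference itself.

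Your sketch is a faithful outline of the classical Ruelle argument that one finds in \cite{Ru89} (and, in the symbolic setting, in \cite{Ru78} or \cite{PP90}): Schauder--Tychonoff for the eigenmeasure, Ces\`aro averages plus Arzel\`a--Ascoli for the eigenfunction, bounded distortion feeding the Gibbs property, and the variational principle to pin down $\lambda = e^{P(\varphi)}$. This is essentially the route the cited reference takes, so in that sense your proposal matches the intended proof.

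One small overcomplication: your final paragraph flags the non-convexity of $P_{\mathbf{a}}$ and the need for Koebe control as the ``hardest step'' in securing the Gibbs estimate. In fact, for the Perron--Frobenius--Ruelle package you do not need this at all. Bounded distortion of $S_n\varphi \circ g_{\mathbf{a}}$ on $D_{a_{n+1}}$ (which is what Remark~2.1 provides, using only convexity of $D_{a_{n+1}}$ and the Lipschitz bound on $\varphi$) already gives $\nu_\varphi(P_{\mathbf{a}}) \asymp \lambda^{-n} e^{S_n\varphi(x_{\mathbf{a}})}$ directly, since $P_{\mathbf{a}} \subset D_{\mathbf{a}}$ and the estimate is pointwise. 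The Koebe machinery (Lemmas~2.3--2.4) is invoked later in the paper for the linearization and diameter estimates of Sections~3--4, not for the thermodynamic formalism itself.
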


The Perron-Frobenius-Ruelle theorem allows us to link $\mu_\varphi$ to $\nu_\varphi$, and this will allow us to prove some useful estimates, called the Gibbs estimates.

\begin{proposition}[Gibbs estimate, \cite{PP90}]

$$\exists C_0 \geq 1, \ \forall \textbf{a} \in \mathcal{W}_n, \ \forall x_\textbf{a} \in P_{\textbf{a}}, \ C_0^{-1} e^{S_n \varphi(x_{\textbf{a}}) - n P(\varphi)} \leq \mu_\varphi( P_{\textbf{a}} ) \leq C_0 e^{S_n\varphi(x_{\textbf{a}}) - n P(\varphi)} .$$

\end{proposition}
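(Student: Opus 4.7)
The plan is to deduce the Gibbs estimate from the Perron--Frobenius--Ruelle theorem together with a bounded distortion argument. Since $\mu_\varphi = h\nu_\varphi$ with $h$ continuous and strictly positive on the compact set $J$, there exist constants $0<h_{\min}\leq h_{\max}<\infty$ with $h_{\min}\leq h\leq h_{\max}$ on $J$, so that $\mu_\varphi(P_\mathbf{a})$ and $\nu_\varphi(P_\mathbf{a})$ differ by at most a universal multiplicative constant. It therefore suffices to prove the two-sided estimate with $\nu_\varphi$ in place of $\mu_\varphi$.

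Fix $\mathbf{a}=a_1\dots a_n\in\mathcal{W}_n$ and any reference point $x_\mathbf{a}\in P_\mathbf{a}$. Iterating the eigenmeasure identity $\mathcal{L}_\varphi^*\nu_\varphi = e^{P(\varphi)}\nu_\varphi$ yields
$$\nu_\varphi(P_\mathbf{a}) = e^{-(n-1)P(\varphi)}\int_J \mathcal{L}_\varphi^{n-1}\mathbb{1}_{P_\mathbf{a}}\,d\nu_\varphi,$$
where the duality is extended from $C^1$ to bounded Borel test functions by a standard density argument (legitimate because $\nu_\varphi(\partial P_\mathbf{a})=0$, itself a consequence of the ergodicity argument already given in the text). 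Since $f^{n-1}$ is injective on $D_\mathbf{a}\supset P_\mathbf{a}$ with inverse $g_\mathbf{a}$, and since $P_\mathbf{a}=g_\mathbf{a}(P_{a_n})$, the only preimage of $x\in J$ lying in $P_\mathbf{a}$ is $g_\mathbf{a}(x)$, and this exists only when $x\in P_{a_n}$. Thus
$$\mathcal{L}_\varphi^{n-1}\mathbb{1}_{P_\mathbf{a}}(x) = \mathbb{1}_{P_{a_n}}(x)\,e^{S_{n-1}\varphi(g_\mathbf{a}(x))}.$$

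The main step, and the only nontrivial one, is a bounded distortion estimate for $S_{n-1}\varphi$ along $P_\mathbf{a}$: for all $y,y'\in P_\mathbf{a}$ one has $f^k(y),f^k(y')\in P_{a_{k+1}\dots a_n}$, whose diameter decays like $\kappa^{-(n-k-1)}$ by the exponentially decreasing variations recorded in Remark~2.1, so the Lipschitz bound on $\varphi$ gives
$$|S_{n-1}\varphi(y)-S_{n-1}\varphi(y')|\leq C_\varphi c_0^{-1}\alpha_0\sum_{k=0}^{n-2}\kappa^{-(n-k-1)}\leq \frac{C_\varphi c_0^{-1}\alpha_0}{\kappa-1}=:K_0,$$
uniformly in $n$ and $\mathbf{a}$. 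Consequently
$$e^{-K_0}e^{S_{n-1}\varphi(x_\mathbf{a})}\nu_\varphi(P_{a_n}) \leq \int_J \mathcal{L}_\varphi^{n-1}\mathbb{1}_{P_\mathbf{a}}\,d\nu_\varphi \leq e^{K_0}e^{S_{n-1}\varphi(x_\mathbf{a})}\nu_\varphi(P_{a_n}).$$

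To conclude, note that $\nu_\varphi(P_{a_n})\in[\nu_{\min},1]$ with $\nu_{\min}:=\min_{a\in\mathcal{A}}\nu_\varphi(P_a)>0$ (finite alphabet, full support of $\nu_\varphi$). Replacing $S_{n-1}\varphi(x_\mathbf{a})$ by $S_n\varphi(x_\mathbf{a})$ costs only a factor pinched between $e^{\pm\|\varphi\|_{\infty,J}}$, and $e^{-(n-1)P(\varphi)}$ differs from $e^{-nP(\varphi)}$ by the constant $e^{P(\varphi)}$; absorbing all of these multiplicative constants (together with the $h_{\min},h_{\max}$ from the first paragraph) into a single $C_0\geq 1$ yields the claim. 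The only real obstacle is the bounded distortion estimate above, which relies crucially on the $C^1$-regularity of $\varphi$ and the uniform exponential contraction of the inverse branches $g_\mathbf{a}$; everything else is routine manipulation within the Perron--Frobenius formalism.
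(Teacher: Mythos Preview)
Your proof is correct and uses essentially the same ingredients as the paper: reduction from $\mu_\varphi$ to $\nu_\varphi$ via the positive eigenfunction $h$, the eigenmeasure identity $\mathcal{L}_\varphi^*\nu_\varphi=e^{P(\varphi)}\nu_\varphi$, and the bounded distortion coming from exponentially decreasing variations. The only organizational difference is that the paper applies a single step of $\mathcal{L}_\varphi$ to obtain the ratio estimate
\[
e^{-\varphi(x_\mathbf{a})-C\kappa^{-n}} \leq \frac{\nu_\varphi(P_{a_2\dots a_n})}{\nu_\varphi(P_{a_1\dots a_n})}\,e^{-P(\varphi)} \leq e^{-\varphi(x_\mathbf{a})+C\kappa^{-n}}
\]
and then telescopes, whereas you apply $\mathcal{L}_\varphi^{n-1}$ in one shot and invoke the summed distortion bound directly; the resulting constant $C_0=e^{C/(\kappa-1)}$ is the same either way.
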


\begin{proof}

It is enough to prove the estimate for $\nu_\varphi$ since $h$ is continuous on the compact $J$, and since $h \nu_\varphi = \mu_\varphi$. We have
$$ \int_J e^{- \varphi} \mathbb{1}_{P_{a_1 \dots a_n}} d\nu_\varphi = e^{-P(\varphi)} \int_J \mathcal{L}_\varphi\left( e^{- \varphi} \mathbb{1}_{P_{a_1 \dots a_n}} \right) d\nu_\varphi  = e^{-P(\varphi)} \int_{J} \mathbb{1}_{P_{a_2 \dots a_n}} d\nu_\varphi = e^{-P(\varphi)} \nu_\varphi(P_{a_2 \dots a_n}). $$
Moreover, since $\varphi$ has exponentially decreasing variations, we can write that
$$\forall x_{\textbf{a}} \in P_{a_1 \dots a_n}, \ e^{-\varphi(x_{\textbf{a}}) - C \kappa^{-n}} \nu_\varphi(P_{a_1 \dots a_n}) \leq \int_J e^{- \varphi} \mathbb{1}_{P_{a_1 \dots a_n}} d\nu_\varphi \leq  e^{-\varphi(x_{\textbf{a}}) + C \kappa^{-n}} \nu_\varphi(P_{a_1 \dots a_n})  ,$$
and so $$ \forall x_{\textbf{a}} \in P_{a_1 \dots a_n}, \ e^{-\varphi(x_{\textbf{a}}) - C \kappa^{-n}}  \leq \frac{\nu_\varphi(P_{a_2 \dots a_n})}{\nu_\varphi(P_{a_1 \dots a_n}) } e^{-P(\varphi)} \leq  e^{-\varphi(x_{\textbf{a}}) + C \kappa^{-n}} . $$
Multiplying those inequalities gives us the desired relation, with $C_0 := e^{ C/(\kappa-1) }$. \end{proof}

It will be useful, in our future computations, to get rid of the pressure term in our exponential: in the case where $P(\varphi)=0$, we see that $\mu_\varphi(P_\textbf{a})  \simeq e^{S_n \varphi(x_{\textbf{a}})}$ for $x_{\textbf{a}} \in P_{\textbf{a}}$.  

\begin{proposition}

Let $\psi \in C^1(U)$, and let $\mu_\psi$ be its associated equilibrium state.
There exists $\varphi \in C^1(U)$ such that $\mu_\varphi = \mu_{\psi}$, and that is normalized. \\ That is: $P(\varphi) =0$, $ \varphi < 0 \text{ on } J$, $ \mathcal{L}_\varphi 1 = 1 $ and $ \mathcal{L}_\varphi^* \mu_\varphi = \mu_\varphi $.  
\end{proposition}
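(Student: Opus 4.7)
The plan is the standard normalization trick in thermodynamic formalism: produce $\varphi$ by adding to $\psi$ a suitable coboundary and constant built from the Perron--Frobenius eigendata.

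First I would apply the Perron--Frobenius--Ruelle theorem (Theorem 2.3) to the given potential $\psi$ to obtain a strictly positive $h \in C^1(U,\mathbb{R})$ with $\mathcal{L}_\psi h = e^{P(\psi)} h$ and a probability measure $\nu_\psi$ with $\mathcal{L}_\psi^* \nu_\psi = e^{P(\psi)} \nu_\psi$, so that $\mu_\psi = h \nu_\psi$. Then I would define
\[
\varphi(x) := \psi(x) + \log h(x) - \log h(f(x)) - P(\psi),
\]
which is $C^1$ on a neighborhood of $J$ since $h>0$. A direct computation using the definition of $\mathcal{L}_\varphi$ gives $\mathcal{L}_\varphi 1 (x) = e^{-P(\psi)} h(x)^{-1} \mathcal{L}_\psi h (x) = 1$. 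Since $\mathcal{L}_\varphi$ admits $1$ as a positive eigenfunction with eigenvalue $1$, the uniqueness part of Theorem 2.3 forces the spectral radius, and hence $e^{P(\varphi)}$, to equal $1$, i.e.\ $P(\varphi) = 0$.

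Next I would verify the duality relation $\mathcal{L}_\varphi^* \mu_\psi = \mu_\psi$. The key algebraic identity is
\[
h(x)\, \mathcal{L}_\varphi g (x) = e^{-P(\psi)} \mathcal{L}_\psi(h g)(x),
\]
obtained by telescoping the logarithms. Integrating against $\nu_\psi$ and using $\mathcal{L}_\psi^* \nu_\psi = e^{P(\psi)} \nu_\psi$ yields $\int g \, d\mathcal{L}_\varphi^*(h \nu_\psi) = \int g \, d(h\nu_\psi)$, i.e.\ $\mathcal{L}_\varphi^* \mu_\psi = \mu_\psi$. Combined with $\mathcal{L}_\varphi 1 = 1$, the uniqueness clause of the Perron--Frobenius--Ruelle theorem applied now to $\varphi$ identifies the equilibrium state as $\mu_\varphi = 1 \cdot \mu_\psi = \mu_\psi$, so in particular $\mathcal{L}_\varphi^* \mu_\varphi = \mu_\varphi$.

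The only point requiring slight care is the strict negativity $\varphi < 0$ on $J$. Fix $x \in J$; because $f^{-1}(J) = J$ and $f$ is hyperbolic (so the critical set does not meet $J$), the point $x$ has exactly $d \geq 2$ distinct preimages in $J$. The relation $\mathcal{L}_\varphi 1 (x) = 1$ then reads $\sum_{f(y)=x} e^{\varphi(y)} = 1$ with at least two strictly positive summands, which forces each summand $e^{\varphi(y)}$ to be strictly less than $1$, hence $\varphi(y) < 0$. Since every point of $J$ arises as such a $y$, this gives $\varphi < 0$ on $J$. This last step is the most delicate one, but it reduces to the standard fact that hyperbolicity keeps critical points away from $J$.
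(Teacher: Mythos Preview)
Your proof is correct and follows exactly the same route as the paper: define $\varphi := \psi + \log h - \log(h\circ f) - P(\psi)$ from the Perron--Frobenius eigendata and check the required properties. The paper merely states that this verification is ``a simple exercise,'' whereas you carry it out in full; in particular your argument for $\varphi<0$ on $J$ (using that points of $J$ are never critical values under hyperbolicity, so each $x\in J$ has $d\ge 2$ distinct preimages and $\sum e^{\varphi(y)}=1$ forces each term below $1$) is the natural way to justify the claim the paper leaves implicit.
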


\begin{proof}

The Perron-Frobenius-Ruelle theorem tells us that there exists a $C^1$ map $h>0$ and a probability measure $\nu_\psi$ such that $\mathcal{L}_\psi^* \nu_\psi = e^{P(\psi)} \nu_{\psi} $, $\mathcal{L}_\psi h = e^{P(\psi)} h$ and $\int h d\nu_{\psi} = 1$. \\
It is then a simple exercise to check that $\varphi := \psi - \log( h \circ f ) + \log(h) - P(\psi)$ defines a normalized potential, and that its equilibrium measure $\mu_\varphi$ is equal to $\mu_\psi$. \end{proof}

This theorem has the following consequence: we can always suppose that our equilibrium measure comes from a \emph{normalized} potential, by eventually choosing another smaller Markov partition afterwards. It gives us for free the invariance under some transfer operator, which completes the already fine properties of $f$-invariance and ergodicity. It also allows us to prove a useful regularity property.

\begin{proposition}

The equilibrium measure $\mu_\varphi$ is upper regular. More precisely, there exists $C,\delta_{AD}>0$ such that:
$$ \forall x \in \mathbb{C}, \ \forall r>0, \ \mu_\varphi(B(x,r)) \leq C  r^{\delta_{AD}}.$$

\end{proposition}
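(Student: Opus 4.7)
The plan is to first apply Proposition 2.7 to assume that $\varphi$ is normalized: $P(\varphi)=0$, $\mathcal{L}_\varphi 1 = 1$, and $\varphi<0$ on $J$. For any $z \in J$, the identity $\sum_{f(y)=f(z)} e^{\varphi(y)} = 1$ has exactly $d \geq 2$ positive terms (since critical values of $f$ lie in the Fatou set and $f^{-1}(J)=J$), forcing $e^{\varphi(z)} < 1$; by compactness there exists $c_1 > 0$ with $\varphi \leq -c_1$ on $J$. The core input is then the per-piece bound $\mu_\varphi(P_\mathbf{a}) \leq C\, \text{diam}(P_\mathbf{a})^{\delta}$ with $\delta := c_1/\log\kappa_1 > 0$: for $\mathbf{a} \in \mathcal{W}_{n+1}$ and $x_\mathbf{a} \in P_\mathbf{a}$, the Gibbs estimate of Proposition 2.6 combined with normalization gives $\mu_\varphi(P_\mathbf{a}) \leq C_0 e^{-nc_1}$; the hyperbolic bound $|g_\mathbf{a}'(x_\mathbf{a})| \geq \kappa_1^{-n}$ rewrites this as $\mu_\varphi(P_\mathbf{a}) \leq C_0 |g_\mathbf{a}'(x_\mathbf{a})|^{\delta}$, and Koebe distortion (applied to $g_\mathbf{a}$ on a neighborhood of $P_{a_{n+1}} \subset\subset D_{a_{n+1}}$) converts the derivative into a uniform multiple of $\text{diam}(P_\mathbf{a})$.

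To pass from pieces to balls I would use a stopping-time partition: for $\mu_\varphi$-a.e.~$y \in J$, let $n(y)$ be the smallest $n$ such that the unique word $\mathbf{a} \in \mathcal{W}_{n+1}$ with $y \in P_\mathbf{a}$ satisfies $\text{diam}(P_\mathbf{a}) \leq r$. Since $|f'|$ is bounded above and below on a neighborhood of $J$ (critical points lie in the Fatou set), Koebe distortion ensures that consecutive diameters differ by a bounded factor, so every stopping-time piece has diameter in $(\kappa_1^{-1} r,\, r]$ and the collection forms an essentially disjoint partition of $J$. Writing $N(x,r)$ for the number of such pieces meeting $B(x,r)$, which are all contained in $B(x,2r)$, the per-piece bound yields
$$\mu_\varphi(B(x,r)) \;\leq\; \sum_{P_\mathbf{a} \cap B(x,r) \neq \emptyset} \mu_\varphi(P_\mathbf{a}) \;\leq\; N(x,r)\, C r^{\delta}.$$

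The hard part is controlling $N(x,r)$, since the stopping times vary across pieces and the bounded-multiplicity property of the $U_\mathbf{a}$'s applies only at a fixed word-length. I would resolve this by invoking the Ahlfors regularity of $J$ (a classical result of Sullivan for hyperbolic rational maps): $\mathcal{H}^{\delta_J}(J \cap B(x,2r)) \leq C' r^{\delta_J}$. Because the conformal measure $\nu := \mu_{-\delta_J \tau}$ is equivalent to $\mathcal{H}^{\delta_J}|_J$, the two-sided Gibbs estimate for $\nu$ together with Koebe distortion forces $\mathcal{H}^{\delta_J}(P_\mathbf{a}) \simeq r^{\delta_J}$ for every stopping-time piece. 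Essential disjointness in $J$ then gives
$$N(x,r) \cdot c\, r^{\delta_J} \;\leq\; \mathcal{H}^{\delta_J}(J \cap B(x,2r)) \;\leq\; C' r^{\delta_J},$$
so $N(x,r)$ is bounded by a universal constant. Combining with the previous display proves the claim with $\delta_{AD} = c_1/\log\kappa_1$.
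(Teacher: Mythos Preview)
Your argument is correct, but you take a longer and less self-contained route than the paper. You introduce a variable-level stopping-time partition, which as you note creates the problem of bounding $N(x,r)$ across different word-lengths; you then resolve this by invoking Sullivan's Ahlfors regularity of $J$ together with two-sided Gibbs bounds for the conformal measure as an external input. The paper avoids all of this by working at a \emph{single} level: it picks $n$ so that $s_0\kappa_1^{-(n+1)} \le 2r \le s_0\kappa_1^{-n}$ and then uses Lemma~2.4 (the Koebe-quarter inner-radius bound $B(\tilde x,\,|g_{\mathbf a}'|\,s_0)\subset U_{\mathbf a}$) to conclude that the ball $B(\tilde x,2r)$ is contained in a single $U_{\mathbf a}$ with $\mathbf a\in\mathcal W_{n+1}$. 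At that fixed level the bounded-multiplicity fact ``$U_{\mathbf a}\cap U_{\mathbf b}\neq\emptyset$ for at most $|\mathcal A|$ words $\mathbf b$'' immediately gives $N\le|\mathcal A|$, and the Gibbs bound $\mu_\varphi(P_{\mathbf b})\le C_0 e^{-n|\sup_J\varphi|}$ finishes. In short: the paper's key observation is that choosing $n$ via the \emph{worst} contraction rate $\kappa_1$ makes the ball small enough to sit inside one cylinder neighbourhood, so no stopping-time or Ahlfors-regularity machinery is needed. Your approach is valid and illustrates a general technique, but it imports a result (upper Ahlfors regularity of $J$) that is essentially of the same strength as what you are proving.
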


\begin{proof}

First of all, since $\mu_\varphi$ is a probability measure, we may only prove this estimate for $r$ small enough. Then, we know that $\mu_\varphi$ is supported in $J$, and so we just have to verify the estimate if $B(x_0,r) \cap J \neq \emptyset$. Without loss of generality, we can suppose that $x_0 \in J$. \\

The main idea is to cover $B(x_0,r) \cap J$ by some $P_{\mathbf{b}}$, but estimating the number of such $P_{\mathbf{b}}$ that are needed to do so is difficult. To bypass this difficulty, we use the notion of Moran cover. For any $x \in J$, define $n(x,r)$ as the only integer such that 

$$ |(f^{n(x,r)-1})'(x)|^{-1} \geq r \quad \text{and} \quad |(f^{n(x,r)})'(x)|^{-1} < r.$$

We get from the hyperbolicity condition $|(f^{n})'| \geq c_0 \kappa^n$ the following bound: $$ \forall x, \ -n(x,r) \leq \ln\left( 2 r c_0^{-1} \right)/\ln \kappa. $$

For any $x \in J \setminus \bigcup_{n \geq 0} f^{-n} \left( \bigcup_{a \in \mathcal{A}} \partial P_a \right)$ and for any $n$, there exists a unique $\mathbf{a} \in \mathcal{W}_n$ such that $x \in P_{\mathbf{a}}$. We denote it $P_n(x)$. Notice that $x \in P_{n(x,r)}(x)$. If $y \in P_{n(x,r)}(x)$ and $n(y,r) \leq n(x,r)$ then $ P_{n(x,r)}(x) \subset P_{n(x,r)}(y) $. Let $P(x)$ be the largest cylinder containing $x$ of the form $P_{n(y,r)}(x)$ for some $y \in P(x)$ and satisfying $P_{n(z,r)}(x) \subset P(x)$ for any $z \in P(x)$. The sets $(P(x))_{x \in J}$ are equal or disjoint (mod the boundary), and hence produce a cover of $J$ called a \emph{Moran cover} . Denote this Moran cover $\mathcal{P}_r$. An important property of this cover is the following: there exists a constant $M$ \emph{independent of $x_0$ and $r$} such that we can cover the ball $B(x_0,r)$ by $M$ elements of $\mathcal{P}_r$. Moreover, every element of the Moran cover have diameter strictly less than $r$. See \cite{PW97} page 243, \cite{Pe98} section 20, or \cite{WW17}. The proof uses the conformality of the dynamics. \\

We can then conclude our proof. The following holds:

$$ \mu_\varphi(B(x_0,r)) \leq \underset{B(x_0,r) \cap P \neq \emptyset}{\sum_{P \in \mathcal{P}_r} } \mu_\varphi(P) . $$

By the Gibbs estimate, since each $P \in \mathcal{P}_r$ is of the form $P_{\mathbf{b}}$ for some $\mathbf{b} \in \mathcal{W}_{n(x,r)}$, $x \in J$, and by the bound on $n(x,r)$, we get:
$$ \forall P \in \mathcal{P}_r, \ \mu_\varphi(P) \leq C_0 e^{-n(x,r) |\sup_J \varphi|} \leq C r^{\delta_{AD}}  $$
for some $C, \delta_{AD} >0$.
Hence, since $B(x_0,r) \cap P$ occurs at most $M$ times, we get our desired bound $$ \mu_\varphi(B(x_0,r)) \leq M C r^{\delta_{AD}} .$$ \end{proof}

\newpage

\subsection{Large deviation estimates }

From the ergodicity of $\mu_\varphi$, it is natural to ask if a large deviation theorem holds for the Birkhoff sums of potentials. The following theorem is true, we detail its proof in the annex A.

\begin{theorem}

Let $\mu_\varphi$ be the equilibrium measure associated to a normalized $C^1(U,\mathbb{R})$ potential. Let $\psi \in C^1(U,\mathbb{R})$ be another potential. Then, for all $\varepsilon > 0$, there exists $C,\delta_0 > 0$ such that
$$\forall n \geq 1, \ \mu_\varphi\left( \left\{ x \in J \ , \ \left|\frac{1}{n} S_n \psi(x) - \int_J \psi d\mu_{\varphi} \right| > \varepsilon \right\} \right) \leq C e^{- n \delta_0} .$$

\end{theorem}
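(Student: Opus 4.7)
The plan is to follow the classical exponential Chebyshev / G\"artner--Ellis strategy, which in the thermodynamic formalism reduces the large deviation estimate to a first-order analysis of the pressure function $t \mapsto P(\varphi + t\psi)$ near $t = 0$. Writing $\langle \psi \rangle := \int \psi \, d\mu_\varphi$, by symmetry it suffices to prove the upper-tail bound for $\{S_n\psi > n\langle \psi \rangle + n\varepsilon\}$; the other direction follows by replacing $\psi$ with $-\psi$. For $t > 0$, exponential Chebyshev gives
$$\mu_\varphi\{S_n\psi > n\langle \psi \rangle + n\varepsilon\} \leq e^{-tn(\langle \psi \rangle + \varepsilon)} \int_J e^{t S_n\psi}\, d\mu_\varphi.$$

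The key observation is the identity $\mathcal{L}_\varphi^n(e^{t S_n \psi}) = \mathcal{L}_{\varphi+t\psi}^n 1$, immediate from the definition of the transfer operator, combined with the invariance $\mathcal{L}_\varphi^* \mu_\varphi = \mu_\varphi$ coming from the normalization of $\varphi$. Together they yield
$$\int_J e^{tS_n\psi}\,d\mu_\varphi = \int_J \mathcal{L}_{\varphi+t\psi}^n 1 \, d\mu_\varphi \leq \|\mathcal{L}_{\varphi+t\psi}^n 1\|_\infty.$$
Applying the Perron--Frobenius--Ruelle theorem to the $C^1$ potential $\varphi+t\psi$ produces a positive leading eigenfunction $h_t \in C^1(U)$ with $\mathcal{L}_{\varphi+t\psi}h_t = e^{P(\varphi+t\psi)} h_t$, together with a spectral gap. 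Kato-type analytic perturbation theory for the simple leading eigenvalue then yields constants $C, t_0 > 0$ such that, for all $|t|\leq t_0$ and all $n \geq 1$,
$$\|\mathcal{L}_{\varphi+t\psi}^n 1\|_\infty \leq C\,e^{n P(\varphi + t\psi)}.$$
Plugging this back produces the estimate
$$\mu_\varphi\{S_n\psi > n\langle\psi\rangle + n\varepsilon\} \leq C\exp\Big(n\big[P(\varphi+t\psi) - t\langle\psi\rangle - t\varepsilon\big]\Big).$$

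To close the argument, I use that the same perturbation analysis shows $t \mapsto P(\varphi+t\psi)$ is $C^1$ near $0$, with $P(\varphi)=0$ by normalization and with derivative $\langle \psi \rangle$ at $t=0$. This derivative formula is obtained either by differentiating the eigenvalue relation $\mathcal{L}_{\varphi+t\psi}h_t = e^{P(\varphi+t\psi)} h_t$ at $t=0$, integrating against $\nu_\varphi$, and using $\mu_\varphi = h \nu_\varphi$, or directly from the variational principle and uniqueness of the equilibrium state. Consequently $P(\varphi+t\psi)-t\langle\psi\rangle = O(t^2)$ as $t \to 0$, and choosing $t > 0$ small enough so that the $O(t^2)$ term is bounded by $t\varepsilon/2$ gives exponential decay with rate $\delta_0 := t\varepsilon/2$.

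The main obstacle will be the uniform-in-$n$ bound $\|\mathcal{L}_{\varphi+t\psi}^n 1\|_\infty \leq C\, e^{n P(\varphi + t\psi)}$, since the Perron--Frobenius--Ruelle statement quoted in the paper only gives the value of the spectral radius, not an operator-norm bound on iterates that is uniform in $t$ in a neighborhood of $0$. To upgrade it, one must exploit the spectral gap on $C^1(U)$ together with the continuity of the leading spectral projector under the perturbation $t\psi$. This is by now classical in the thermodynamic formalism (see \cite{PP90} and \cite{Ru78}), but some care is needed in our $C^1$ conformal setting; the $C^1$ regularity of the pressure function follows as a byproduct of the same perturbation argument.
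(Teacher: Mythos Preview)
Your argument is correct and follows the same conceptual skeleton as the paper's proof in Appendix~A: both are exponential Chebyshev combined with the first-order behaviour of $t\mapsto P(\varphi+t\psi)$ at $t=0$. The execution differs. The paper covers the deviation set by Markov cylinders $P_{\mathbf a}$, bounds $\mu_\varphi(P_{\mathbf a})$ by the Gibbs estimate $C_0 e^{S_n\varphi(x_{\mathbf a})}$, uses the constraint $S_n\psi(x_{\mathbf a})\ge n(\langle\psi\rangle+\varepsilon)$ to upgrade this to $e^{S_n((\psi-\langle\psi\rangle-\varepsilon)t_0+\varphi)(x_{\mathbf a})}$, and then invokes the cylinder-sum formula for the pressure (Lemma~A.2) to close. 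You instead stay at the operator level via the identity $\int e^{tS_n\psi}\,d\mu_\varphi=\int\mathcal L_{\varphi+t\psi}^n 1\,d\mu_\varphi$. Your route is a bit more direct and bypasses the combinatorics of Markov words; the paper's route has the advantage that it only uses ingredients already developed in Section~2 (Gibbs estimates and the pressure formula), with no further spectral input.

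One comment: the ``main obstacle'' you flag is not actually an obstacle, and invoking Kato perturbation and a spectral gap is unnecessary. For any \emph{fixed} $t$, the Perron--Frobenius--Ruelle theorem (Theorem~2.5 in the paper) already gives a strictly positive continuous eigenfunction $h_t$ with $\mathcal L_{\varphi+t\psi}h_t=e^{P(\varphi+t\psi)}h_t$. Writing $1\le (\min_J h_t)^{-1}h_t$ and using that $\mathcal L_{\varphi+t\psi}$ preserves positivity yields immediately
\[
\mathcal L_{\varphi+t\psi}^n 1 \;\le\; (\min_J h_t)^{-1}\,\mathcal L_{\varphi+t\psi}^n h_t \;=\; (\min_J h_t)^{-1}\,e^{nP(\varphi+t\psi)}h_t \;\le\; \frac{\max_J h_t}{\min_J h_t}\,e^{nP(\varphi+t\psi)},
\]
which is the bound you need with $C=C_t$ depending only on the fixed $t$. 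No uniformity in $t$ is required, since you ultimately fix a single small $t>0$. Relatedly, you only need $P(\varphi+t\psi)-t\langle\psi\rangle=o(t)$ (i.e.\ differentiability, which the paper states as Theorem~A.3), not the $O(t^2)$ you wrote; the choice of $t$ then works exactly as in the paper's argument.
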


\begin{definition}

Let $\varphi \in C^1(U,\mathbb{R})$ be a normalized potential with equilibrium measure $\mu_\varphi$. \\ Let $\tau = \log |f'| \in C^1(U,\mathbb{R})$ be the distortion function. We call 
$$ \lambda_f(\mu_\varphi) := \int_J \tau \ d\mu_{\varphi} $$
the Lyapunov exponent of $\mu_\varphi$, and $ \delta := h_f(\mu_\varphi)/ \lambda(\mu_{\varphi}) $ the dimension of $\mu_\varphi$.

\end{definition}

\begin{remark}

The hyperbolicity and normalization assumptions ensure that $ h_f(\mu_\varphi), \lambda_f(\mu_\varphi) > 0$.
Indeed, we know that $\varphi<0$ on all $J$ and $P(\varphi)=0$, and so
$$ h_f(\mu_\varphi) = P(\varphi) - \int_J \varphi d\mu_\varphi > 0.$$
For the Lyapunov exponent, using the fact that $\mu_\varphi$ is $f$-invariant, we see that
$$ \lambda_f(\mu_\varphi) = \int_J \log |f'| d\mu_\varphi = \frac{1}{n} \sum_{k=0}^{n-1}  \int_J \log |f' \circ f^k| d\mu_\varphi $$ $$= \frac{1}{n} \int_J \log |(f^n)'| d\mu_\varphi \geq \frac{\log(c_0)}{n} + \log(\kappa) \rightarrow \log(\kappa) > 0 .$$

\end{remark}

\begin{proposition}
Let $\varphi \in C^1(U,\mathbb{R})$ be a normalized potential with equilibrium measure $\mu_\varphi$. Denote by $\lambda>0$ its Lyapunov exponent and $\delta>0$ its dimension. Then, for every $\varepsilon > 0$, there exists $C,\delta_0>0$ such that

$$ \forall n \geq 1, \ \mu_\varphi \left( \left\{ x \in J \ , \ \left|\frac{1}{n}S_n \tau(x) -  \lambda \right| \geq \varepsilon \text{ or } \left|\frac{S_n \varphi(x)}{S_n \tau(x)} + \delta \right| \geq \varepsilon \right\} \right) \leq C e^{- \delta_0 n} .$$

\end{proposition}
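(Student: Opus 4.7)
The proof is a direct application of Theorem 2.8 to the two potentials $\tau$ and $\varphi$, combined with an elementary algebraic manipulation to reduce the ratio estimate to additive estimates.

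The first step is to record the key identity that links the normalization of $\varphi$ to the constant $\delta$. Since $\varphi$ is normalized, $P(\varphi) = 0$ and the variational principle gives $h_f(\mu_\varphi) = -\int_J \varphi\, d\mu_\varphi$. Hence
$$ \delta \lambda \;=\; \frac{h_f(\mu_\varphi)}{\lambda_f(\mu_\varphi)} \cdot \lambda_f(\mu_\varphi) \;=\; -\int_J \varphi\, d\mu_\varphi. $$
In other words, $\int_J \varphi\, d\mu_\varphi = -\delta \lambda$. This is what will let us replace the target value $-\delta$ for the ratio $S_n\varphi/S_n\tau$ by a statement about additive Birkhoff averages.

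Next, fix $\varepsilon > 0$. Choose auxiliary parameters $\varepsilon_1, \varepsilon_2 > 0$ small enough that $\varepsilon_1 < \lambda/2$ and
$$ \frac{\varepsilon_2 + \delta\, \varepsilon_1}{\lambda - \varepsilon_1} \;<\; \varepsilon. $$
Apply Theorem 2.8 twice: once to the potential $\tau$ with tolerance $\varepsilon_1$, once to $\varphi$ with tolerance $\varepsilon_2$. This produces constants $C_1, C_2, \delta_1, \delta_2 > 0$ and sets
$$ A_n \;:=\; \Bigl\{ x \in J : \bigl|\tfrac{1}{n} S_n \tau(x) - \lambda\bigr| \geq \varepsilon_1 \Bigr\}, \qquad B_n \;:=\; \Bigl\{ x \in J : \bigl|\tfrac{1}{n} S_n \varphi(x) + \delta \lambda\bigr| \geq \varepsilon_2 \Bigr\}, $$
with $\mu_\varphi(A_n) \leq C_1 e^{-\delta_1 n}$ and $\mu_\varphi(B_n) \leq C_2 e^{-\delta_2 n}$.

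Now I claim that the bad set in the statement is contained in $A_n \cup B_n$. For $x \notin A_n \cup B_n$, we have $S_n\tau(x)/n \in [\lambda - \varepsilon_1, \lambda + \varepsilon_1]$, in particular $S_n\tau(x)/n \geq \lambda - \varepsilon_1 > 0$, and $|S_n\varphi(x)/n + \delta\lambda| < \varepsilon_2$. Writing
$$ \frac{S_n\varphi(x)}{S_n\tau(x)} + \delta \;=\; \frac{S_n\varphi(x)/n + \delta \cdot S_n\tau(x)/n}{S_n\tau(x)/n}, $$
the numerator can be bounded by adding and subtracting $\delta \lambda$:
$$ \Bigl| \tfrac{1}{n} S_n\varphi(x) + \delta\, \tfrac{1}{n} S_n\tau(x) \Bigr| \;\leq\; \Bigl| \tfrac{1}{n} S_n\varphi(x) + \delta\lambda \Bigr| + \delta \Bigl| \tfrac{1}{n} S_n\tau(x) - \lambda \Bigr| \;<\; \varepsilon_2 + \delta\,\varepsilon_1. $$
Dividing by $S_n\tau(x)/n \geq \lambda - \varepsilon_1$ and using the choice of $\varepsilon_1, \varepsilon_2$ gives $|S_n\varphi(x)/S_n\tau(x) + \delta| < \varepsilon$, and similarly $|S_n\tau(x)/n - \lambda| < \varepsilon_1 \leq \varepsilon$. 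Consequently the bad set lies in $A_n \cup B_n$, and a union bound with $C := C_1 + C_2$ and $\delta_0 := \min(\delta_1, \delta_2)$ finishes the proof. There is no substantial obstacle here: the only content beyond Theorem 2.8 is the identification $\int\varphi\, d\mu_\varphi = -\delta\lambda$ and the elementary estimate above.
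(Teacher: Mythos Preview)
Your proof is correct and follows essentially the same approach as the paper: both reduce to two applications of the large deviation theorem and the identity $\int_J \varphi\, d\mu_\varphi = -\delta\lambda$. The only cosmetic difference is that the paper applies the large deviation theorem to $\tau$ and to the mean-zero combination $\Phi := \varphi + \delta\tau$ (using the uniform hyperbolicity bound $S_n\tau \geq n\log\kappa + \log c_0$ to control the denominator), whereas you apply it to $\tau$ and $\varphi$ separately and then combine algebraically; you should also state explicitly that $\varepsilon_1 \leq \varepsilon$ when choosing your parameters, since you use this at the end.
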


\begin{proof}

Let $\varepsilon>0$.
Applying Theorem 2.8 to $\psi = \tau$ gives  $$ \mu_\varphi \left( \left\{ x \in J \ , \ \left|\frac{1}{n}S_n \tau(x) - \lambda\right| \geq \varepsilon \right\} \right) \leq C e^{- \delta_0 n} $$
for some $C$ and $\delta_0>0$. Next, if $x \in J$ satisfies $$\left|\frac{S_n \varphi(x)}{S_n \tau(x)} + \delta \right| \geq \varepsilon, $$
then we have $$ |S_n \Phi(x)| \geq \varepsilon |S_n \tau(x)| \geq \varepsilon (n \log \kappa + \log c_0) $$
for the modified potential $\Phi := \varphi + \delta \tau$. Notice that this potential is $C^1$, and that $$ \int_J \Phi \ d\mu_\varphi = \int_J \varphi d\mu_\varphi + \frac{h_f(\mu_\varphi)}{\lambda_f(\mu_\varphi)} \int_J \tau \ d\mu_\varphi = 0 .$$ For $n$ large enough, we get $|S_n \Phi(x)| \geq \varepsilon $, and so we can apply Theorem 2.8 to $\Phi$ again and conclude. \end{proof}

For clarity, we will replace $\mu_\varphi$ by $\mu$ in the rest of the paper. The dependence on $\varphi$ will be implied. 


\section{Computing some orders of magnitude}

In this section, we derive various orders of magnitude of quantities that appear when we iterate our transfer operator. We need to recall some useful formalism used in $\cite{BD17}$. 

\begin{itemize}
    \item For $n \geq 1$, recall that $\mathcal{W}_n$ is the set of admissible words of length $n$. (A word $\mathbf{a}$ is admissible if $f(P_{a_i}) \supset P_{a_{i+1}}$ for all $i$.) If $\textbf{a} = a_1 \dots a_n a_{n+1} \in \mathcal{W}_{n+1}$, define $\mathbf{a}' := a_1 \dots a_{n} \in \mathcal{W}_{n}$. 
    
    \item For $\textbf{a}=a_1 \dots a_{n+1} \in \mathcal{W}_{n+1}$, $\textbf{b} = b_1 \dots b_{m+1} \in \mathcal{W}_{m+1}$, we write $\textbf{a} \rightsquigarrow \textbf{b}$ if $a_{n+1} = b_1$. Note that when $\textbf{a} \rightsquigarrow \textbf{b}$, the concatenation $\textbf{a}'\textbf{b}$ is an admissible word of length $n+m+1$.
    
    \item For $\mathbf{a} \in \mathcal{W}_{n+1}$, define $b(\mathbf{a}) := a_{n+1}$.

\end{itemize}

With those notations, we can reformulate our formula for the iterate of our transfer operator. For a function $h : U \rightarrow \mathbb{C}$, we have:

$$ \forall x \in P_b, \ \mathcal{L}_\varphi^n h(x) = \underset{\mathbf{a} \rightsquigarrow b}{\sum_{\mathbf{a} \in \mathcal{W}_{n+1}}} e^{S_n \varphi(g_{\mathbf{a}}(x))} h(g_{\mathbf{a}}(x)) = \underset{\mathbf{a} \rightsquigarrow b}{\sum_{\mathbf{a} \in \mathcal{W}_{n+1}}} h(g_{\mathbf{a}}(x)) w_{\mathbf{a}}(x) ,$$

where $$ w_{\mathbf{a}}(x) := e^{S_n\varphi(g_{\mathbf{a}}(x))} .$$

Iterating $\mathcal{L}_\varphi^n$ again leads us to the formula
$$ \forall x \in P_b, \ \mathcal{L}_\varphi^{nk} h(x)  = \sum_{\mathbf{a}_1 \rightsquigarrow \dots \rightsquigarrow \mathbf{a_k} \rightsquigarrow b} h(g_{\mathbf{a}_1 ' \dots \mathbf{a}_{k-1} ' \mathbf{a}_k}(x) ) w_{\mathbf{a}_1 ' \dots \mathbf{a}_{k-1} ' \mathbf{a}_k}(x) .$$

We are interested in the behavior of, for example, $w_\mathbf{a}$ for well behaved $\mathbf{a}$. For this, we use the previously mentioned large deviation estimate. \\

This part is adapted from $\cite{SS20}$ and $\cite{JS16}$. Remember that $\overline{f^{-1}(D)} \subset D$.

\begin{definition}
For $\varepsilon>0$ and $n \geq 1$, write
$$ A_{n}(\varepsilon) := \left\{ x \in f^{-n}(D) \ , \ \left|\frac{1}{n}S_n \tau(x) -  \lambda \right| < \varepsilon \text{ and } \left|\frac{S_n \varphi(x)}{S_n \tau(x)} + \delta \right| < \varepsilon  \right\}. $$

Then Proposition 2.9 says that, for all $\varepsilon>0$, there exists $n_0(\varepsilon) \in \mathbb{N}$ and $\delta_0(\varepsilon)>0$ such that $$\forall n \geq n_0(\varepsilon), \ \mu(J \setminus A_n(\varepsilon)) \leq e^{- \delta_0(\varepsilon) n} .$$

\end{definition}

\begin{notations}
To simplify the reading, when two quantities dependent of $n$ satisfy $b_n \leq C a_n $ for some constant $C$, we denote it by $a_n \lesssim b_n$. If $a_n \lesssim b_n \lesssim a_n$, we denote it by $a_n \simeq b_n$. If there exists $c,C$ and $\alpha$, independent of $n$ and $\varepsilon$, such that $ c e^{- \varepsilon \alpha n} a_n \leq b_n \leq C e^{\varepsilon \alpha n} a_n$, we denote it by $a_n \sim b_n$. Throughout the text $\alpha$ will be allowed to change from line to line. It correspond to some positive constant. 
\end{notations}

Eventually, we will chose $\varepsilon$ small enough such that this exponentially growing term gets absorbed by the other leading terms, so we can neglect it.

\begin{proposition}
Let $\mathbf{a} \in \mathcal{W}_{n+1}$ be such that $D_{\mathbf{a}} \subset A_n(\varepsilon)$. Then:

\begin{itemize}
    \item uniformly on $x \in D_{b(\mathbf{a})}, \  |g_\mathbf{a}'(x)| \sim e^{- n \lambda}$
    \item $\mathrm{diam}(P_\mathbf{a}), \ \mathrm{diam}(U_\mathbf{a}), \ \mathrm{diam}(D_\mathbf{a})  \sim e^{- n \lambda} $
    \item uniformly on $x \in D_{\mathbf{a}}, \ w_{\mathbf{a}}(x) \sim e^{- \delta \lambda n} $
    \item $\mu(P_\textbf{a}) \sim e^{- \delta \lambda n}$
    
\end{itemize}

\end{proposition}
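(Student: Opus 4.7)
The plan is to establish the four estimates in the order given, since each leans on the previous. The key observation is that the hypothesis $D_{\mathbf{a}} \subset A_n(\varepsilon)$ means every point $y \in D_{\mathbf{a}}$ simultaneously satisfies $|\tfrac{1}{n}S_n\tau(y) - \lambda| < \varepsilon$ and $|S_n\varphi(y)/S_n\tau(y) + \delta| < \varepsilon$, so we can read off both Birkhoff sums at whatever point of $D_{\mathbf{a}}$ is most convenient. For item~1, I would use the relation $f^n \circ g_{\mathbf{a}} = \mathrm{id}$ on $D_{b(\mathbf{a})}$: differentiating and taking modulus gives $|g_{\mathbf{a}}'(x)| = e^{-S_n\tau(g_{\mathbf{a}}(x))}$, and since $g_{\mathbf{a}}(x) \in D_{\mathbf{a}} \subset A_n(\varepsilon)$ the first inequality in the definition of $A_n(\varepsilon)$ yields $|g_{\mathbf{a}}'(x)| \in [e^{-n(\lambda+\varepsilon)}, e^{-n(\lambda-\varepsilon)}]$, which is exactly $\sim e^{-n\lambda}$, uniformly in $x \in D_{b(\mathbf{a})}$. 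Item~3 is parallel: the same point $g_{\mathbf{a}}(x) \in A_n(\varepsilon)$ gives $S_n\varphi(g_{\mathbf{a}}(x)) = -(\delta + O(\varepsilon))\, S_n\tau(g_{\mathbf{a}}(x)) = -\delta\lambda n + O(\varepsilon n)$, and exponentiating yields $w_{\mathbf{a}}(x) \sim e^{-\delta \lambda n}$.

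For item~2, I would upgrade item~1 to diameter estimates via Koebe distortion. By Proposition~2.2(4), each $g_{ab}$ extends holomorphically and univalently to a neighborhood of $\overline{D_b}$, so composing yields that $g_{\mathbf{a}}$ is univalent on a fixed enlargement of $D_{b(\mathbf{a})}$; since $|\mathcal{A}|<\infty$, Koebe's distortion theorem produces an absolute constant $C_K$ such that $|g_{\mathbf{a}}'(y)|/|g_{\mathbf{a}}'(z)| \leq C_K$ for all $y,z \in D_{b(\mathbf{a})}$, uniformly in $\mathbf{a}$. Consequently, for each $K \in \{P_{a_{n+1}}, U_{a_{n+1}}, D_{a_{n+1}}\}$, $\mathrm{diam}(g_{\mathbf{a}}(K))$ is squeezed between $C_K^{-1}|g_{\mathbf{a}}'(x_0)|\,\mathrm{diam}(K)$ and $C_K|g_{\mathbf{a}}'(x_0)|\,\mathrm{diam}(K)$; the diameters of these finitely many base sets are uniformly bounded above and below away from $0$, so combining with item~1 gives $\mathrm{diam}(P_{\mathbf{a}}), \mathrm{diam}(U_{\mathbf{a}}), \mathrm{diam}(D_{\mathbf{a}}) \sim e^{-n\lambda}$. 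Item~4 then follows from the Gibbs estimate (Proposition~2.6): since $\varphi$ is normalized, $P(\varphi)=0$ and $\mu(P_{\mathbf{a}}) \simeq e^{S_n\varphi(x_{\mathbf{a}})}$ for any $x_{\mathbf{a}} \in P_{\mathbf{a}}$. Because $P_{\mathbf{a}} \subset D_{\mathbf{a}} \subset A_n(\varepsilon)$, the same argument used for item~3 applied at the point $x_{\mathbf{a}}$ in place of $g_{\mathbf{a}}(x)$ gives $\mu(P_{\mathbf{a}}) \sim e^{-\delta \lambda n}$.

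The main obstacle is the uniform bounded distortion used in item~2; everything else is essentially a matter of choosing the right point at which to evaluate $S_n \tau$ and $S_n \varphi$. The Koebe step is standard but requires that the holomorphic extension provided by Proposition~2.2 supply a \emph{uniform} collar around $D_{b(\mathbf{a})}$ independent of $\mathbf{a}$ and $n$; this is automatic because $g_{\mathbf{a}}$ ends with a single branch $g_{a_n a_{n+1}} : \overline{D_{a_{n+1}}} \to \overline{D_{a_n}}$ that extends to a neighborhood, and only the geometry of the finitely many $\overline{D_b}$ enters, so the Koebe constant is applied once per word and depends only on $\mathcal{A}$.
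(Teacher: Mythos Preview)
Your argument is correct and, for items 1, 3, and 4, essentially identical to the paper's: you evaluate $S_n\tau$ and $S_n\varphi$ at a point of $D_{\mathbf{a}}\subset A_n(\varepsilon)$ and read off the bounds, then invoke the Gibbs estimate for $\mu(P_{\mathbf{a}})$.

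The one genuine difference is item 2. You obtain the diameter estimates by establishing uniform bounded distortion for $g_{\mathbf{a}}$ on $D_{b(\mathbf{a})}$ via Koebe (using that the last branch $g_{a_n a_{n+1}}$ maps $\overline{D_{a_{n+1}}}$ compactly into $D_{a_n}$, so the whole composition extends to a fixed collar), and then convert $|g_{\mathbf{a}}'(x_0)|\sim e^{-n\lambda}$ into a two–sided diameter bound. The paper instead gets the upper bound from convexity of $D_{a_{n+1}}$ and the lower bound by integrating $(f^n)'$ along the straight segment $[g_{\mathbf{a}}(x),g_{\mathbf{a}}(y)]$, which is legal because Lemma~2.3 guarantees $\text{Conv}(P_{\mathbf{a}})\subset D_{\mathbf{a}}\subset A_n(\varepsilon)$. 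Both routes ultimately rest on Koebe; yours is the standard ``bounded distortion $\Rightarrow$ diameter comparison'' argument for conformal IFS, while the paper's is tailored to reuse Lemma~2.3, which it needs again later for the phase linearization in Section~4. One small point worth making explicit in your write-up: the lower bound $\mathrm{diam}(g_{\mathbf{a}}(K))\gtrsim |g_{\mathbf{a}}'(x_0)|\,\mathrm{diam}(K)$ does not follow from the derivative-ratio bound alone; it uses the Koebe \emph{growth} inequality $|g_{\mathbf{a}}(y)-g_{\mathbf{a}}(x)|\gtrsim |g_{\mathbf{a}}'(x)|\,|y-x|$ on a disk, which your collar provides.
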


\begin{remark}
Intuitively speaking, here is what is happening.
Proposition 2.9 states that, for most $x \in J$, $\frac{1}{n} S_n \tau \cong \lambda$ and $ \frac{1}{n} S_n \varphi \cong - \lambda \delta $. Then, recall that $ \text{diam}( P_{\mathbf{a}} ) \simeq |(f^n)'|^{-1}  = e^{-S_n{ \tau } },$ and so $\text{diam} P_{\mathbf{a}} \sim e^{- \lambda n}$ for most words $\textbf{a}$. Notice that the presence of the Lyapunov exponent in the exponential is not surprising, since it is defined to represent a characteristic frequency of our problem.
Samely, we can argue that since our equilibrium measure satisfies the Gibbs estimate, we have $ \mu( P_{\mathbf{a}} ) \simeq e^{S_n{\varphi}} ,$
and so $ \mu(P_{\mathbf{a}}) \sim e^{- \delta \lambda n} $
for most words $\mathbf{a}$. Again, it is no surprise that this exponent appears here: we recognize that $ \mu(P_{\mathbf{a}}) \sim \text{diam}(P_\mathbf{a})^\delta $, where $\delta$ is the dimension of our measure.

\end{remark}

\begin{proof}

Let $\mathbf{a} \in \mathcal{W}_{n+1}$ be such that $D_{\mathbf{a}} \subset A_n(\varepsilon)$. We have

$$ \forall x \in D_{b(\mathbf{a})}, \ |g_{\mathbf{a}}'(x)| = e^{-S_n \tau (g_{\mathbf{a}}(x))}  ,$$
and so $$ \forall x \in D_{b(\mathbf{a})}, \  e^{-n \lambda } e^{-n \varepsilon} \leq |g_{\mathbf{a}}'(x)| \leq e^{- n \lambda} e^{ n \varepsilon}. $$
For the diameters, the argument uses the conformal setting, through the Koebe quarter theorem. By  lemma 2.3, $\text{Conv}(P_{\mathbf{a}}) \subset D_{\mathbf{a}} \subset A_n(\varepsilon)$. Hence:

$$ \forall x,y \in P_{a_n}, \ |x-y| = |f^n(g_{\mathbf{a}}(x)) - f^n(g_{\mathbf{a}}(y))| $$ 
$$ \leq \int_{0}^1 |(f^n)'\left(g_{\mathbf{a}}(y)+t(g_{\mathbf{a}}(x)-g_{\mathbf{a}}(y))\right)| dt \ |(g_{\mathbf{a}}(x)-g_{\mathbf{a}}(y))| \leq e^{\varepsilon n} e^{n \lambda} \text{diam}(P_{\mathbf{a}}) ,$$
and so $ e^{- \varepsilon n} e^{-\lambda n} \text{diam}(P_{a_n}) \leq \text{diam}(P_{\mathbf{a}}) $. Next, we write
$$ \text{diam}(P_{\mathbf{a}}) \leq \text{diam}(U_{\mathbf{a}}) \leq \text{diam}(D_{\mathbf{a}})$$
and 
$$ \text{diam}(D_{\mathbf{a}}) = \text{diam}(g_{\mathbf{a}}(D_{a_n})) \leq e^{\varepsilon n} e^{-\lambda n} \text{diam}(D_{a_n}) .$$
by convexity of $D_{a_n}$.
We have proved that $\text{diam}(P_\mathbf{a}), \ \text{diam}(U_\mathbf{a}), \ \text{diam}(D_\mathbf{a})  \sim e^{- n \lambda} $. \\
Next, consider the weight $w_{\mathbf{a}}(x)$. We have $$ w_{\mathbf{a}}(x)   = e^{S_n \varphi(g_{\mathbf{a}}(x))},$$
so
$$ e^{- \delta S_n \tau(x)} e^{- \varepsilon |S_n \tau(x)|} \leq w_{\mathbf{a}}(x) \leq e^{- \delta S_n \tau(x)} e^{\varepsilon |S_n \tau(x)|} ,$$ and hence
$$ e^{- \delta \lambda n} e^{- \varepsilon (\lambda + \delta + \varepsilon) n } \leq w_{\mathbf{a}}(x) \leq e^{- \delta \lambda n} e^{\varepsilon (\lambda + \delta + \varepsilon) n } .$$
Finally, since $\mu$ is a Gibbs measure for some constant parameter $C_0$, and with pressure 0, we can write:
$$ C_0^{-1} e^{- \delta \lambda n} e^{- \varepsilon (\lambda + \delta + \varepsilon) n } \leq \mu(P_{\mathbf{a}}) \leq C_0 e^{- \delta \lambda n} e^{\varepsilon (\lambda + \delta + \varepsilon) n }. $$ \end{proof}

\begin{definition}

Define the set of ($\varphi$-)regular words by
$$ \mathcal{R}_{n+1}(\varepsilon) := \left\{ \mathbf{a} \in \mathcal{W}_{n+1} \ | \ D_{\mathbf{a}} \subset A_{n}(\varepsilon) \right\}, $$
and the set of regular k-blocks by $$ \mathcal{R}_{n+1}^k(\varepsilon) = \left\{ \mathbf{A}=\mathbf{a}_1' \dots \mathbf{a}_{k-1}' \mathbf{a}_k \in \mathcal{W}_{nk+1} \ | \ \forall i, \ \mathbf{a}_i \in \mathcal{R}_{n+1}(\varepsilon) \right\} .$$ 
Finally, define the associated geometric points to be $$ R_{n+1}^k(\varepsilon) := \bigcup_{\mathbf{A} \in \mathcal{R}_{n+1}^k(\varepsilon) } P_{\mathbf{A}} .$$

\end{definition}

\begin{lemma}
 There exists $n_1(\varepsilon)$ such that, for all $n \geq n_1(\varepsilon)$, we have:
$$ J \cap  A_{n}(\varepsilon/2) \subset R_{n+1}(\varepsilon).$$

\end{lemma}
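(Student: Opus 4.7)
The plan is, for each $x \in J \cap A_n(\varepsilon/2)$, to choose an admissible word $\mathbf{a} \in \mathcal{W}_{n+1}$ with $x \in P_{\mathbf{a}}$ (always possible since $J = \bigcup_{\mathbf{a} \in \mathcal{W}_{n+1}} P_{\mathbf{a}}$; for boundary points one just picks any such word), and then to upgrade the two inequalities defining $A_n(\varepsilon/2)$ at the single point $x$ to the corresponding inequalities defining $A_n(\varepsilon)$ at every point of $D_{\mathbf{a}}$. The inclusion $D_{\mathbf{a}} \subset f^{-n}(D)$ is automatic from $f^n(D_{\mathbf{a}}) = D_{a_{n+1}} \subset D$, so only the two Birkhoff bounds need to be checked throughout $D_{\mathbf{a}}$.

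The main tool is a bounded distortion estimate. For any $y \in D_{\mathbf{a}}$ and any $0 \leq k \leq n-1$, the iterate $f^k(y)$ lies in $D_{a_{k+1}\dots a_{n+1}}$, whose diameter is at most $c_0^{-1}\kappa^{-(n-k)}\alpha_0$ by Remark 2.1. Since $\tau = \log|f'|$ and $\varphi$ are both Lipschitz on $\overline{D}$ (with constants $L_\tau$ and $C_\varphi$), telescoping the Birkhoff sums yields
$$ |S_n\tau(y) - S_n\tau(x)| \leq L_\tau \sum_{k=0}^{n-1} c_0^{-1}\kappa^{-(n-k)}\alpha_0 \leq C_1, \qquad |S_n\varphi(y) - S_n\varphi(x)| \leq C_2, $$
with constants $C_1, C_2$ independent of $n$, $x$, $y$, and $\mathbf{a}$.

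From $|S_n\tau(y) - S_n\tau(x)| \leq C_1$ one immediately gets $|\tfrac{1}{n}S_n\tau(y) - \lambda| \leq \varepsilon/2 + C_1/n$. For the ratio condition, decompose
$$ \frac{S_n\varphi(y)}{S_n\tau(y)} - \frac{S_n\varphi(x)}{S_n\tau(x)} = \frac{S_n\varphi(y) - S_n\varphi(x)}{S_n\tau(y)} + \frac{S_n\varphi(x)}{S_n\tau(x)} \cdot \frac{S_n\tau(x) - S_n\tau(y)}{S_n\tau(y)}. $$
The denominators are $\geq n\log\kappa + \log c_0$ by the hyperbolicity assumption, the numerators are $O(1)$ by the distortion bounds, and $|S_n\varphi(x)/S_n\tau(x)| \leq \delta + \varepsilon/2$ since $x \in A_n(\varepsilon/2)$; hence the difference is $O(1/n)$. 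Choosing $n_1(\varepsilon)$ large enough that both of the $O(1/n)$ error terms fall below $\varepsilon/2$ therefore guarantees that both defining inequalities of $A_n(\varepsilon)$ hold everywhere on $D_{\mathbf{a}}$, which is exactly what we need.

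I do not expect a serious obstacle here: the statement is essentially a standard consequence of the bounded distortion property, built into the hyperbolic setting by the exponential decay of cylinder diameters established in Remark 2.1. The only mild subtlety is to make sure the ratio estimate is handled via the decomposition above rather than trying to divide the two one-sided inequalities directly, since $S_n\varphi$ can be negative.
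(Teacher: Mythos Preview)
Your proof is correct and follows essentially the same approach as the paper: pick a word $\mathbf{a}$ with $x\in P_{\mathbf{a}}$, use the exponentially decreasing variations (bounded distortion) to get $|S_n\tau(y)-S_n\tau(x)|,\ |S_n\varphi(y)-S_n\varphi(x)|=O(1)$ on $D_{\mathbf{a}}$, and conclude both defining inequalities of $A_n(\varepsilon)$ hold for $n$ large. The only cosmetic difference is that the paper decomposes the ratio difference over a common denominator $S_n\tau(x)S_n\tau(y)$ and bounds $|S_n\varphi(x)|$ via the uniform bound on $S_n\varphi/n$, whereas you use the equivalent telescoping decomposition and bound $|S_n\varphi(x)/S_n\tau(x)|\le \delta+\varepsilon/2$ directly from $x\in A_n(\varepsilon/2)$.
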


\begin{proof}

Let $x \in A_{n}(\varepsilon/2)$. There exists $\mathbf{a} \in \mathcal{W}_{n+1}$ such that $x \in D_{\mathbf{a}}$. To conclude, it suffices to show that $D_{\mathbf{a}} \subset A_n(\varepsilon)$. So let $y \in D_{\mathbf{a}}$.
We already saw in remark 2.1 that Lipschitz potentials have exponentially decreasing variations. It implies in particular the existence of some constant $C>0$, which depends only on $f$ here, such that
$$ \left| S_n \tau(y) - S_n \tau(x) \right| \leq C .$$
Hence, we have
$$ \left| \frac{S_n \tau(y)}{n} - \lambda \right| = \left| \frac{S_n \tau(x)}{n} - \lambda\right| + \frac{1}{n}\left| S_n \tau(y) - S_n \tau(x) \right| \leq \varepsilon/2 + \frac{C}{n} \leq \varepsilon $$
as long as we chose $n$ large enough, depending on $\varepsilon$. Samely, we can write
$$ \left| \frac{S_n \varphi(y)}{S_n \tau(y)} + \delta \right| \leq \varepsilon/2 + \left| \frac{S_n \varphi(y)}{S_n \tau(y)} - \frac{S_n \varphi(x)}{S_n\tau(x)} \right| .$$
Since $S_n \tau = \log |(f^n)'| \geq \log(c_0) + n \log(\kappa)$ uniformly on $D$ for some $\kappa > 1$, we get 
$$ \left| \frac{S_n \varphi(y)}{S_n \tau(y)} + \delta \right| \leq \varepsilon/2 + \frac{1}{(\log(c_0) + n \log(\kappa))^2}\left| S_n \varphi(y) S_n \tau(x) - S_n \varphi(x) S_n \tau(y) \right| $$
$$ \leq \varepsilon/2 + \frac{|S_n \tau(x)| }{(\log(c_0) + n \log(\kappa))^2}\left| S_n \varphi(y)  - S_n \varphi(x) \right| + \frac{ |S_n \varphi(x)|}{(\log(c_0) + n \log(\kappa))^2}\left|  S_n \tau(x) - S_n \tau(y) \right| $$
$$ \leq \varepsilon/2 + \frac{C}{n} $$

for some constant $C$, where we used the fact that $(S_n \varphi)/n$ is uniformly bounded on $f^{-n}(\overline{D})$ and the preceding remark on potentials with exponentially vanishing variations.
Again, choosing $n$ large enough depending on $\varepsilon$ allows us to conclude. \end{proof}

\begin{proposition}

We have the following cardinality estimate:
$$ \# \mathcal{R}_{n+1}(\varepsilon) \sim e^{\delta \lambda n} .$$

Moreover, there exists $n_2(\varepsilon)$ and $\delta_1(\varepsilon)>0$ such that
$$ \forall n \geq n_2(\varepsilon), \ \mu\left( J \setminus R_{n+1}(\varepsilon) \right) \leq  e^{ - \delta_1(\varepsilon) n} .$$

\end{proposition}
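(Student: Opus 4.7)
The plan is to deduce both statements by combining Proposition 3.1 (Gibbs-type control of $\mu(P_\mathbf{a})$ for regular words) with Lemma 3.2 and the large deviation estimate from Proposition 2.10. Nothing genuinely new has to happen here; the work is in stitching the pieces together correctly and tracking the $e^{\varepsilon \alpha n}$ factors built into the $\sim$ notation.

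First I would prove the measure estimate, which is the simplest. Lemma 3.2 gives $J \cap A_n(\varepsilon/2) \subset R_{n+1}(\varepsilon)$ for $n \geq n_1(\varepsilon)$, so
\[
    J \setminus R_{n+1}(\varepsilon) \;\subset\; J \setminus A_n(\varepsilon/2).
\]
Applying Proposition 2.10 with the parameter $\varepsilon/2$ produces some $\delta_0(\varepsilon/2)>0$ and an integer $n_0(\varepsilon/2)$ such that $\mu(J \setminus A_n(\varepsilon/2)) \leq e^{-\delta_0(\varepsilon/2)n}$. Setting $\delta_1(\varepsilon):=\delta_0(\varepsilon/2)$ and $n_2(\varepsilon):=\max(n_0(\varepsilon/2),n_1(\varepsilon))$ finishes this part.

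Next I would do the cardinality estimate. For the upper bound, recall from Section 2.3 that the pieces $(P_\mathbf{a})_{\mathbf{a}\in\mathcal{W}_{n+1}}$ tile $J$ with pairwise $\mu$-null overlaps, so
\[
    \sum_{\mathbf{a} \in \mathcal{R}_{n+1}(\varepsilon)} \mu(P_\mathbf{a}) \;\leq\; \mu(J) \;=\; 1.
\]
Proposition 3.1 gives $\mu(P_\mathbf{a}) \geq c\, e^{-\varepsilon\alpha n} e^{-\delta\lambda n}$ for each regular $\mathbf{a}$, hence $\# \mathcal{R}_{n+1}(\varepsilon) \leq c^{-1} e^{\varepsilon \alpha n} e^{\delta \lambda n}$, which is the correct upper bound in the $\sim$ sense. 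For the lower bound, the measure estimate just proved yields
\[
    \mu\bigl(R_{n+1}(\varepsilon)\bigr) \;=\; \sum_{\mathbf{a} \in \mathcal{R}_{n+1}(\varepsilon)} \mu(P_\mathbf{a}) \;\geq\; 1 - e^{-\delta_1(\varepsilon) n} \;\geq\; \tfrac{1}{2}
\]
for $n$ large enough. Combined with the upper Gibbs bound $\mu(P_\mathbf{a}) \leq C\, e^{\varepsilon\alpha n} e^{-\delta\lambda n}$ from Proposition 3.1, this gives $\#\mathcal{R}_{n+1}(\varepsilon) \geq (2C)^{-1} e^{-\varepsilon\alpha n} e^{\delta\lambda n}$, completing the lower bound.

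There is no real obstacle; the only thing that requires minor care is the order in which the statements are proved, since the lower cardinality bound uses the measure estimate. Adjusting $n_2(\varepsilon)$ to be the maximum of the thresholds coming from Proposition 2.10, Lemma 3.2 and Proposition 3.1 makes every step uniform, and the constant $\alpha$ in the $\sim$ notation absorbs the various $\lambda+\delta+\varepsilon$ factors from Proposition 3.1 exactly as in Remark 3.1.
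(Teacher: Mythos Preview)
Your proof is correct and follows essentially the same route as the paper: first establish the measure bound via Lemma 3.2 and the large deviation estimate at level $\varepsilon/2$, then sandwich $\sum_{\mathbf{a}\in\mathcal{R}_{n+1}(\varepsilon)}\mu(P_\mathbf{a})$ between $1-e^{-\delta_1 n}$ and $1$ and divide by the Gibbs bounds on $\mu(P_\mathbf{a})$ from Proposition 3.1 to get the cardinality estimate. The order of the two steps and the way constants are absorbed into the $\sim$ notation match the paper's argument exactly.
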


\begin{proof}

By the preceding lemma, we can write, for $n \geq n_1(\varepsilon)$:
$$ J \cap A_{n}(\varepsilon/2) \subset R_{n+1}(\varepsilon).$$
Moreover, we also know that there exists $n_0(\varepsilon/2)$ and $\delta_0(\varepsilon/2)$ such that, for all $n \geq n_0(\varepsilon/2)$, we have
$$ \mu\left( J \setminus A_n(\varepsilon/2) \right) \leq e^{- \delta_0(\varepsilon/2) n} .$$

So define $n_2(\varepsilon) := \max(n_1(\varepsilon), n_0(\varepsilon/2)/\varepsilon_0 ).$ For all $n \geq n_2(\varepsilon)$, we then have:

$$ \mu(J \setminus R_{n+1}(\varepsilon)) \leq \mu\left(J \setminus A_n(\varepsilon/2) \right) \leq e^{- \delta_0(\varepsilon/2) n } .$$

Next, the cardinality estimates follow from the bound on the measure. Indeed, we know that, for $n \geq n_2(\varepsilon)$:
$$ 1 = \mu(R_{n+1}(\varepsilon)) + \mu(J \setminus R_{n+1}(\varepsilon)) \leq \sum_{\mathbf{a} \in \mathcal{R}_{n+1}(\varepsilon)} \mu(P_\mathbf{a})  + e^{- \delta_0(\varepsilon/2) n} ,$$
and so
$$ 1 - e^{- \delta_0(\varepsilon/2) n} \leq \sum_{\mathbf{a} \in \mathcal{R}_{n+1}(\varepsilon)} \mu(P_\mathbf{a}) \leq 1 .$$

We then use the estimate obtained for $\mu(P_\mathbf{a})$, that is,

$$ C_0^{-1} e^{- \delta \lambda n} e^{- \varepsilon (\lambda + \delta + \varepsilon) n } \leq \mu(P_{\mathbf{a}}) \leq C_0 e^{- \delta \lambda n} e^{\varepsilon (\lambda + \delta + \varepsilon) n } ,$$

and we obtain
$$ C_0^{-1} e^{\delta \lambda n} e^{- \varepsilon(\lambda + \delta + \varepsilon)n} \left(  1 - e^{- \delta_0(\varepsilon/2) n} \right) \leq \# \mathcal{R}_{n+1}(\varepsilon) \leq C_0 e^{\delta \lambda n} e^{\varepsilon(\lambda + \delta + \varepsilon)}, $$

which proves that $\# \mathcal{R}_{n+1}(\varepsilon) \sim e^{\delta \lambda n}$.

\end{proof}

\begin{proposition}

For all $n \geq n_2(\varepsilon)$,  $$ \mu\left(J \setminus R_{n+1}^k(\varepsilon) \right) \leq k e^{ - \delta_1(\varepsilon) n} ,$$
and so $$ \# \mathcal{R}_{n+1}^k(\varepsilon) \sim e^{k \delta \lambda n} .$$

\end{proposition}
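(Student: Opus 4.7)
The plan is to mimic the argument of the preceding proposition, with the extra ingredient of $f$-invariance of $\mu$ to reduce the $k$-block statement to $k$ copies of the single-block statement. First I would establish the measure bound via the following set-theoretic observation: for $\mu$-a.e.\ $x \in J$, one has $x \in R_{n+1}^k(\varepsilon)$ if and only if $f^{jn}(x) \in R_{n+1}(\varepsilon)$ for every $j=0,1,\dots,k-1$. Indeed, if $x$ lies in the interior (inside $J$) of the unique Markov piece $P_\mathbf{A}$ of depth $nk+1$ containing it and $\mathbf{A} = \mathbf{a}_1' \dots \mathbf{a}_{k-1}' \mathbf{a}_k$, then $f^{(j-1)n}(x) \in P_{\mathbf{a}_j}$, so regularity of each $\mathbf{a}_j$ is equivalent to $f^{(j-1)n}(x) \in R_{n+1}(\varepsilon)$. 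Taking complements,
$$ J \setminus R_{n+1}^k(\varepsilon) \subset \bigcup_{j=0}^{k-1} f^{-jn}\bigl( J \setminus R_{n+1}(\varepsilon) \bigr) \cup N, $$
where $N \subset \bigcup_{m\geq 0} f^{-m}\bigl(\bigcup_a \partial P_a\bigr)$ is a $\mu$-null set (as noted after Proposition 2.2, using ergodicity and the fact that $\bigcup_a \partial P_a$ is $f$-invariant). By $f$-invariance of $\mu$ and the previous proposition, this gives
$$ \mu\bigl( J \setminus R_{n+1}^k(\varepsilon) \bigr) \leq \sum_{j=0}^{k-1} \mu\bigl( J \setminus R_{n+1}(\varepsilon) \bigr) \leq k \, e^{-\delta_1(\varepsilon) n}. $$

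For the cardinality, I would follow the exact pattern of the preceding proof. Writing $R_{n+1}^k(\varepsilon)$ as the disjoint-mod-null union of the pieces $P_\mathbf{A}$ with $\mathbf{A} \in \mathcal{R}_{n+1}^k(\varepsilon)$ and using the measure bound just obtained,
$$ 1 - k e^{-\delta_1(\varepsilon) n} \leq \sum_{\mathbf{A} \in \mathcal{R}_{n+1}^k(\varepsilon)} \mu(P_\mathbf{A}) \leq 1. $$
It then suffices to show $\mu(P_\mathbf{A}) \sim e^{-k \delta \lambda n}$ for each regular $k$-block, and conclude by dividing. The Gibbs estimate with $P(\varphi)=0$ gives $\mu(P_\mathbf{A}) \simeq e^{S_{nk}\varphi(x_\mathbf{A})}$ for any $x_\mathbf{A} \in P_\mathbf{A}$. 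Splitting the Birkhoff sum as $S_{nk}\varphi(x_\mathbf{A}) = \sum_{j=1}^k S_n \varphi\bigl(f^{(j-1)n}(x_\mathbf{A})\bigr)$ and applying the single-block weight estimate $w_{\mathbf{a}_j} \sim e^{-\delta \lambda n}$ from Proposition~3.1 to each summand (legitimate because $f^{(j-1)n}(x_\mathbf{A}) \in P_{\mathbf{a}_j} \subset D_{\mathbf{a}_j} \subset A_n(\varepsilon)$) gives the desired $\sim$-estimate, with an exponential slack of the form $e^{\pm \varepsilon \alpha k n}$; since $k$ is fixed, this is absorbed in the $\sim$ notation, yielding $\#\mathcal{R}_{n+1}^k(\varepsilon) \sim e^{k \delta \lambda n}$.

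The only point requiring care is the set-theoretic inclusion used at the start, where one must handle the ambiguity at the boundaries of Markov pieces; this is harmless because $\mu(\bigcup_a \partial P_a) = 0$ by ergodicity, and the countable union of its $f$-preimages is still $\mu$-null. Beyond that the argument is bookkeeping: all the genuine analytic content (Gibbs estimate, large deviations, weight control for regular single blocks) has already been established in Sections~2 and~3, and the present statement is a straightforward propagation of those estimates along $k$-fold concatenations.
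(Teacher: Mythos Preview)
Your proof is correct and follows essentially the same route as the paper. The paper proves the inclusion $\bigcap_{i=0}^{k-1} f^{-ni}(\tilde{R}_{n+1}(\varepsilon)) \subset R_{n+1}^k(\varepsilon)$ using the interiors $\tilde{R}_{n+1}$ to sidestep boundary ambiguities, whereas you phrase the same fact as an a.e.\ equivalence with an explicit null set $N\subset \bigcup_m f^{-m}(\bigcup_a \partial P_a)$; both then invoke $f$-invariance of $\mu$ and the single-block bound. For the cardinality, you spell out $\mu(P_{\mathbf{A}})\sim e^{-k\delta\lambda n}$ via the Birkhoff-sum splitting, which is exactly what the paper's ``done as before'' abbreviates.
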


\begin{proof}

Define $\tilde{R}_{n+1}(\varepsilon) := \bigsqcup_{\mathbf{a} \in \mathcal{R}_{n+1}} \text{int}_J P_{\mathbf{a}} $. From the point of view of the measure $\mu$, it is indistinguishable from $R_{n+1}(\varepsilon)$. First, we prove that
$$ \bigcap_{i=0}^{k-1} f^{-n i} \left( \tilde{R}_{n+1}(\varepsilon) \right) \subset R_{n+1}^k(\varepsilon) .$$
Let $x \in \bigcap_{i=0}^{k-1} f^{-n i} \left( \tilde{R}_{n+1}(\varepsilon) \right) $. Since there exists $\mathbf{A}=\mathbf{a}_1' \dots \mathbf{a}_{k-1}' \mathbf{a} _k\in \mathcal{W}_{kn+1}$ such that $x \in P_{\mathbf{A}}$, we see that for any $i$ we can write $f^{ni}(x) \in P_{\mathbf{a}_{1+i}' \dots \mathbf{a}_k} \cap \tilde{R}_{n+1}(\varepsilon) $.
So there exists $\mathbf{b}_{i+1} \in \mathcal{R}_{n+1}(\varepsilon)$ such that $ P_{\mathbf{a}_{1+i}' \dots \mathbf{a}_k} \cap \text{int}_J P_{\mathbf{b}_{i+1}} \neq \emptyset$. Then $\mathbf{b}_{i+1} = \mathbf{a}_{i+1}$, for all $i$, which implies that $\mathbf{A} \in \mathcal{R}_{n+1}^k(\varepsilon)$. \\
Now that the inclusion is proved, we see that $$ \mu\left( J \setminus R_{n+1}^k \right) \leq \sum_{i=0}^{k-1} \mu\left( f^{-n i} \left( J \setminus \tilde{R}_{n+1} \right) \right) $$
$$ = k \mu( J \setminus R_{n+1} ), $$
and we conclude by the previous theorem. The cardinal estimate is done as before. \end{proof}


\section{Reduction to sums of exponentials}

We can finally begin the proof of the main Theorem 1.4. Recall that $f$ is a hyperbolic rational map of degree $d \geq 2$, and that $J \subset \mathbb{C}$ denotes its Julia set, which is supposed \textbf{not to be included in a circle}. Fix a small Markov partition $(P_a)_{a \in \mathcal{A}}$ and open sets $(U_a)_{a \in \mathcal{A}}$ and $(D_a)_{a \in \mathcal{A}}$ as in Proposition 2.2. Finally, fix a normalized $\varphi \in C^1(V,\mathbb{R})$, and denote by $\mu$ its associated equilibrium state.  \\

We wish to prove that $\widehat{\mu}$ exhibits some polynomial decay. For this, recall that
$$ \widehat{\mu}(\xi) = \int_J e^{-2i \pi x \cdot \xi} d\mu(x) \ ,$$
where $x$ and $\xi$ are seen in $\mathbb{R}^2$, and where $\cdot$ is the usual inner product. We will use the invariance of $\mu$ by the transfer operator. Since it involves the inverse branches $g_{\mathbf{a}}$, we will rewrite this integral in a more complex fashioned way. We can write:

$$ \widehat{\mu}(\xi) = \int_J e^{-2 i \pi \text{Re}(x \overline{\xi})} d\mu(x) \ ,$$

where this time, $x$ and $\xi$ are seen as complex numbers.
As we will be interested in intertwining blocks of words, we need a new set of notations, inspired from the one used in \cite{BD17}. For a fixed $n$ and $k$, denote:

\begin{itemize}
    \item $\textbf{A}=(\textbf{a}_0, \dots, \textbf{a}_k) \in \mathcal{W}_{n+1}^{k+1} \ , \ \textbf{B}=(\textbf{b}_1, \dots, \textbf{b}_k) \in \mathcal{W}_{n+1}^{k} $.
    \item We write $\textbf{A} \leftrightarrow \textbf{B}$ iff $\textbf{a}_{j-1} \rightsquigarrow \textbf{b}_j \rightsquigarrow \textbf{a}_j$ for all $j=1,\dots k$.
    \item If $\textbf{A} \leftrightarrow \textbf{B}$, then we define the words $\textbf{A} * \textbf{B} := \textbf{a}_0' \textbf{b}_1' \textbf{a}_1' \textbf{b}_2' \dots \textbf{a}_{k-1}' \textbf{b}_k' \textbf{a}_k$ and $\textbf{A} \# \textbf{B} :=  \textbf{a}_0' \textbf{b}_1' \textbf{a}_1' \textbf{b}_2' \dots \textbf{a}_{k-1}' \textbf{b}_k$.
    \item Denote by $b(\textbf{A}) \in \mathcal{A}$ the last letter of $\textbf{a}_k$.
    
\end{itemize}

Then, we can write:

$$ \forall x \in P_{b}, \ \mathcal{L}_\varphi^{(2k+1)n} h(x) = \underset{\mathbf{A} \rightsquigarrow b }{\sum_{\mathbf{A} \leftrightarrow \mathbf{B}}} h(g_{\mathbf{A} * \mathbf{B}}(x)) w_{\mathbf{A} * \mathbf{B}}(x) .$$
In particular, the invariance of $\mu$ under $\mathcal{L}_\varphi$ allows us to write the following formula:

$$ \widehat{\mu}(\xi) = \sum_{\mathbf{A} \leftrightarrow \mathbf{B}} \int_{P_{b(\mathbf{A})}} e^{-2 i \pi \text{Re}\left( \overline{\xi} g_{\mathbf{A} * \mathbf{B}}(x) \right)} w_{\mathbf{A} * \mathbf{B}}(x) d\mu(x) .$$

In this section, our goal is to relate this quantity to a well behaved sum of exponentials. To this end, we will need to introduce various parameters that will be chosen in section 5. Before going on, let us explain the role of those different quantities. 

Five quantities will be at play: $\xi,n,k,\varepsilon_0$ and $\varepsilon$. The parameters $k,\varepsilon_0$ and $\varepsilon$ must be thought as being \emph{fixed}. $k$ will be chosen by an application of Theorem 5.3. $\varepsilon_0$ will be chosen at the end of the proof of Proposition 6.5. $\varepsilon_0$ will be chosen small compared to $\lambda$, and $\varepsilon$ will be chosen small compared to $\varepsilon_0$, $\lambda$, $\delta$ and every other constant that might appear in the proof. \\

The only variables are $\xi$ and $n$, but they are related. We think of $\xi$ as a large enough variable, $n$ will be depending on $\xi$ with a relation of the form $ n \simeq \ln \xi $. \\

We prove the following reduction.

\begin{proposition}

Define $$J_n := \{ e^{\varepsilon_0 n/2} \leq |\eta| \leq  e^{2 \varepsilon_0 n} \}$$ and $$  \zeta_{j,\mathbf{A}}(\mathbf{b}) := e^{2 \lambda n} g_{\mathbf{a}_{j-1}' \mathbf{b}}'(x_{\mathbf{a}_j}) $$
for some choice of $x_\mathbf{a} \in \text{int}_J P_{\mathbf{a}}$ for any finite admissible words $\mathbf{a}$.
There exists a constant $\alpha>0$ such that, for $ |\xi| \simeq e^{(2k+1) \lambda n} e^{\varepsilon_0 n} $ and $n$ large enough depending on $\varepsilon$:
$$ e^{- \varepsilon  \alpha n} |\widehat{\mu}(\xi)|^2 \lesssim e^{-\lambda \delta (2k+1) n} \sum_{\mathbf{A} \in \mathcal{R}_{n+1}^{k+1}} \sup_{\eta \in J_n} \Bigg{|} \underset{\mathbf{A} \leftrightarrow \mathbf{B}}{\sum_{\mathbf{B} \in \mathcal{R}_{n+1}^k}} e^{2 i \pi \text{Re}\left( \eta \zeta_{1,\mathbf{A}}(\mathbf{b}_1) \dots \zeta_{k,\mathbf{A}}(\mathbf{b}_k) \right)} \Bigg| $$
$$ \quad \quad  \quad \quad  \quad \quad  \quad \quad +  e^{- \varepsilon  \alpha n} \mu(J \setminus R_{n+1}^{2k+1}(\varepsilon) )^2 + \kappa^{-2n} + e^{-(\lambda-\varepsilon_0) n } + e^{- \varepsilon_0 \delta_{AD}n/2} .$$
\end{proposition}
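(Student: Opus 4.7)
The plan is to start from the given expansion
$$\widehat{\mu}(\xi) = \sum_{\mathbf{A} \leftrightarrow \mathbf{B}} \int_{P_{b(\mathbf{A})}} e^{-2i\pi \text{Re}(\overline{\xi}\, g_{\mathbf{A}*\mathbf{B}}(x))}\, w_{\mathbf{A}*\mathbf{B}}(x)\, d\mu(x)$$
and to massage it into the claimed form via four controlled approximations. First I split into regular and non-regular contributions: since $|I_{\mathbf{A},\mathbf{B}}| \leq \int w_{\mathbf{A}*\mathbf{B}}\, d\mu = \mu(P_{\mathbf{A}*\mathbf{B}})$, the non-regular part is bounded by $\mu(J \setminus R_{n+1}^{2k+1}(\varepsilon))$. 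Writing $\widehat{\mu} = \widehat{\mu}_\mathrm{reg} + \widehat{\mu}_\mathrm{irr}$, using $|\widehat{\mu}|^2 \leq 2|\widehat{\mu}_\mathrm{reg}|^2 + 2|\widehat{\mu}_\mathrm{irr}|^2$ and the trivial estimate $|\widehat{\mu}_\mathrm{reg}|^2 \leq |\widehat{\mu}_\mathrm{reg}|$ produces the squared non-regular error term $\mu(J \setminus R_{n+1}^{2k+1})^2$ on the right, with the $e^{-\varepsilon\alpha n}$ prefactor forced by matching the prefactor carried by the main term. The problem is thus reduced to bounding $\sum_{\mathbf{A} \in \mathcal{R}_{n+1}^{k+1}(\varepsilon)} |X_\mathbf{A}|$, where $X_\mathbf{A} := \sum_{\mathbf{B} \in \mathcal{R}_{n+1}^k,\, \mathbf{A} \leftrightarrow \mathbf{B}} I_{\mathbf{A},\mathbf{B}}$.

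For each regular $\mathbf{A}$, the heart of the proof is to factorize the phase so as to exhibit the product $\zeta_{1,\mathbf{A}}(\mathbf{b}_1)\cdots\zeta_{k,\mathbf{A}}(\mathbf{b}_k)$. Fix an anchor $x_0 \in P_{b(\mathbf{A})}$, decompose $g_{\mathbf{A}*\mathbf{B}} = g_{\mathbf{a}_0'\mathbf{b}_1} \circ g_{\mathbf{a}_1'\mathbf{b}_2} \circ \cdots \circ g_{\mathbf{a}_{k-1}'\mathbf{b}_k} \circ g_{\mathbf{a}_k}$, and apply the chain rule: one gets $g'_{\mathbf{A}*\mathbf{B}}(x_0) = g'_{\mathbf{a}_k}(x_0) \prod_{j=1}^k g'_{\mathbf{a}_{j-1}'\mathbf{b}_j}(y_j)$, with each intermediate point $y_j$ lying in $P_{\mathbf{a}_j'}$ at distance $\lesssim e^{-\lambda n}$ from $x_{\mathbf{a}_j}$ inside the domain disc $D_{b(\mathbf{b}_j)}$ of size $O(1)$. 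Koebe's distortion theorem applied to each $g_{\mathbf{a}_{j-1}'\mathbf{b}_j}$ substitutes $x_{\mathbf{a}_j}$ for $y_j$ at the cost of a multiplicative factor $(1 + O(e^{-\lambda n}))$, yielding
$$g'_{\mathbf{A}*\mathbf{B}}(x_0) = e^{-2k\lambda n}\, \zeta_{1,\mathbf{A}}(\mathbf{b}_1) \cdots \zeta_{k,\mathbf{A}}(\mathbf{b}_k)\, g'_{\mathbf{a}_k}(x_0)$$
up to a factor $(1 + O(k e^{-\lambda n}))$ absorbed in the $e^{\varepsilon \alpha n}$ prefactor. Linearizing $g_{\mathbf{A}*\mathbf{B}}(x) = g_{\mathbf{A}*\mathbf{B}}(x_0) + g'_{\mathbf{A}*\mathbf{B}}(x_0)(x-x_0) + R(x)$ around $x_0$ produces the target phase $\text{Re}(\eta(x)\, \zeta_{1,\mathbf{A}}(\mathbf{b}_1) \cdots \zeta_{k,\mathbf{A}}(\mathbf{b}_k))$ with $\eta(x) := \overline\xi\, e^{-2k\lambda n} g'_{\mathbf{a}_k}(x_0)(x-x_0)$; the quadratic Koebe remainder gives a phase error $\lesssim e^{\varepsilon_0 n}|x-x_0|^2$, which integrates to the $e^{-(\lambda-\varepsilon_0)n}$ term; and replacing $w_{\mathbf{A}*\mathbf{B}}(x)$ by its value at $x_0$ via the Bowen/Gibbs distortion property of $\varphi$ yields the $\kappa^{-2n}$ term.

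Finally, Proposition~2.8 (Ahlfors--David regularity) handles the remaining $x$-integral. A direct computation shows $\eta(x) \in J_n = [e^{\varepsilon_0 n/2}, e^{2\varepsilon_0 n}]$ precisely when $|x - x_0|$ lies in the corresponding annulus of inner radius $\sim e^{-\varepsilon_0 n/2}$; the excluded inner disc has $\mu$-mass $\lesssim e^{-\varepsilon_0 \delta_{AD} n/2}$, accounting for the last error term. On the complement one dominates the integrand by $\sup_{\eta \in J_n}\,\bigl|\sum_\mathbf{B} e^{2i\pi\text{Re}(\eta\, \zeta_{1,\mathbf{A}}(\mathbf{b}_1) \cdots \zeta_{k,\mathbf{A}}(\mathbf{b}_k))}\bigr|$, and combining with the Gibbs estimate $w_{\mathbf{A}*\mathbf{B}} \simeq e^{-\lambda \delta (2k+1) n}$ and $\mu(P_{b(\mathbf{A})}) = O(1)$ gives the stated bound after summation over $\mathbf{A}$.

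The main obstacle is the treatment of the $\mathbf{B}$-dependent constant phase $\text{Re}(\overline\xi\, g_{\mathbf{A}*\mathbf{B}}(x_0))$ arising from the linearization: its modulus is $\gtrsim e^{\lambda n}$, and an iterated-linearization expansion of $g_{\mathbf{A}*\mathbf{B}}(x_0)$ around the base-points $x_{\mathbf{a}_j}$ produces only partial products $\zeta_{1,\mathbf{A}}(\mathbf{b}_1)\cdots\zeta_{j-1,\mathbf{A}}(\mathbf{b}_{j-1})$ times correction terms that do not fit the clean product structure $\eta\, \zeta_1 \cdots \zeta_k$. The intended resolution is to discard this constant by pulling the modulus inside the $\mathbf{B}$-sum at the right step and to exploit the flexibility of the supremum over $\eta \in J_n$; verifying that this maneuver is compatible with the sum-product input needed in Section~5, while keeping all Koebe constants uniform across the $k$ compositions, is the technical heart of the argument. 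The structural reason that $|\widehat{\mu}|^2$ appears on the left-hand side (rather than $|\widehat{\mu}|$) is precisely the first step above: squaring the triangle inequality $|\widehat{\mu}| \leq |\widehat{\mu}_\mathrm{reg}| + |\widehat{\mu}_\mathrm{irr}|$ is what forces the squared non-regular error term on the right.
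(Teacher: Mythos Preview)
Your proposal correctly identifies the main obstacle but does not overcome it. The $\mathbf{B}$-dependent constant phase $\text{Re}\bigl(\overline{\xi}\, g_{\mathbf{A}*\mathbf{B}}(x_0)\bigr)$ cannot be ``discarded by pulling the modulus inside the $\mathbf{B}$-sum'': the entire point of the reduction is to keep the $\mathbf{B}$-sum \emph{inside} the modulus so that the sum-product phenomenon of Section~5 can be invoked; pulling the modulus inside collapses everything to the trivial bound $\#\{\mathbf{B}\}$. Nor can this constant be absorbed into the supremum over $\eta\in J_n$, since it does not factor as $\eta\,\zeta_1\cdots\zeta_k$ (it is of size $\sim e^{\lambda n}$ and has no product structure in the $\mathbf{b}_j$'s). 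Your explanation of why $|\widehat{\mu}|^2$ appears is accordingly off: squaring the regular/irregular split is not the structural reason.

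The paper's resolution is to square \emph{before} linearizing, and this is exactly what kills the constant. After separating regular words and replacing $w_{\mathbf{A}*\mathbf{B}}(x)$ by $w_{\mathbf{A}\#\mathbf{B}}(x_{\mathbf{a}_k})\,w_{\mathbf{a}_k}(x)$ (Gibbs distortion, producing the $\kappa^{-2n}$ term), one applies Cauchy--Schwarz in the $(\mathbf{A},\mathbf{B})$-sum to get
\[
\Bigl|\sum_{\mathbf{A},\mathbf{B}} w_{\mathbf{A}\#\mathbf{B}}(x_{\mathbf{a}_k}) \int \cdots\Bigr|^2
\;\leq\; \Bigl(\sum w_{\mathbf{A}\#\mathbf{B}}^2\Bigr)\sum_{\mathbf{A},\mathbf{B}} \Bigl|\int_{P_{b(\mathbf{A})}} e^{-2i\pi\text{Re}(\overline{\xi}\,g_{\mathbf{A}*\mathbf{B}}(x))}\,w_{\mathbf{a}_k}(x)\,d\mu(x)\Bigr|^2.
\]
Expanding the squared integral as a double integral over $(x,y)\in P_{b(\mathbf{A})}^2$ makes the phase a \emph{difference} $\overline{\xi}\bigl(g_{\mathbf{A}*\mathbf{B}}(x)-g_{\mathbf{A}*\mathbf{B}}(y)\bigr)$. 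Now linearize this difference around $x_{\mathbf{a}_k}$: the problematic constant $g_{\mathbf{A}*\mathbf{B}}(x_0)$ cancels exactly, leaving $g'_{\mathbf{A}\#\mathbf{B}}(x_{\mathbf{a}_k})\bigl(g_{\mathbf{a}_k}(x)-g_{\mathbf{a}_k}(y)\bigr)$, and the chain-rule factorisation of $g'_{\mathbf{A}\#\mathbf{B}}(x_{\mathbf{a}_k})$ produces $\eta(x,y)\,\zeta_{1,\mathbf{A}}(\mathbf{b}_1)\cdots\zeta_{k,\mathbf{A}}(\mathbf{b}_k)$ cleanly. The $e^{-(\lambda-\varepsilon_0)n}$ and $e^{-\varepsilon_0\delta_{AD}n/2}$ terms then arise as you describe, but from the double integral in $(x,y)$ rather than from a single $x$-integral. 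The squaring is thus not cosmetic: it is the mechanism that eliminates the obstacle you flagged.
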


Once Proposition 4.1 is established, if we manage to prove that the sum of exponentials enjoys exponential decay in $n$, then choosing $\varepsilon$ small enough will allow us to see that $|\widehat{\mu}(\xi)|^2$ enjoys polynomial decay in $\xi$, and Theorem 1.4 will be proved. We prove Proposition 4.1 through a succession of lemmas.

\begin{lemma}

$$ |\widehat{\mu}(\xi)|^2 \lesssim \Bigg{|} \underset{\mathbf{B} \in \mathcal{R}_{n+1}^k}{\underset{\mathbf{A} \in \mathcal{R}_{n+1}^{k+1} }{\sum_{\mathbf{A} \leftrightarrow \mathbf{B}}}} \int_{P_{b(\mathbf{A})}} e^{-2 i \pi \text{Re}\left( \overline{\xi} g_{\mathbf{A} * \mathbf{B}}(x) \right)} w_{\mathbf{A} * \mathbf{B}}(x) d\mu(x) \Bigg{|}^2 + \mu(J \setminus R_{n+1}^{2k+1}(\varepsilon) )^2 .$$

\end{lemma}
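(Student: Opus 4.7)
Proof plan. I start from the formula displayed just above the statement, namely
$$ \widehat{\mu}(\xi) = \sum_{\mathbf{A} \leftrightarrow \mathbf{B}} \int_{P_{b(\mathbf{A})}} e^{-2 i \pi \mathrm{Re}\left(\overline{\xi}\, g_{\mathbf{A}*\mathbf{B}}(x)\right)} w_{\mathbf{A}*\mathbf{B}}(x)\, d\mu(x), $$
the sum running over all compatible pairs $(\mathbf{A},\mathbf{B}) \in \mathcal{W}_{n+1}^{k+1} \times \mathcal{W}_{n+1}^{k}$. The map $(\mathbf{A},\mathbf{B}) \mapsto \mathbf{A}*\mathbf{B}$ is a bijection onto $\mathcal{W}_{(2k+1)n+1}$, and unwinding the definition of the $k$--block regular words shows that $\mathbf{A} \in \mathcal{R}_{n+1}^{k+1}$ and $\mathbf{B} \in \mathcal{R}_{n+1}^k$ if and only if the concatenation $\mathbf{A}*\mathbf{B}$ lies in $\mathcal{R}_{n+1}^{2k+1}$. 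This is the bookkeeping that makes the two ways of indexing regularity agree.

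I split $\widehat{\mu}(\xi) = S_1 + S_2$, where $S_1$ keeps only regular pairs and $S_2$ collects the rest. Since the matching pieces are exactly those that appear on the right-hand side of the claimed inequality, it suffices to dominate $|S_2|$ by $\mu(J \setminus R_{n+1}^{2k+1}(\varepsilon))$ and then apply $|a+b|^2 \leq 2|a|^2 + 2|b|^2$, absorbing the constant into $\lesssim$.

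To bound $|S_2|$, I use $|e^{i\theta}| = 1$ and the positivity of $w_{\mathbf{A}*\mathbf{B}}$, so that
$$ |S_2| \leq \sum_{\mathbf{a}^* \in \mathcal{W}_{(2k+1)n+1} \setminus \mathcal{R}_{n+1}^{2k+1}(\varepsilon)} \int_{P_{b(\mathbf{a}^*)}} w_{\mathbf{a}^*}(x)\, d\mu(x). $$
Because $\varphi$ is normalized, $\mathcal{L}_\varphi^* \mu = \mu$; unfolding $\mu(P_{\mathbf{a}^*}) = \int_J \mathbf{1}_{P_{\mathbf{a}^*}} d\mu = \int_J \mathcal{L}_\varphi^{(2k+1)n} \mathbf{1}_{P_{\mathbf{a}^*}}\, d\mu$ with the iterated transfer operator formula, only the single inverse branch $g_{\mathbf{a}^*}$ contributes on $P_{b(\mathbf{a}^*)}$, giving the identity
$$ \mu(P_{\mathbf{a}^*}) = \int_{P_{b(\mathbf{a}^*)}} w_{\mathbf{a}^*}(x)\, d\mu(x). $$
Summing this identity over all non-regular words and invoking the partition decomposition recalled at the end of Section 2.3 (which has full $\mu$-measure because $\mu$ charges no piece boundary) yields
$$ |S_2| \leq \sum_{\mathbf{a}^* \notin \mathcal{R}_{n+1}^{2k+1}(\varepsilon)} \mu(P_{\mathbf{a}^*}) = \mu\big(J \setminus R_{n+1}^{2k+1}(\varepsilon)\big), $$
and the lemma follows from $|\widehat{\mu}(\xi)|^2 \leq 2|S_1|^2 + 2|S_2|^2$.

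The only conceptually delicate point is the bijective identification between compatible tuples $(\mathbf{A},\mathbf{B})$ and admissible words of length $(2k+1)n+1$, together with the observation that regularity is preserved under this identification. Once that is settled, the estimate is purely an application of the triangle inequality and the $\mathcal{L}_\varphi^*$-invariance of $\mu$; there is no analytic obstacle to overcome at this stage, which is reassuring since the true difficulty of Proposition~4.1 will be postponed to the next lemmas, where the Cauchy--Schwarz step and the approximation of the integrand by the linearized phase $\mathrm{Re}(\eta\, \zeta_{1,\mathbf{A}}\cdots\zeta_{k,\mathbf{A}})$ must be carried out.
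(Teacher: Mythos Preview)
Your proof is correct and follows essentially the same approach as the paper: split into regular and non-regular blocks, bound the oscillatory integral trivially on the non-regular part, and reduce to $\sum_{\mathbf{C}\notin\mathcal{R}_{n+1}^{2k+1}}\mu(P_{\mathbf{C}})$. The only (minor) difference is that you derive the exact identity $\mu(P_{\mathbf{a}^*})=\int_{P_{b(\mathbf{a}^*)}} w_{\mathbf{a}^*}\,d\mu$ from $\mathcal{L}_\varphi^*\mu=\mu$, whereas the paper invokes the Gibbs estimate to get the same bound up to a constant; your route is marginally cleaner but not substantively different.
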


\begin{proof}

We have
$$ \widehat{\mu}(\xi) = \sum_{\mathbf{A} \leftrightarrow \mathbf{B}} \int_{P_{b(\mathbf{A})}} e^{-2 i \pi \text{Re}\left( \overline{\xi} g_{\mathbf{A} * \mathbf{B}}(x) \right)} w_{\mathbf{A} * \mathbf{B}}(x) d\mu(x) .$$
We are only interested on blocks $\mathbf{A}$ and $\mathbf{B}$ that allow us to get some control on the different quantities that will appear: those are the regular words. We have:
$$ \widehat{\mu}(\xi) = \underset{\mathbf{B} \in \mathcal{R}_{n+1}^k}{\underset{\mathbf{A} \in \mathcal{R}_{n+1}^{k+1} }{\sum_{\mathbf{A} \leftrightarrow \mathbf{B}}}} \int_{P_{b(\mathbf{A})}} e^{-2 i \pi \text{Re}\left( \overline{\xi} g_{\mathbf{A} * \mathbf{B}}(x) \right)} w_{\mathbf{A} * \mathbf{B}}(x) d\mu(x) +  \underset{\text{or} \ \mathbf{B} \notin \mathcal{R}_{n+1}^k}{\underset{\mathbf{A} \notin \mathcal{R}_{n+1}^{k+1} }{\sum_{\mathbf{A} \leftrightarrow \mathbf{B}}}} \int_{P_{b(\mathbf{A})}} e^{-2 i \pi \text{Re}\left( \overline{\xi} g_{\mathbf{A} * \mathbf{B}}(x) \right)} w_{\mathbf{A} * \mathbf{B}}(x) d\mu(x) $$

where we see blocks in $\mathcal{R}_{n+1}^k$ as blocks in $\mathcal{W}_{n+1}^k$ in the obvious way.
We can bound the contribution of the non-regular part by

$$ \Bigg{|} \underset{\text{or} \ \mathbf{B} \notin \mathcal{R}_{n+1}^k}{\underset{\mathbf{A} \notin \mathcal{R}_{n+1}^{k+1} }{\sum_{\mathbf{A} \leftrightarrow \mathbf{B}}}} \int_{P_{b(\mathbf{A})}} e^{-2 i \pi \text{Re}\left( \overline{\xi} g_{\mathbf{A} * \mathbf{B}}(x) \right)} w_{\mathbf{A} * \mathbf{B}}(x) d\mu(x) \Bigg{|} \leq \sum_{\mathbf{C} \notin \mathcal{R}_{n+1}^{2k+1}} \int_{P_{b(\mathbf{C})}} w_{\mathbf{C}} d\mu $$
$$ \lesssim \sum_{\mathbf{C} \notin \mathcal{R}_{n+1}^{2k+1}} \mu(P_{\mathbf{C}} ) \leq \mu\left( J \setminus R_{n+1}^{2k+1}(\varepsilon) \right), $$
where we used the fact that $\mu$ is a Gibbs measure.
Once $\varepsilon$ will be fixed, this term will enjoy exponential decay in $n$, thanks to Proposition 3.4. 
\end{proof}

\begin{lemma}

There exists some constant $\alpha>0$ such that, for $n \geq n_2(\varepsilon)$ :

$$ e^{-\varepsilon \alpha n}  \Bigg{|} \underset{\mathbf{B} \in \mathcal{R}_{n+1}^k}{\underset{\mathbf{A} \in \mathcal{R}_{n+1}^{k+1} }{\sum_{\mathbf{A} \leftrightarrow \mathbf{B}}}} \int_{P_{b(\mathbf{A})}} e^{-2 i \pi \text{Re}\left( \overline{\xi} g_{\mathbf{A} * \mathbf{B}}(x) \right)} w_{\mathbf{A} * \mathbf{B}}(x) d\mu(x) \Bigg{|}^2 \quad \quad \quad \quad \quad \quad \quad \quad \quad \quad \quad \quad $$ $$ \quad \quad \quad \quad  \lesssim e^{\lambda \delta (2k-1) n} \underset{\mathbf{B} \in \mathcal{R}_{n+1}^k}{\underset{\mathbf{A} \in \mathcal{R}_{n+1}^{k+1} }{\sum_{\mathbf{A} \leftrightarrow \mathbf{B}}}} \left| \int_{P_{b(\mathbf{A})}} e^{-2 i \pi \text{Re}\left( \overline{\xi} g_{\mathbf{A} * \mathbf{B}}(x) \right)}   w_{\mathbf{a}_k}(x) d\mu(x) \right|^2 + \kappa^{-2n}. $$

\end{lemma}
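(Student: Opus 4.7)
The plan is to apply Cauchy--Schwarz to the sum over pairs $(\mathbf{A}, \mathbf{B})$ on the left, after first factoring from each weight $w_{\mathbf{A}*\mathbf{B}}$ the part depending only on the terminal block $\mathbf{a}_k$. Via the cocycle identity $S_{(2k+1)n}\varphi(y) = S_{2kn}\varphi(y) + S_n\varphi(f^{2kn} y)$ applied at $y = g_{\mathbf{A}*\mathbf{B}}(x)$, and observing that $f^{2kn}g_{\mathbf{A}*\mathbf{B}}(x) = g_{\mathbf{a}_k}(x)$, I would write
$$w_{\mathbf{A}*\mathbf{B}}(x) \;=\; \tilde w_{\mathbf{A}, \mathbf{B}}(x)\,w_{\mathbf{a}_k}(x), \qquad \tilde w_{\mathbf{A},\mathbf{B}}(x) := e^{S_{2kn}\varphi(g_{\mathbf{A}*\mathbf{B}}(x))}.$$
The role of $\tilde w_{\mathbf{A},\mathbf{B}}$ is to package the $2k$ "outer" regular sub-blocks of $\mathbf{A}*\mathbf{B}$, and the target integrals $J_{\mathbf{A},\mathbf{B}}$ will appear once $\tilde w_{\mathbf{A},\mathbf{B}}$ is treated as essentially constant in $x$.

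The next step is to show that $\tilde w_{\mathbf{A},\mathbf{B}}$ is essentially constant on $P_{b(\mathbf{A})}$. Fixing some $x_{\mathbf{A}} \in P_{b(\mathbf{A})}$, the quantity $\bigl|\log \tilde w_{\mathbf{A},\mathbf{B}}(x) - \log \tilde w_{\mathbf{A},\mathbf{B}}(x_{\mathbf{A}})\bigr|$ is controlled by the variation of $\sum_{j=0}^{2kn-1}\varphi \circ f^j$ over $P_{\mathbf{A}*\mathbf{B}}$. For each $j$, the set $f^j(P_{\mathbf{A}*\mathbf{B}})$ lies in a Markov piece of word-length $(2k+1)n+1-j$, hence of diameter $\lesssim \kappa^{-((2k+1)n-j)}$ by Remark 2.1, and the Lipschitz bound on $\varphi$ yields a geometric series summing to $\lesssim \kappa^{-n}$ uniformly in $k$. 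Combined with Proposition 3.2 applied blockwise to $\mathbf{A}*\mathbf{B} \in \mathcal{R}_{n+1}^{2k+1}$, this delivers $\tilde w_{\mathbf{A},\mathbf{B}}(x_{\mathbf{A}}) \sim e^{-2k\delta\lambda n}$ together with $\tilde w_{\mathbf{A},\mathbf{B}}(x) = \tilde w_{\mathbf{A},\mathbf{B}}(x_{\mathbf{A}})(1 + O(\kappa^{-n}))$, where $\sim$ and $O$ absorb factors of $e^{\pm \varepsilon \alpha n}$.

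This produces the decomposition $I_{\mathbf{A},\mathbf{B}} = \tilde w_{\mathbf{A},\mathbf{B}}(x_{\mathbf{A}})\, J_{\mathbf{A},\mathbf{B}} + E_{\mathbf{A},\mathbf{B}}$ with $|E_{\mathbf{A},\mathbf{B}}| \lesssim \kappa^{-n} e^{-(2k+1)\delta\lambda n} e^{\varepsilon\alpha n}$. Using the cardinality estimate $\#\{(\mathbf{A},\mathbf{B}): \mathbf{A}\leftrightarrow\mathbf{B}\} \leq \#\mathcal{R}_{n+1}^{2k+1} \sim e^{(2k+1)\delta\lambda n}$ from Proposition 3.7, Cauchy--Schwarz on the main sum gives
$$\left|\sum \tilde w_{\mathbf{A},\mathbf{B}}(x_{\mathbf{A}}) J_{\mathbf{A},\mathbf{B}}\right|^2 \;\lesssim\; e^{-(2k-1)\delta\lambda n} e^{\varepsilon\alpha n}\sum |J_{\mathbf{A},\mathbf{B}}|^2,$$
which is a fortiori bounded by $e^{(2k-1)\delta\lambda n} e^{\varepsilon\alpha n} \sum|J_{\mathbf{A},\mathbf{B}}|^2$. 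The error sum is crudely bounded as $|\sum E_{\mathbf{A},\mathbf{B}}|^2 \leq (\#\text{pairs})^2 \max|E_{\mathbf{A},\mathbf{B}}|^2 \lesssim \kappa^{-2n} e^{\varepsilon\alpha n}$. The lemma follows from $|a+b|^2 \leq 2|a|^2 + 2|b|^2$ after multiplication by $e^{-\varepsilon\alpha n}$ and absorbing multiplicative constants into $\alpha$.

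The main obstacle is securing the uniform-in-$k$ variation bound $\lesssim \kappa^{-n}$ for $S_{2kn}\varphi$ on $P_{\mathbf{A}*\mathbf{B}}$: a naive accumulation of Lipschitz contributions over $2kn$ iterates would grow with $k$, but the geometric shrinking of Markov pieces under forward iteration telescopes the sum into a convergent series dominated by its first term of size $\kappa^{-n}$. The rest is routine bookkeeping of exponents.
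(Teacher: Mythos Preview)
Your proof is correct and follows essentially the same approach as the paper: factor $w_{\mathbf{A}*\mathbf{B}}(x) = w_{\mathbf{A}\#\mathbf{B}}(g_{\mathbf{a}_k}(x))\,w_{\mathbf{a}_k}(x)$, freeze the first factor at a reference point via the exponentially decreasing variations of $\varphi$ (the geometric series giving $\lesssim \kappa^{-n}$), then apply Cauchy--Schwarz with weights $\sim e^{-2k\delta\lambda n}$ and cardinality $\sim e^{(2k+1)\delta\lambda n}$. The only cosmetic difference is that the paper freezes at a point $x_{\mathbf{a}_k}\in P_{\mathbf{a}_k}$ rather than at $x_{\mathbf{A}}\in P_{b(\mathbf{A})}$, and its computation in fact yields the sharper factor $e^{-(2k-1)\delta\lambda n}$ that you also obtained before your ``a fortiori'' step.
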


\begin{proof}

Notice that $w_{\mathbf{A} * \mathbf{B}}(x)$ and $w_{\mathbf{a}_k}(x)$ are related by
$$ w_{\mathbf{A} * \mathbf{B}}(x) = w_{\mathbf{A} \# \mathbf{B}}(g_{\mathbf{a}_k}(x)) w_{\mathbf{a}_k}(x) .$$
For each admissible word $\mathbf{a}$ of any length, fix once and for all a point $x_\mathbf{a} \in \text{int}_J P_{\mathbf{a}}$.
To get the term $w_{\mathbf{A} \# \mathbf{B}}(g_{\mathbf{a}_k}(x))$ out of the integral, we will compare it to $w_{\mathbf{A} \# \mathbf{B}}(x_{\mathbf{a}_k})$.
Recall that $\varphi$ has exponentially decreasing variations: we can write
$$ \max_{ \textbf{a} \in \mathcal{W}_n } \sup_{x,y \in P_{\textbf{a}}} |\varphi(x) - \varphi(y)| \lesssim \kappa^{-n} .$$
So we can write:
$$  \frac{w_{\mathbf{A} \# \mathbf{B}}(g_{\mathbf{a}_k}(x))}{w_{\mathbf{A} \# \mathbf{B}}(x_{\mathbf{a}_k})} = \exp \left( S_{2nk}\varphi(g_{\mathbf{A} \# \mathbf{B}}(g_{\mathbf{a}_k}(x))) - S_{2kn}\varphi( g_{\mathbf{A} \# \mathbf{B}}(x_{\mathbf{a}_k})) \right) ,$$
with 
$$ \left| S_{2nk}\varphi(g_{\mathbf{A} \# \mathbf{B}}(g_{\mathbf{a}_k}(x))) - S_{2kn}\varphi( g_{\mathbf{A} \# \mathbf{B}}(x_{\mathbf{a}_k}))  \right| \lesssim \sum_{j=0}^{2nk-1} \kappa^{- n(2k+1) + j } \lesssim \kappa^{-n} .$$
Hence, there exists some constant $C>0$ such that
$$ e^{-C \kappa^{-n}} w_{\mathbf{A} \# \mathbf{B}}(x_{\mathbf{a}_k}) \leq w_{\mathbf{A} \# \mathbf{B}}(g_{\mathbf{a}_k}(x)) \leq e^{C \kappa^{-n}} w_{\mathbf{A} \# \mathbf{B}}(x_{\mathbf{a}_k}) ,$$
which gives:
$$ \left| w_{\mathbf{A} \# \mathbf{B}}(g_{\mathbf{a}_k}(x)) -  w_{\mathbf{A} \# \mathbf{B}}(x_{\mathbf{a}_k})  \right| \leq \max\left| e^{\pm C \kappa^{-n}} -1 \right|  w_{\mathbf{A} \# \mathbf{B}}(x_{\mathbf{a}_k}) \lesssim \kappa^{-n} w_{\mathbf{A} \# \mathbf{B}}(x_{\mathbf{a}_k}) .$$
From this, we get that
$$ \Bigg{|} \underset{\mathbf{B} \in \mathcal{R}_{n+1}^k}{\underset{\mathbf{A} \in \mathcal{R}_{n+1}^{k+1} }{\sum_{\mathbf{A} \leftrightarrow \mathbf{B}}}} \int_{P_{b(\mathbf{A})}} e^{-2 i \pi \text{Re}\left( \overline{\xi} g_{\mathbf{A} * \mathbf{B}}(x) \right)} \left(w_{\mathbf{A} * \mathbf{B}}(x) - w_{\mathbf{A} \# \mathbf{B}}(x_{\mathbf{a}_k}) w_{\mathbf{a}_k}(x) \right) d\mu(x) \Bigg{|}   $$
$$ \leq \underset{\mathbf{B} \in \mathcal{R}_{n+1}^k}{\underset{\mathbf{A} \in \mathcal{R}_{n+1}^{k+1} }{\sum_{\mathbf{A} \leftrightarrow \mathbf{B}}}} \int_{P_{b(\mathbf{A})}} \left| \left(w_{\mathbf{A} * \mathbf{B}}(x) - w_{\mathbf{A} \# \mathbf{B}}(x_{\mathbf{a}_k}) w_{\mathbf{a}_k}(x) \right) \right| d\mu(x) $$
$$ \lesssim \kappa^{-n} \underset{\mathbf{B} \in \mathcal{R}_{n+1}^k}{\underset{\mathbf{A} \in \mathcal{R}_{n+1}^{k+1} }{\sum_{\mathbf{A} \leftrightarrow \mathbf{B}}}} \int_{P_{b(\mathbf{A})}}  w_{\mathbf{A} \# \mathbf{B}}(x_{\mathbf{a}_k}) w_{\mathbf{a}_k}(x) d\mu(x) \ \lesssim e^{\varepsilon \alpha n} \kappa^{-n} $$
for some positive constant $\alpha$, by Proposition 3.1 and 3.4.
Moreover, by Cauchy-Schwartz,
$$ \Bigg{|} \underset{\mathbf{B} \in \mathcal{R}_{n+1}^k}{\underset{\mathbf{A} \in \mathcal{R}_{n+1}^{k+1} }{\sum_{\mathbf{A} \leftrightarrow \mathbf{B}}}} \int_{P_{b(\mathbf{A})}} e^{-2 i \pi \text{Re}\left( \overline{\xi} g_{\mathbf{A} * \mathbf{B}}(x) \right)}  w_{\mathbf{A} \# \mathbf{B}}(x_{\mathbf{a}_k}) w_{\mathbf{a}_k}(x) d\mu(x) \Bigg{|}^2   $$
$$ = \Bigg{|} \underset{\mathbf{B} \in \mathcal{R}_{n+1}^k}{\underset{\mathbf{A} \in \mathcal{R}_{n+1}^{k+1} }{\sum_{\mathbf{A} \leftrightarrow \mathbf{B}}}} w_{\mathbf{A} \# \mathbf{B}}(x_{\mathbf{a}_k}) \int_{P_{b(\mathbf{A})}} e^{-2 i \pi \text{Re}\left( \overline{\xi} g_{\mathbf{A} * \mathbf{B}}(x) \right)}   w_{\mathbf{a}_k}(x) d\mu(x) \Bigg{|}^2  $$
$$ \leq  \underset{\mathbf{B} \in \mathcal{R}_{n+1}^k}{\underset{\mathbf{A} \in \mathcal{R}_{n+1}^{k+1} }{\sum_{\mathbf{A} \leftrightarrow \mathbf{B}}}} w_{\mathbf{A} \# \mathbf{B}}(x_{\mathbf{a}_k})^2 \underset{\mathbf{B} \in \mathcal{R}_{n+1}^k}{\underset{\mathbf{A} \in \mathcal{R}_{n+1}^{k+1} }{\sum_{\mathbf{A} \leftrightarrow \mathbf{B}}}} \left| \int_{P_{b(\mathbf{A})}} e^{-2 i \pi \text{Re}\left( \overline{\xi} g_{\mathbf{A} * \mathbf{B}}(x) \right)}   w_{\mathbf{a}_k}(x) d\mu(x) \right|^2 $$
$$  \lesssim e^{\varepsilon \alpha n} e^{-\lambda \delta (2k-1) n} \underset{\mathbf{B} \in \mathcal{R}_{n+1}^k}{\underset{\mathbf{A} \in \mathcal{R}_{n+1}^{k+1} }{\sum_{\mathbf{A} \leftrightarrow \mathbf{B}}}} \left| \int_{P_{b(\mathbf{A})}} e^{-2 i \pi \text{Re}\left( \overline{\xi} g_{\mathbf{A} * \mathbf{B}}(x) \right)}   w_{\mathbf{a}_k}(x) d\mu(x) \right|^2 ,$$
by Proposition 3.1 and 3.4, where one could increase $\alpha$ if necessary.
\end{proof}

\begin{lemma}
Define $$ \zeta_{j,\mathbf{A}}(\mathbf{b}) = e^{2 \lambda n} g_{\mathbf{a}_{j-1}' \mathbf{b}}'(x_{\mathbf{a}_j})$$ and $$\eta(x,y) :=  \overline{\xi} \left( g_{\mathbf{a}_k}(x) - g_{\mathbf{a}_k}(y) \right) e^{-2 k \lambda n} .$$
There exists $\alpha>0$ such that, for $ |\xi| \simeq e^{(2k+1) \lambda n} e^{\varepsilon_0 n} $ and $n$ large enough depending on $\varepsilon$:
$$ e^{-\varepsilon \alpha n} e^{-\lambda \delta (2k-1) n} \underset{\mathbf{B} \in \mathcal{R}_{n+1}^k}{\underset{\mathbf{A} \in \mathcal{R}_{n+1}^{k+1} }{\sum_{\mathbf{A} \leftrightarrow \mathbf{B}}}} \left| \int_{P_{b(\mathbf{A})}} e^{-2 i \pi \text{Re}\left( \overline{\xi} g_{\mathbf{A} * \mathbf{B}}(x) \right)}   w_{\mathbf{a}_k}(x) d\mu(x) \right|^2 $$
$$ \lesssim e^{-\lambda \delta (2k+1) n} \sum_{\mathbf{A} \in \mathcal{R}_{n+1}^{k+1}} \iint_{P_{b(\mathbf{A})}^2 }  \Bigg{|} \underset{\mathbf{A} \leftrightarrow \mathbf{B}}{\sum_{\mathbf{B} \in \mathcal{R}_{n+1}^k}} e^{2 i \pi \text{Re}\left( \eta(x,y) \zeta_{1,\mathbf{A}}(\mathbf{b}_1) \dots \zeta_{k,\mathbf{A}}(\mathbf{b}_k) \right)} \Bigg| d\mu(x) d\mu(y) +  e^{-(\lambda-\varepsilon_0) n } .$$

\end{lemma}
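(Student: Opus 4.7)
The plan is to open the square, linearize the phase using Koebe distortion, and identify the resulting quantity with $\eta \cdot \zeta_1 \cdots \zeta_k$ via the chain rule.

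First I would expand each modulus-squared using $|z|^2 = z \overline{z}$:
\[
\left|\int_{P_{b(\mathbf{A})}} e^{-2i\pi \mathrm{Re}(\overline{\xi}\, g_{\mathbf{A}*\mathbf{B}}(x))}\, w_{\mathbf{a}_k}(x)\, d\mu(x)\right|^2 = \iint_{P_{b(\mathbf{A})}^2} e^{-2i\pi \mathrm{Re}(\overline{\xi}(g_{\mathbf{A}*\mathbf{B}}(x) - g_{\mathbf{A}*\mathbf{B}}(y)))}\, w_{\mathbf{a}_k}(x) w_{\mathbf{a}_k}(y)\, d\mu(x)\, d\mu(y).
\]
Since $\mathbf{A} \in \mathcal{R}_{n+1}^{k+1}$ forces $\mathbf{a}_k \in \mathcal{R}_{n+1}$, Proposition 3.3 gives $w_{\mathbf{a}_k} \sim e^{-\lambda \delta n}$ uniformly on $P_{b(\mathbf{a}_k)}$, so the weights pull out of the integrals up to a factor $e^{-2\lambda\delta n} e^{\varepsilon \alpha n}$. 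Applying Fubini, then the triangle inequality to pass $|\cdot|$ across the $\mathbf{B}$-sum (the sign in the exponent being immaterial there since $|\overline{z}| = |z|$), leaves a bound of the form $e^{-2\lambda\delta n} e^{\varepsilon \alpha n} \sum_{\mathbf{A}} \iint \bigl|\sum_{\mathbf{B}} e^{2i\pi \mathrm{Re}(\cdots)}\bigr|\, d\mu\, d\mu$; combined with the prefactor $e^{-\lambda \delta (2k-1)n}$ this yields the desired normalization $e^{-\lambda\delta(2k+1)n}$.

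The heart of the argument is the linearization. Decompose $g_{\mathbf{A}*\mathbf{B}} = G \circ g_{\mathbf{a}_k}$ where $G := g_{\mathbf{a}_0'\mathbf{b}_1'\mathbf{a}_1' \cdots \mathbf{a}_{k-1}'\mathbf{b}_k}$ is holomorphic and univalent on the fixed-size disk $D_\alpha$, with $\alpha$ the initial letter of $\mathbf{a}_k$ (that is, the last letter of $\mathbf{b}_k$). Set $u := g_{\mathbf{a}_k}(x)$, $v := g_{\mathbf{a}_k}(y)$, both in $P_{\mathbf{a}_k}$. Lemma 2.3 gives $\mathrm{Conv}(P_{\mathbf{a}_k}) \subset D_{\mathbf{a}_k} \subset D_\alpha$ with $\mathrm{diam}(D_{\mathbf{a}_k}) \sim e^{-\lambda n}$ by Proposition 3.3. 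Koebe distortion for $G$ on the fixed disk $D_\alpha$ then gives $|G'(z)/G'(x_{\mathbf{a}_k}) - 1| \lesssim e^{-\lambda n}$ uniformly on $\mathrm{Conv}(P_{\mathbf{a}_k})$. Writing $G(u) - G(v) = (u - v)\int_0^1 G'(v + t(u-v))\,dt$ and using $|\xi| \simeq e^{(2k+1)\lambda n + \varepsilon_0 n}$, $|u - v| \lesssim e^{-\lambda n}$, $|G'(x_{\mathbf{a}_k})| \simeq e^{-2k\lambda n}$, I obtain the key estimate
\[
\left|\overline{\xi}(g_{\mathbf{A}*\mathbf{B}}(x) - g_{\mathbf{A}*\mathbf{B}}(y)) - \overline{\xi}(u - v) G'(x_{\mathbf{a}_k})\right| \lesssim e^{-(\lambda - \varepsilon_0)n}.
\]

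To recognize the linearized quantity, the chain rule gives $G'(x_{\mathbf{a}_k}) = \prod_{j=1}^k g'_{\mathbf{a}_{j-1}'\mathbf{b}_j}(H_j(x_{\mathbf{a}_k}))$ with $H_j := g_{\mathbf{a}_j'\mathbf{b}_{j+1}} \circ \cdots \circ g_{\mathbf{a}_{k-1}'\mathbf{b}_k}$ (and $H_k = \mathrm{id}$). Inserting the product $\prod_j e^{2\lambda n} g'_{\mathbf{a}_{j-1}'\mathbf{b}_j}(x_{\mathbf{a}_j}) \cdot g'_{\mathbf{a}_{j-1}'\mathbf{b}_j}(x_{\mathbf{a}_j})^{-1}$ one matches directly with the definitions to get $\overline{\xi}(u-v) G'(x_{\mathbf{a}_k}) = \eta(x,y) \prod_{j=1}^k \zeta_{j,\mathbf{A}}(\mathbf{b}_j)$. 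Using $|e^{i\theta_1} - e^{i\theta_2}| \leq |\theta_1 - \theta_2|$ the substitution costs an additive error of order $e^{-(\lambda - \varepsilon_0)n}$ per term; summing with $\#\{\mathbf{B} : \mathbf{A} \leftrightarrow \mathbf{B}\} \sim e^{k \lambda \delta n}$ and $\#\mathcal{R}^{k+1}_{n+1} \sim e^{(k+1)\lambda\delta n}$, multiplying by the weights $\sim e^{-2\lambda\delta n}$ and the prefactor $e^{-\lambda\delta(2k-1)n}$, and using $\iint d\mu\, d\mu \leq 1$, produces the stated remainder $e^{-(\lambda - \varepsilon_0)n}$.

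The main obstacle is precisely the linearization step: one needs the Koebe distortion ratio to gain a factor $e^{-\lambda n}$, and this is only possible because $G$ is univalent on the unit-scale disk $D_\alpha$, whereas the relevant points lie in $P_{\mathbf{a}_k}$ at scale $e^{-\lambda n}$. If one worked inside $D_{\mathbf{a}_k}$ itself (of the same scale as $P_{\mathbf{a}_k}$) the ratio would be only $O(1)$ and the linearization would fail; the two families $(U_a), (D_a)$ together with Lemma 2.3 are exactly what lets us bypass this difficulty in the non-convex two-dimensional setting.
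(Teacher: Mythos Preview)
Your proof is correct and follows essentially the same route as the paper: open the square, strip out the weights $w_{\mathbf{a}_k}\sim e^{-\lambda\delta n}$, linearize $g_{\mathbf{A}\#\mathbf{B}}$ around $x_{\mathbf{a}_k}$ gaining a factor $e^{-\lambda n}$ because the derivative is controlled on a \emph{fixed-scale} disk while the points sit at scale $e^{-\lambda n}$, and then unfold the chain rule to produce $\eta\prod_j\zeta_{j,\mathbf{A}}(\mathbf{b}_j)$. The only cosmetic difference is that the paper obtains the second-derivative bound via Cauchy's integral formula on a disk of fixed radius $\beta$ (giving $|g_{\mathbf{A}\#\mathbf{B}}''|\lesssim\beta^{-1}\|g_{\mathbf{A}\#\mathbf{B}}'\|_\infty$), whereas you phrase it as Koebe distortion; these are equivalent here, and your appeal to Lemma~2.3 is in fact slightly more than needed, since the convexity of the single-letter set $D_{b(\mathbf{B})}$ already suffices for the integral representation.
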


\begin{proof}

We expand the integral term and use Proposition 3.1 to get

$$ e^{-\lambda \delta (2k-1) n} \underset{\mathbf{B} \in \mathcal{R}_{n+1}^k}{\underset{\mathbf{A} \in \mathcal{R}_{n+1}^{k+1} }{\sum_{\mathbf{A} \leftrightarrow \mathbf{B}}}} \iint_{P_{b(\mathbf{A})}^2}  e^{2 i \pi \text{Re}\left( \overline{\xi} \left( g_{\mathbf{A} * \mathbf{B}}(x) -  g_{\mathbf{A} * \mathbf{B}}(y) \right) \right)}   w_{\mathbf{a}_k}(x) w_{\mathbf{a}_k}(y) d\mu(x) d\mu(y)  $$
$$ \leq e^{-\lambda \delta (2k-1) n} \sum_{\mathbf{A} \in \mathcal{R}_{n+1}^{k+1}} \iint_{P_{b(\mathbf{A})}^2} w_{\mathbf{a}_k}(x) w_{\mathbf{a}_k}(y) \Bigg{|} \underset{\mathbf{A} \leftrightarrow \mathbf{B}}{\sum_{\mathbf{B} \in \mathcal{R}_{n+1}^k}} e^{2 i \pi \text{Re}\left( \overline{\xi} \left( g_{\mathbf{A} * \mathbf{B}}(x) -  g_{\mathbf{A} * \mathbf{B}}(y) \right) \right)} \Bigg| d\mu(x) d\mu(y) $$
$$ \lesssim e^{\varepsilon \alpha n}  e^{-\lambda \delta (2k+1) n} \sum_{\mathbf{A} \in \mathcal{R}_{n+1}^{k+1}} \iint_{P_{b(\mathbf{A})}^2}  \Bigg{|} \underset{\mathbf{A} \leftrightarrow \mathbf{B}}{\sum_{\mathbf{B} \in \mathcal{R}_{n+1}^k}} e^{2 i \pi \text{Re}\left( \overline{\xi} \left( g_{\mathbf{A} * \mathbf{B}}(x) -  g_{\mathbf{A} * \mathbf{B}}(y) \right) \right)} \Bigg| d\mu(x) d\mu(y) .$$

The next step is to carefully linearize the phase. Here again, the construction of the $(D_a)_{a \in \mathcal{A}}$ as convex sets is really useful. \\

Fix some $\mathbf{A} \in \mathcal{R}_{n+1}^{k+1}$. For $x,y \in P_{b(\mathbf{A})}$, set $ \widehat{x} := g_{\mathbf{a}_k}(x)$ and $\widehat{y} = g_{\mathbf{a}_k}(y)$.
These are elements of $P_{b(\mathbf{B})}$, and so $[\widehat{x},\widehat{y}] \subset D_{b(\mathbf{B})}$. Hence, the following identity makes sense:

$$ g_{\mathbf{A} * \mathbf{B}}(x) -  g_{\mathbf{A} * \mathbf{B}}(y) = g_{\mathbf{A} \# \mathbf{B}}(\widehat{x}) -  g_{\mathbf{A} \# \mathbf{B}}(\widehat{y}) = \int_{[\widehat{x},\widehat{y}]} g_{\mathbf{A} \# \mathbf{B}}'(z) dz.$$
Therefore, we get
$$ \left| g_{\mathbf{A} * \mathbf{B}}(x) -  g_{\mathbf{A} * \mathbf{B}}(y) -  g_{\mathbf{A} \# \mathbf{B}}'(x_{\mathbf{a}_k})(\widehat{x}-\widehat{y}) \right| = \left| \int_{[\widehat{x},\widehat{y}]} \left( g_{\mathbf{A} \# \mathbf{B}}'(z) - g_{\mathbf{A} \# \mathbf{B}}'(x_{\mathbf{a}_k}) \right) dz \right| .$$
Then, $z \in D_{b(\mathbf{B})}$, and so $[z,x_{\mathbf{a}_k}] \subset D_{b(\mathbf{B})}$, and the following is well defined:

$$ g_{\mathbf{A} \# \mathbf{B}}'(z) - g_{\mathbf{A} \# \mathbf{B}}'(x_{\mathbf{a}_k}) = \int_{[z,x_{\mathbf{a}_k}]} g_{\mathbf{A} \# \mathbf{B}}''(\omega) d\omega .$$
Now, notice that since the maps are holomorphic, and since there exists a $\beta>0$ such that $ P_a + B(0,2 \beta) \subset D_a $ for any $a \in \mathcal{A}$, we can write by Cauchy's formula
$$ \left| g_{\mathbf{A} \# \mathbf{B}}''(\omega) \right| = \left|\frac{1}{2 i \pi} \oint_{\mathcal{C}(\omega,\beta)} \frac{g_{\mathbf{A} \# \mathbf{B}}'(s)}{(s-\omega)^2} ds \right| \leq \frac{\|g_{\mathbf{A} \# \mathbf{B}}'\|_{\infty,\mathcal{C}(\omega,\beta)}}{\beta}  ,$$
where $\mathcal{C}(\omega,\beta)$ is a circle centered at $\omega$ with radius $\beta$.
And so 

$$|g_{\mathbf{A} \# \mathbf{B}}'(z) - g_{\mathbf{A} \# \mathbf{B}}'(x_{\mathbf{a}_k})| \lesssim \|g_{\mathbf{A} \# \mathbf{B} }' \|_{\infty,D_{b(\mathbf{B})}} |z-x_{\mathbf{a}_k}| \lesssim e^{\varepsilon \alpha n} e^{- 2 k \lambda n } e^{- \lambda n} $$

by Proposition 3.1. Hence,
$$ \left| g_{\mathbf{A} * \mathbf{B}}(x) -  g_{\mathbf{A} * \mathbf{B}}(y) -  g_{\mathbf{A} \# \mathbf{B}}'(x_{\mathbf{a}_k})(\widehat{x}-\widehat{y}) \right| = \left| \int_{[\widehat{x},\widehat{y}]} \left( g_{\mathbf{A} \# \mathbf{B}}'(z) - g_{\mathbf{A} \# \mathbf{B}}'(x_{\mathbf{a}_k}) \right) dz \right| \lesssim e^{\varepsilon \alpha n} e^{- (2 k + 2) \lambda n } .$$

Then we relate $g_{\mathbf{A} \# \mathbf{B}}'(x_{\mathbf{a}_k})$ to $g_{\mathbf{a}_0' \mathbf{b}_1}'(x_{\mathbf{a}_1}) \dots g_{\mathbf{a}_{k-1}' \mathbf{b}_k}'(x_{\mathbf{a}_k}) $, using Cauchy's formula again:
$$ \left| g_{\mathbf{A} \# \mathbf{B}}'(x_{\mathbf{a}_k}) - g_{\mathbf{a}_0' \mathbf{b}_1}'(x_{\mathbf{a}_1}) \dots g_{\mathbf{a}_{k-1}' \mathbf{b}_k}'(x_{\mathbf{a}_k}) \right| =  \left| \prod_{j=1}^k g_{\mathbf{a}_{j-1}'\mathbf{b}_j}'(g_{\mathbf{a}_j' \mathbf{b}_{j+1}' \dots \mathbf{a}_{k-1}' \mathbf{b}_k}(x_{\mathbf{a}_{k}})) - \prod_{j=1}^k g_{\mathbf{a}_{j-1}'\mathbf{b}_j}'(x_{\mathbf{a}_{j}}) \right|  $$
$$ \leq \sum_{i=0}^{k-1} \left| \prod_{j=1}^i g_{\mathbf{a}_{j-1}'\mathbf{b}_j}'(x_{\mathbf{a}_{j}})  \prod_{j=i+1}^k g_{\mathbf{a}_{j-1}'\mathbf{b}_j}'(g_{\mathbf{a}_j' \mathbf{b}_{j+1}' \dots \mathbf{b}_k}(x_{\mathbf{a}_{k}})) - \prod_{j=1}^{i+1} g_{\mathbf{a}_{j-1}'\mathbf{b}_j}'(x_{\mathbf{a}_{j}}) \prod_{j=i+2}^k g_{\mathbf{a}_{j-1}'\mathbf{b}_j}'(g_{\mathbf{a}_j' \mathbf{b}_{j+1}' \dots \mathbf{b}_k}(x_{\mathbf{a}_{k}}))   \right| $$
$$ \leq \sum_{i=0}^{k-1} e^{\varepsilon \alpha n} e^{-2 (k-1) \lambda n} | g_{\mathbf{a}_{i}'\mathbf{b}_{i+1}}'(g_{\mathbf{a}_{i+1}' \mathbf{b}_{i+2}' \dots \mathbf{b}_k}(x_{\mathbf{a}_{k}})) - g_{\mathbf{a}_{i}'\mathbf{b}_{i+1}}'(x_{\mathbf{a}_{i+1}}))| $$
$$ \lesssim e^{\varepsilon \alpha n} e^{-2 \lambda  (k-1) n} \|g_{\mathbf{a}_{i}' \mathbf{b}_{i+1}}'\|_\infty \text{diam}(P_{\mathbf{a}_{i+1}}) \lesssim e^{\varepsilon \alpha n} e^{ -(2k+1) \lambda n}  .$$

Hence, since $|\widehat{x} - \widehat{y}| \lesssim e^{\varepsilon \alpha n} e^{- \lambda n}$,
$$ \left| g_{\mathbf{A} * \mathbf{B}}(x) -  g_{\mathbf{A} * \mathbf{B}}(y) - (\widehat{x}-\widehat{y}) \prod_{j=1}^k g_{\mathbf{a}_{j-1}'\mathbf{b}_j}'(x_{\mathbf{a}_{j}})  \right| \lesssim  e^{\varepsilon \alpha n} e^{ -(2k+2) \lambda n} .$$

From this estimate, we can relate our problem to a linearized one, as follows:

 $$ \scalemath{0.9}{ e^{-\lambda \delta (2k+1) n} \sum_{\mathbf{A} \in \mathcal{R}_{n+1}^{k+1}} \iint_{P_{b(\mathbf{A})^2}}  \Bigg{|} \underset{\mathbf{A} \leftrightarrow \mathbf{B}}{\sum_{\mathbf{B} \in \mathcal{R}_{n+1}^k}} e^{2 i \pi \text{Re}\left( \overline{\xi} \left( g_{\mathbf{A} * \mathbf{B}}(x) -  g_{\mathbf{A} * \mathbf{B}}(y) \right) \right)} - e^{2 i \pi \text{Re}( \overline{\xi}   g_{\mathbf{a}_{0}'\mathbf{b}_1}'(x_{\mathbf{a}_{1}}) \dots g_{\mathbf{a}_{k-1}'\mathbf{b}_k}'(x_{\mathbf{a}_{k}}) (\widehat{x}-\widehat{y}) )} \Bigg| d\mu(x) d\mu(y) }$$
$$\scalemath{0.9}{ = e^{-\lambda \delta (2k+1) n} \sum_{\mathbf{A} \in \mathcal{R}_{n+1}^{k+1}} \iint_{P_{b(\mathbf{A})^2}} \underset{\mathbf{A} \leftrightarrow \mathbf{B}}{\sum_{\mathbf{B} \in \mathcal{R}_{n+1}^k}} \Bigg{|}  e^{2 i \pi \text{Re}\left( \overline{\xi} \left( g_{\mathbf{A} * \mathbf{B}}(x) -  g_{\mathbf{A} * \mathbf{B}}(y) - g_{\mathbf{a}_{0}'\mathbf{b}_1}'(x_{\mathbf{a}_{1}}) \dots g_{\mathbf{a}_{k-1}'\mathbf{b}_k}'(x_{\mathbf{a}_{k}})\left( \widehat{x} - \widehat{y} \right) \right) \right)} - 1 \Bigg| d\mu(x) d\mu(y) }$$
$$\scalemath{0.9}{ \lesssim e^{-\lambda \delta (2k+1) n} \sum_{\mathbf{A} \in \mathcal{R}_{n+1}^{k+1}} \iint_{P_{b(\mathbf{A})^2}} \underset{\mathbf{A} \leftrightarrow \mathbf{B}}{\sum_{\mathbf{B} \in \mathcal{R}_{n+1}^k}}  | \xi| \left| g_{\mathbf{A} * \mathbf{B}}(x) -  g_{\mathbf{A} * \mathbf{B}}(y) - g_{\mathbf{a}_{0}'\mathbf{b}_1}'(x_{\mathbf{a}_{1}}) \dots g_{\mathbf{a}_{k-1}'\mathbf{b}_k}'(x_{\mathbf{a}_{k}})( \widehat{x} - \widehat{y} ) \right|   d\mu(x) d\mu(y) }$$

$$ \lesssim e^{ \varepsilon \alpha n} e^{-\lambda \delta (2k+1) n} e^{(k+1) \delta \lambda n} e^{k \delta \lambda n}  | \xi|  e^{- (2 k + 2) \lambda n } \simeq |\xi| e^{\varepsilon \alpha n} e^{-(2k+2)\lambda n}. $$
Now we see why we need to relate $n$ to $\xi$. We follow $\cite{SS20}$ and fix

$$ e^{(2k+1) \lambda (n-1)} e^{\varepsilon_0 (n-1)} \leq |\xi| \leq e^{(2k+1) \lambda n} e^{\varepsilon_0 n} .$$
This choice will ensure that the normalized phase $\eta$ will grow at a slow pace, of order of magnitude $e^{\varepsilon_0 n}$. It will be useful in the last section, where we will finally adjust $\varepsilon_0$.
 This relationship being fixed from now on, we get:
$$ \scalemath{0.9}{ e^{- \lambda \delta (2k+1) n}  \sum_{\mathbf{A} \in \mathcal{R}_{n+1}^{k+1}} \iint_{P_{b(\mathbf{A})^2}}  \Bigg{|} \underset{\mathbf{A} \leftrightarrow \mathbf{B}}{\sum_{\mathbf{B} \in \mathcal{R}_{n+1}^k}} e^{2 i \pi \text{Re}\left( \overline{\xi} \left( g_{\mathbf{A} * \mathbf{B}}(x) -  g_{\mathbf{A} * \mathbf{B}}(y) \right) \right)} - e^{2 i \pi \text{Re}\left( \overline{\xi} g_{\mathbf{a}_{0}'\mathbf{b}_1}'(x_{\mathbf{a}_{1}}) \dots g_{\mathbf{a}_{k-1}'\mathbf{b}_k}'(x_{\mathbf{a}_{k}})(\widehat{x}-\widehat{y}) \right)} \Bigg| d\mu(x) d\mu(y)} $$ $$ \lesssim e^{\varepsilon \alpha n} e^{-(\lambda-\varepsilon_0) n} .$$

So now we can focus on the term 
$$ e^{\varepsilon \alpha n}  e^{-\lambda \delta (2k+1) n} \sum_{\mathbf{A} \in \mathcal{R}_{n+1}^{k+1}} \iint_{P_{b(\mathbf{A})}}  \Bigg{|} \underset{\mathbf{A} \leftrightarrow \mathbf{B}}{\sum_{\mathbf{B} \in \mathcal{R}_{n+1}^k}} e^{2 i \pi \text{Re}\left( \overline{\xi} g_{\mathbf{a}_{0}'\mathbf{b}_1}'(x_{\mathbf{a}_{1}}) \dots g_{\mathbf{a}_{k-1}'\mathbf{b}_k}'(x_{\mathbf{a}_{k}}) \left( \widehat{x} - \widehat{y} \right) \right)} \Bigg| d\mu(x) d\mu(y) .$$

We re-scale the phase by defining, for any $\mathbf{b}$ such that $\mathbf{a}_{j-1} \rightsquigarrow \mathbf{b} \rightsquigarrow \mathbf{a}_j$:
$$ \zeta_{j,\mathbf{A}}(\mathbf{b}) = e^{2 \lambda n} g_{\mathbf{a}_{j-1}' \mathbf{b}}'(x_{\mathbf{a}_j}) $$
So that $\zeta_{j,\mathbf{A}}(\mathbf{b}) \sim 1$. We can then write
$$ \overline{\xi} g_{\mathbf{a}_{0}'\mathbf{b}_1}'(x_{\mathbf{a}_{1}}) \dots g_{\mathbf{a}_{k-1}'\mathbf{b}_k}'(x_{\mathbf{a}_{k}}) (\widehat{x}-\widehat{y}) = \eta(x,y) \zeta_{1,\mathbf{A}}(\mathbf{b}_1) \dots \zeta_{k,\mathbf{A}}(\mathbf{b}_k)$$
where $\eta(x,y) :=  \overline{\xi} \left( \widehat{x} - \widehat{y} \right) e^{-2 k \lambda n}   $.
\end{proof}

\begin{lemma}
Define $$ J_n := \{ e^{\varepsilon_0 n/2} \leq |\eta| \leq e^{2 \varepsilon_0 n}  \} .$$
There exists $\alpha > 0$ such that, for $ |\xi| \simeq e^{(2k+1) \lambda n} e^{\varepsilon_0 n} $ and $n$ large enough depending on $\varepsilon$:

$$e^{-\varepsilon \alpha n} e^{-\lambda \delta (2k+1) n} \sum_{\mathbf{A} \in \mathcal{R}_{n+1}^{k+1}} \iint_{P_{b(\mathbf{A})}^2}  \Bigg{|} \underset{\mathbf{A} \leftrightarrow \mathbf{B}}{\sum_{\mathbf{B} \in \mathcal{R}_{n+1}^k}} e^{2 i \pi \text{Re}\left( \eta(x,y) \zeta_{1,\mathbf{A}}(\mathbf{b}_1) \dots \zeta_{k,\mathbf{A}}(\mathbf{b}_k) \right)} \Bigg| d\mu(x) d\mu(y)  $$
$$ \lesssim  e^{-\lambda \delta (2k+1) n} \sum_{\mathbf{A} \in \mathcal{R}_{n+1}^{k+1}} \sup_{\eta \in J_n} \Bigg{|} \underset{\mathbf{A} \leftrightarrow \mathbf{B}}{\sum_{\mathbf{B} \in \mathcal{R}_{n+1}^k}} e^{2 i \pi \text{Re}\left( \eta \zeta_{1,\mathbf{A}}(\mathbf{b}_1) \dots \zeta_{k,\mathbf{A}}(\mathbf{b}_k) \right)} \Bigg|  + e^{- \delta_{AD} \varepsilon_0 n/2} .$$

\end{lemma}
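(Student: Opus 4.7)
The plan is to partition $P_{b(\mathbf{A})}^2$ into a good subset, on which the inner sum is trivially bounded by the supremum over $\eta \in J_n$, and a bad subset whose $\mu \otimes \mu$-measure is controlled by the Ahlfors--David upper regularity of $\mu$ from Proposition 2.8.

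A preliminary point is that $\eta(x,y)$ as defined actually depends on $\mathbf{B}$ through the product of correction factors $g'_{\mathbf{a}_{j-1}' \mathbf{b}_j}(z_j)/g'_{\mathbf{a}_{j-1}' \mathbf{b}_j}(x_{\mathbf{a}_j})$, whereas the supremum on the right-hand side is taken over a single complex number. I would first replace $\eta(x,y)$ by its $\mathbf{B}$-independent leading part $\tilde\eta(x,y) := \overline{\xi}(\widehat{x}-\widehat{y}) e^{-2k\lambda n}$. Both evaluation points $z_j$ and $x_{\mathbf{a}_j}$ lie in the same Markov cylinder $P_{\mathbf{a}_j}$, which has diameter $\sim e^{-\lambda n}$ by Proposition 3.2, so Koebe distortion (equivalently, the Lipschitz bound on $\log|g'|$ in the spirit of Remark 2.1) yields each ratio equal to $1 + O(e^{-\lambda n + \varepsilon\alpha n})$. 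Using $|\tilde\eta|, |\eta| \lesssim e^{\varepsilon_0 n + \varepsilon\alpha n}$ and $|\zeta_j| \sim 1$, the resulting phase error $(\eta - \tilde\eta)\zeta_1 \cdots \zeta_k$ is of size $e^{-(\lambda - \varepsilon_0)n + \varepsilon\alpha n}$. After summing over $\mathbf{B}$ ($\lesssim e^{k\delta\lambda n}$ terms) and $\mathbf{A}$ ($\lesssim e^{(k+1)\delta\lambda n}$ terms) and multiplying by $e^{-\lambda \delta (2k+1)n}$, the total contribution is of order $e^{-(\lambda-\varepsilon_0)n}$, which is smaller than $e^{-\delta_{AD}\varepsilon_0 n/2}$ once $\varepsilon_0$ is chosen small enough relative to $\lambda$, and is therefore absorbed into the error term.

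With $\tilde\eta(x,y)$ in place of $\eta(x,y)$, I split according to whether $|\tilde\eta(x,y)| \geq e^{\varepsilon_0 n/2}$. The upper bound $|\tilde\eta| \leq e^{2\varepsilon_0 n}$ is automatic for $n$ large, because $|\tilde\eta| \lesssim e^{\varepsilon_0 n + \varepsilon\alpha n}|x-y| \leq e^{\varepsilon_0 n + \varepsilon\alpha n}\alpha_0$ and $\varepsilon\alpha < \varepsilon_0$. On the good set $\tilde\eta(x,y) \in J_n$, so the inner sum is dominated pointwise by $\sup_{\eta \in J_n}|\cdots|$, and the $(x,y)$-integration contributes at most $\mu(P_{b(\mathbf{A})})^2 \leq 1$, giving the main term on the right.

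On the bad set $\{(x,y) : |x-y| \leq C e^{\varepsilon\alpha n - \varepsilon_0 n/2}\}$, I bound the inner sum trivially by $\# \mathcal{R}_{n+1}^k \lesssim e^{k \delta \lambda n}$ and apply Proposition 2.8 via
$$\iint_{|x-y| \leq r} d\mu(x)\, d\mu(y) \leq \int_{P_{b(\mathbf{A})}} \mu(B(x,r))\, d\mu(x) \leq C r^{\delta_{AD}} \mu(P_{b(\mathbf{A})}).$$
Summing over $\mathbf{A}$ with $\sum_{\mathbf{A}} \mu(P_{b(\mathbf{A})}) \lesssim \#\mathcal{R}_{n+1}^{k+1} \lesssim e^{(k+1) \delta \lambda n}$ and multiplying by $e^{-\lambda \delta (2k+1)n}$, the exponents cancel exactly ($(k) + (k+1) - (2k+1) = 0$) and the estimate reduces to $r^{\delta_{AD}} \simeq e^{-\varepsilon_0 \delta_{AD} n/2}$, up to an accumulated $e^{C \varepsilon n}$ tail that is then absorbed by the $e^{-\varepsilon \alpha n}$ prefactor on the left-hand side by taking $\alpha$ sufficiently large. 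The conceptual ingredient is the Ahlfors--David estimate, which converts the thinness of the diagonal $\{x=y\}$ under $\mu\otimes\mu$ into the required exponential decay; the main bookkeeping difficulty is verifying that the various $e^{\varepsilon\alpha n}$ tails collected at each step can all be absorbed by a single large enough $\alpha$, which is precisely where the hierarchy $\varepsilon \ll \varepsilon_0 \ll \lambda$ becomes essential.
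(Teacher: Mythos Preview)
Your proof is correct and follows the paper's strategy: split $P_{b(\mathbf{A})}^2$ according to the size of $|x-y|$, dominate the good part by the supremum over $J_n$, and control the diagonal strip via the upper regularity of $\mu$ (Proposition~2.8). Your preliminary replacement of $\eta(x,y)$ by the $\mathbf{B}$-independent $\tilde\eta(x,y)=\overline{\xi}(\widehat{x}-\widehat{y})e^{-2k\lambda n}$, justified by bounded distortion and costing an $e^{-(\lambda-\varepsilon_0)n}$ error, is a genuine improvement in rigor: the paper passes directly to $\sup_{\eta\in J_n}|\sum_{\mathbf{B}}\cdots|$ as though $\eta(x,y)$ were a single complex number, which is not literally valid without a reduction like yours.
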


\begin{proof}

To estimate $\eta(x,y)$, we need to control $|\widehat{x}-\widehat{y}|$. A first inequality is easy to get by convexity of $D_{b(\mathbf{A})}$:
$$ |\widehat{x}-\widehat{y}| = \left| \int_{[x,y]} g_{\mathbf{a}}'(z) dz \right| \leq e^{\varepsilon n} e^{- \lambda n} |x-y|. $$
The other inequality is subtler to get, but we already did the hard work. Recall lemma 2.3: it tells us that $\text{Conv}(P_{\mathbf{a}_k}) \subset D_{\mathbf{a}_k}$, thanks to Koebe's quarter theorem. Consequently,  $[\widehat{x},\widehat{y}] \subset D_{\mathbf{a}_k}$, and so we can write safely that
$$ |x-y| = \left| \int_{[\widehat{x},\widehat{y}]} (f^n)'(z) dz \right| \leq e^{\varepsilon n} e^{\lambda n} |\widehat{x} - \widehat{y}| .$$
Combining this with the fact that $\zeta_{j,\mathbf{A}}(\mathbf{b}) \sim 1$ gives us the following estimate for $\eta$ : 
$$ \forall x,y \in P_{b(\mathbf{A})}, \  e^{- \varepsilon (4k+1) n} e^{\varepsilon_0 n} |x-y| \leq |\eta(x,y)| \leq e^{\varepsilon (4k+1) n} e^{\varepsilon_0 n} |x-y| .$$

Choosing $\varepsilon$ and our Markov partition small enough ensures us that 
$$ \forall x,y \in P_{b(\mathbf{A})}, \  e^{- \varepsilon (4k+1) n} e^{\varepsilon_0 n} |x-y| \leq |\eta(x,y)| \leq  e^{2 \varepsilon_0 n} .$$

Finally, we need to reduce our problem to the case where $\eta$ is not too small, so that the sum of exponential may enjoys some cancellations. For this, we use the upper regularity of $\mu$. We have, if $x \in P_{b(\mathbf{A})}$:
$$ \mu\left( \left\{ y \in J, \ |x-y| \leq  e^{\varepsilon (4k+1) n} e^{- \varepsilon_0 n/2} \right\} \right) \lesssim e^{\varepsilon (4k+1) n \delta_{AD} } e^{- \varepsilon_0 \delta_{AD}n/2} .$$

Integrating in $x$ yields 
$$ \mu \otimes \mu\left( \left\{ x,y \in J, \ |x-y| \leq  e^{\varepsilon (4k+1) n} e^{- \varepsilon_0 n/2} \right\} \right) \lesssim e^{\varepsilon (4k+1) n \delta_{AD} } e^{- \varepsilon_0 \delta_{AD}n/2} .$$
This allows us to reduce our principal integral term to the part where $\eta$ is not small. Indeed, we get:
$$ e^{-\lambda \delta (2k+1) n} \sum_{\mathbf{A} \in \mathcal{R}_{n+1}^{k+1}} \iint_{P_{b(\mathbf{A})}^2}  \Bigg{|} \underset{\mathbf{A} \leftrightarrow \mathbf{B}}{\sum_{\mathbf{B} \in \mathcal{R}_{n+1}^k}} e^{2 i \pi \text{Re}\left( \eta(x,y) \zeta_{1,\mathbf{A}}(\mathbf{b}_1) \dots \zeta_{k,\mathbf{A}}(\mathbf{b}_k) \right)} \Bigg| d\mu(x) d\mu(y)  $$
$$ \lesssim e^{-\lambda \delta (2k+1) n} \sum_{\mathbf{A} \in \mathcal{R}_{n+1}^{k+1}} \iint_{ \{ x,y \in P_{b(\mathbf{A})}, \ |x-y| >  e^{\varepsilon (4k+1) n} e^{- \varepsilon_0 n/2} \} }  \Bigg{|} \underset{\mathbf{A} \leftrightarrow \mathbf{B}}{\sum_{\mathbf{B} \in \mathcal{R}_{n+1}^k}} e^{2 i \pi \text{Re}\left( \eta(x,y) \zeta_{1,\mathbf{A}}(\mathbf{b}_1) \dots \zeta_{k,\mathbf{A}}(\mathbf{b}_k) \right)} \Bigg| d\mu(x) d\mu(y)  $$
$$ + e^{\varepsilon \alpha n} e^{\varepsilon n (4k+1) \delta_{AD} } e^{- \varepsilon_0 \delta_{AD}n/2} , $$
by the cardinality estimate on $\mathcal{R}_{n+1}^{k}$.
In this integral term, we get $$  \eta(x,y) \geq  e^{- \varepsilon (4k+1) n} e^{\varepsilon_0 n} e^{\varepsilon (4k+1) n} e^{- \varepsilon_0 n/2} = e^{\varepsilon_0 n/2}, $$
and so we can bound:
$$  e^{-\lambda \delta (2k+1) n} \sum_{\mathbf{A} \in \mathcal{R}_{n+1}^{k+1}} \iint_{ \{ x,y \in P_{b(\mathbf{A})}, \ |x-y| >  e^{\varepsilon (4k+1) n} e^{- \varepsilon_0 n/2} \} }  \Bigg{|} \underset{\mathbf{A} \leftrightarrow \mathbf{B}}{\sum_{\mathbf{B} \in \mathcal{R}_{n+1}^k}} e^{2 i \pi \text{Re}\left( \eta(x,y) \zeta_{1,\mathbf{A}}(\mathbf{b}_1) \dots \zeta_{k,\mathbf{A}}(\mathbf{b}_k) \right)} \Bigg| d\mu(x) d\mu(y)  $$
$$ \leq e^{-\lambda \delta (2k+1) n} \sum_{\mathbf{A} \in \mathcal{R}_{n+1}^{k+1}} \sup_{\eta \in J_n} \Bigg{|} \underset{\mathbf{A} \leftrightarrow \mathbf{B}}{\sum_{\mathbf{B} \in \mathcal{R}_{n+1}^k}} e^{2 i \pi \text{Re}\left( \eta \zeta_{1,\mathbf{A}}(\mathbf{b}_1) \dots \zeta_{k,\mathbf{A}}(\mathbf{b}_k) \right)} \Bigg|  .$$ \end{proof}

Combining all those lemmas gives us Theorem 4.1.


\section{The sum product phenomenon}

\subsection{A key theorem }

We will use the following theorem of Li, which generalize a previous theorem of Bourgain in the complex case. It can be found as follows in \cite{LNP19}, and a proof can be found in $\cite{Li18}$.

\begin{theorem}

Given $\gamma > 0$, there exist $ \varepsilon_2 \in \ ]0,1[$ and $k \in \mathbb{N}^*$ such that the following holds for
$\eta \in \mathbb{C}$ with $|\eta| > 1$. Let $C_0 > 1$ and let $\lambda_1, \dots , \lambda_k$ be Borel measures supported on the annulus $\{ z \in \mathbb{C} \ , \ C_0^{-1} \leq |z| \leq C_0 \}$ with total mass less than $C_0$. Assume that each $\lambda_j$ satisfies the projective non concentration property, that is,
$$\forall \sigma \in [C_0 |\eta|^{−1}, C_0^{-1} |\eta|^{- \varepsilon_2}], \ \sup_{a,\theta \in \mathbb{R}} \ \lambda_j \{z \in \mathbb{C}, \ | \text{Re} (e^{i\theta} z) − a|  \leq \sigma \} \leq C_0 \sigma^\gamma .$$
Then there exists a constant $C_1$ depending only on $C_0$ and $\gamma$ such that
$$ \left| \int \exp(2 i \pi Re( \eta z_1 \dots z_k))  d\lambda_1(z_1) \dots d\lambda_k(z_k) \right| \leq C_1 |\eta|^{- \varepsilon_2} .$$

\end{theorem}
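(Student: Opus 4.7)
Since this is a sum--product type estimate in $\mathbb{C}$, the natural plan is to follow Bourgain's discretized ring theorem strategy from \cite{Bo10}, adapted to the projective complex setting as in \cite{LNP19,Li18}. The overall scheme has three stages. First, reduce the oscillatory integral to an energy estimate on product sets via iterated Cauchy--Schwarz. Second, discretize the measures at scale $\delta = |\eta|^{-1}$ and reread the projective non-concentration as a Katz--Tao--Frostman condition of exponent $\gamma$ along every real direction. Third, invoke a complex discretized sum--product theorem to convert the energy bound into polynomial saving in $|\eta|$.

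For the first stage, write the integral as $I$ and apply Cauchy--Schwarz in the variable $z_1$: one gets $|I|^2 \lesssim \int \bigl| \int e^{2 i \pi \mathrm{Re}(\eta z_1 (w-w'))} d\lambda_1(z_1) \bigr| d\lambda_2(z_2) \cdots d\lambda_k'(z_k')$, where $w = z_2 \cdots z_k$ and $w' = z_2' \cdots z_k'$. Iterating the trick on each remaining variable, $|I|^{2^{k-1}}$ is bounded by a multilinear expression that essentially counts the number of near-coincidences $a_1 \cdots a_k \approx a_1' \cdots a_k'$ modulo $|\eta|^{-1}$, i.e.\ the multiplicative energy of the $k$-fold product of the supports. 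This is the standard mechanism turning a Fourier decay problem into an additive combinatorics one.

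For the third stage, the key combinatorial input is a complex discretized sum--product/ring theorem: a $\delta$-separated subset $A \subset \mathbb{C}$ with projective non-concentration of exponent $\gamma$ at scale $\delta$ must satisfy a quantitative expansion such as $|A+A| \cdot |A \cdot A| \gtrsim \delta^{-c(\gamma)} |A|^2$. Plugging this into the multilinear expression from the first stage and iterating $k = k(\gamma)$ times then yields the polynomial saving $|I| \lesssim |\eta|^{-\varepsilon_2}$. The main obstacle, and the reason this statement is genuinely harder than its one-dimensional analogue, lies precisely in this input. On the real line it is Bourgain's theorem, but in $\mathbb{C}$ one must rule out concentration on \emph{any} real affine line, not just on a single one. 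The projective non-concentration hypothesis, imposed uniformly along every direction $e^{i\theta}$, is tailored exactly for this: Li's proof projects to a cleverly chosen direction to unlock Bourgain's one-dimensional sum--product theorem, then bootstraps through the ring structure of $\mathbb{C}$ to propagate the gain across all scales in $[C_0|\eta|^{-1}, C_0^{-1}|\eta|^{-\varepsilon_2}]$.
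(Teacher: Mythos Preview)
The paper does not prove this theorem at all: it is quoted verbatim as a black-box input from \cite{LNP19}, with the proof deferred to \cite{Li18}. There is therefore no argument in the paper to compare your proposal against. Your sketch is a reasonable high-level outline of the strategy that underlies Li's proof (iterated Cauchy--Schwarz to reduce to multiplicative energy, discretization, then a complex discretized sum--product theorem exploiting the projective non-concentration), but as written it is only a plan, not a proof: the actual work lies entirely in the ``third stage'' input, and you have merely asserted that such a theorem exists and that Li proves it. If the goal was to reproduce the paper's treatment, the correct answer is simply to cite \cite{Li18}; if the goal was to supply an independent proof, you would need to actually carry out the discretized ring argument rather than describe its shape.
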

Unfortunately, in our case the use of large deviations does not allow us to apply it straightforwardly. To highlight the dependence of $C_1$ when $C_0$ is permitted to grow gently, we prove the following theorem.

\begin{theorem}

Fix $0< \gamma < 1$. There exist $ \varepsilon_1 > 0$ and $k \in \mathbb{N}^*$ such that the following holds for
$\eta \in \mathbb{C}$ with $|\eta|$ large enough. Let $1< R < |\eta|^{\varepsilon_1}$ and let $\lambda_1, \dots , \lambda_k$ be Borel measures supported on the annulus $\{ z \in \mathbb{C} \ , \ R^{-1} \leq |z| \leq R \}$ with total mass less than $R$. Assume that each $\lambda_j$ satisfies the following projective non concentration property:
$$\forall \sigma \in [|\eta|^{−2}, |\eta|^{- \varepsilon_1}], \ \sup_{a,\theta \in \mathbb{R}} \ \lambda_j \{z \in \mathbb{C}, \ | \text{Re} (e^{i\theta} z) − a|  \leq \sigma \} \leq \sigma^\gamma .$$
Then there exists a constant $c>0$ depending only on $\gamma$ such that
$$ \left| \int \exp(2 i \pi Re( \eta z_1 \dots z_k))  d\lambda_1(z_1) \dots d\lambda_k(z_k) \right| \leq c |\eta|^{- \varepsilon_1} .$$

\end{theorem}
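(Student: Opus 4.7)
My plan is to reduce Theorem~5.2 directly to Theorem~5.1 by applying the latter with the choice $C_0 := R$. I would first verify that the hypotheses transfer: the support and total mass conditions are immediate from the assumptions of Theorem~5.2, and the projective non-concentration bound $\lambda_j(\{|\mathrm{Re}(e^{i\theta}z)-a|\le \sigma\})\le \sigma^\gamma$ trivially upgrades to the weaker $C_0\sigma^\gamma$ required by Theorem~5.1. The more delicate point is the range of $\sigma$: I need the window $[C_0|\eta|^{-1},\,C_0^{-1}|\eta|^{-\varepsilon_2}]$ demanded by Theorem~5.1 to sit inside the hypothesis window $[|\eta|^{-2},\,|\eta|^{-\varepsilon_1}]$. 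The lower endpoint is automatic since $C_0 \ge 1$, and the upper endpoint amounts to $|\eta|^{\varepsilon_1-\varepsilon_2}\le R$, which holds whenever $\varepsilon_1 \le \varepsilon_2$.

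Applying Theorem~5.1 in this setting produces an upper bound of the form $C_1(C_0,\gamma)\,|\eta|^{-\varepsilon_2}$. The quantitative refinement at the heart of Theorem~5.2 is then to show that $C_1$ depends \emph{polynomially} on $C_0$, i.e.\ $C_1(C_0,\gamma)\lesssim_\gamma C_0^A$ for some exponent $A=A(\gamma,k)$. Such polynomial control is not made explicit in the statement recalled from \cite{LNP19}, but I would extract it by revisiting Li's proof in \cite{Li18}: the argument reduces to a discretized multilinear sum-product estimate, and the constants track polynomially through the covering, pigeonholing, and discretization/un-discretization steps.

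Granting the polynomial dependence, the hypothesis $R \le |\eta|^{\varepsilon_1}$ gives $C_1 \lesssim |\eta|^{A\varepsilon_1}$, so the total bound becomes $C\,|\eta|^{A\varepsilon_1 - \varepsilon_2}$. Choosing $\varepsilon_1$ small enough that $(A+1)\varepsilon_1 \le \varepsilon_2$ (for instance $\varepsilon_1 := \varepsilon_2/(A+1)$, shrinking further if necessary to satisfy the range inclusion $\varepsilon_1 \le \varepsilon_2$ and to ensure $|\eta|$ is large enough) yields the desired estimate $c\,|\eta|^{-\varepsilon_1}$ with a constant $c$ depending only on $\gamma$.

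The main obstacle is exactly this tracking of polynomial-in-$C_0$ dependence of $C_1$ through the proof of Theorem~5.1, since the statement as quoted merely records that $C_1$ ``depends on $C_0$ and $\gamma$''. This is essentially a bookkeeping exercise inside \cite{Li18} rather than a new substantive ingredient, but it must be done carefully; once it is granted, the rest of the deduction is a mechanical check of range inclusions and a final choice of parameters.
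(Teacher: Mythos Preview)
Your approach is different from the paper's, and the difference matters. You apply Theorem~5.1 with $C_0 := R$ and then propose to extract polynomial dependence of $C_1$ on $C_0$ by going back into the proof in \cite{Li18}. The paper instead avoids this entirely by a \emph{dyadic decomposition}: it slices the annulus $\{R^{-1}\le |z|\le R\}$ into $O(\log R)$ dyadic shells, rescales each shell to the fixed annulus $\{1/2\le |z|\le 1\}$, and applies Theorem~5.1 on each piece with a \emph{fixed} $C_0$ (essentially $C_0=2$). This yields a $C_1$ depending only on $\gamma$, and the $R$-dependence enters only through the number of shells and the rescaling factors, giving an explicit bound $\lesssim m^k R^{2k}|\eta|^{-\varepsilon_2}$ with $m\simeq\log R$. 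Choosing $\varepsilon_1=\varepsilon_2/(2(2k+1))$ absorbs this into $|\eta|^{-\varepsilon_1}$.

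The practical advantage of the paper's route is that it treats Theorem~5.1 as a black box and is completely self-contained. Your route, by contrast, defers the entire content of the argument to the unverified claim that $C_1(C_0,\gamma)\lesssim_\gamma C_0^A$ in Li's proof; you correctly flag this as the ``main obstacle'', but as written your proposal does not actually carry it out, so it is a sketch rather than a proof. If the polynomial tracking really is routine, your argument would work and even be somewhat shorter on the page---but the dyadic decomposition is the cleaner idea precisely because it sidesteps that tracking altogether.
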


\begin{proof}

 Fix $0<\gamma<1$, and let $\varepsilon_2$ and $k$ given by the previous theorem. Choose $\varepsilon_1 := \frac{\varepsilon_2}{2(2k+1)}$. Let $1<R<|\eta|^{\varepsilon_1}$, and let $\lambda_1 , \dots, \lambda_k$ be measures that satisfy the hypothesis of Theorem 5.2. We are going to use a dyadic decomposition. \\

Let $m := \lfloor \log_2(R) \rfloor + 1$.
Then $\lambda_j$ is supported in the annulus
$$ \{ z \in \mathbb{C} \ , \ 2^{-m} \leq |z| \leq 2^m \}. $$
Define, for $A$ a borel subset of $\mathbb{C}$ and for $r=-m+1,\dots , m$:  $$ \lambda_{j,r}(A) := R^{-1} \lambda_j\left( 2^{r} \left( A \cap \{ 2^{-1} \leq |z| < 1 \} \right) \right) .$$

Those measures are all supported in $ \{ 1/2 \leq |z| \leq 1 \} $, and have total mass $\lambda_{j,r}(\mathbb{C}) \leq 1$. \\

Moreover, a non concentration property is satisfied by each $\lambda_{j,r}$. If we fix some $r_1,\dots,r_k$ between $-m+1$ and $m$ and define ${\eta}_{r_1\dots r_k} := 2^{r_1+\dots+ r_k} \eta$, then $|\eta_{r_1,\dots, r_k}| \geq (2R)^{-k} |\eta| > 2^{-k} |\eta|^{1-k \varepsilon_1} > 1$ if $\eta$ is large enough. Let $\sigma \in [|\eta_{r_1,\dots, r_k}|^{-1}, |\eta_{r_1,\dots, r_k}|^{-{\varepsilon_2}}]$. Then
$$\lambda_{j,r} \left( \{ | \text{Re} (e^{i\theta} z) − a|  \leq \sigma \} \right) = R^{-1} \lambda_j \left( 2^{r} \left( \{ | \text{Re} (e^{i\theta} z) − a|  \leq \sigma \} \cap \{ 2^{-1} \leq |z| < 1 \} \right)\right) $$
$$ = R^{-1} \lambda_j \left( \{ | \text{Re} (e^{i\theta} z) − 2^{r} a|  \leq 2^{r} \sigma \} \cap \{ 2^{r-1} \leq |z| < 2^{r} \} \right) $$
$$ \leq R^{-1} \lambda_j \left( \{ | \text{Re} (e^{i\theta} z) − 2^{r} a|  \leq 2^{r} \sigma \} \right) .$$

Since $2^{r} \sigma \in \left[ 2^{r} |\eta_{r_1,\dots,r_k}|^{-1} , 2^{r} |\eta_{r_1,\dots,r_k}|^{- \varepsilon_2} \right] \subset \left[ (2R)^{-(k+1)} |\eta|^{-1} , (2R)^{k+1}|\eta|^{- \varepsilon_2}\right] \subset \left[|\eta|^{-2}, |\eta|^{-\varepsilon_1}\right]$ if $|\eta|$ is large enough, we can use the non-concentration hypothesis assumed for each $\lambda_j$ to get:
$$ \lambda_{j,r} \left( \{ | \text{Re} (e^{i\theta} z) − a|  \leq \sigma \right) \leq R^{-1} (2^{r} \sigma)^\gamma \leq 2 \sigma^\gamma .$$

Hence, by the previous theorem, there exists a constant $C_1$ depending only on $\gamma$ such that
$$ \left| \int \exp(2 i \pi Re( \eta_{r_1 \dots r_k} z_1 \dots z_k))  d\lambda_{1,r_1}(z_1) \dots d\lambda_{k,r_k}(z_k) \right| \leq C_1 |\eta_{r_1 \dots r_k}|^{- \varepsilon_2} . $$
Finally, since $$ \lambda_j(A) = R \sum_{r=-m+1}^m \lambda_{j,r}(2^{-r} A), $$
we get that:
$$  \left| \int \exp(2 i \pi Re( \eta z_1 \dots z_k))  d\lambda_1(z_1) \dots d\lambda_k(z_k) \right| $$
$$ \leq \sum_{r_1 , \dots r_k} R^k \left| \int \exp(2 i \pi Re( \eta z_1 \dots z_k))  d\lambda_{1,r_1}(2^{-r_1} z_1) \dots d\lambda_{k,r_k}(2^{-r_k} z_k) \right| $$
$$ =  \sum_{r_1 , \dots r_k} R^k \left| \int \exp(2 i \pi Re( \eta_{r_1 \dots r_k} z_1 \dots z_k))  d\lambda_{1,r_1}(z_1) \dots d\lambda_{k,r_k}(z_k) \right|$$
$$ \leq C_1 (2m)^k R^k |\eta_{r_1 \dots r_k}|^{- \varepsilon_2}  \leq 4^k C_1 m^k R^{2k} |\eta|^{- \varepsilon_2} .$$

Since $m \leq \log_2(R) +1$, and since $k$ depends only on $\gamma$, there exists a constant $c$ that depends only on $\gamma$ such that $ 4^k C_1 m^k R^{2k} \leq c R^{2k+1}$ for any $R> 1$. Finally, $ c R^{2k+1} |\eta|^{- \varepsilon_2} \leq |\eta|^{-\varepsilon_1} $. \end{proof}

\begin{theorem}

Fix $0 < \gamma < 1$. Then there exist $k \in \mathbb{N}^*$ and $\varepsilon_1 > 0$ depending only on $\gamma$ such that the following holds for $\eta \in \mathbb{C}$ with $|\eta|$ large enough. Let $1 < R < |\eta|^{\varepsilon_1}$ , $N > 1$ and $\mathcal{Z}_1, . . . , \mathcal{Z}_k$ be finite sets such that $ \# \mathcal{Z}_j ≤ RN$. Consider some maps $\zeta_j : \mathcal{Z}_j \rightarrow \mathbb{C} $, $j = 1, \dots , k$, such that, for all $j$:
$$ \zeta_j ( \mathcal{Z}_j ) \subset \{ z \in \mathbb{C} \ , \ R^{-1} \leq |z| \leq R \}$$ and 
$$\forall \sigma \in [|\eta|^{−2}, |\eta|^{- \varepsilon_1}], \quad \sup_{a,\theta \in \mathbb{R}} \ \# \{\mathbf{b} \in \mathcal{Z}_j , \ | \text{Re} (e^{i\theta} \zeta_j(\mathbf{b})) − a|  \leq \sigma \} ≤ N \sigma^{\gamma}.$$
Then there exists a constant $c > 0$ depending only on $\gamma$ such that 
$$ \left| N^{−k} \sum_{\mathbf{b}_1 \in \mathcal{Z}_1,\dots,\mathbf{b}_k \in \mathcal{Z}_k} \exp\left(2 i \pi \text{Re}\left(\eta \zeta_1(\mathbf{b}_1) \dots \zeta_k(\mathbf{b}_k) \right) \right) \right| ≤ c |\eta|^{−{\varepsilon_1}}.$$

\end{theorem}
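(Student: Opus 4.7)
The plan is to reduce Theorem 5.4 to Theorem 5.3 by building, out of each finite set $\mathcal{Z}_j$ and map $\zeta_j$, a discrete measure on $\mathbb{C}$ that inherits all the hypotheses required by Theorem 5.3. This is really a direct translation between the ``measure'' formulation and the ``counting'' formulation, so I do not expect any substantive obstacle.

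Concretely, for each $j \in \{1,\dots,k\}$ I would define
$$ \lambda_j := N^{-1} \sum_{\mathbf{b} \in \mathcal{Z}_j} \delta_{\zeta_j(\mathbf{b})}, $$
where $\delta_z$ denotes the Dirac mass at $z$. Since $\zeta_j(\mathcal{Z}_j) \subset \{R^{-1}\leq|z|\leq R\}$, each $\lambda_j$ is supported on that annulus. The total mass is $\lambda_j(\mathbb{C}) = N^{-1} \#\mathcal{Z}_j \leq R$, and the projective non-concentration hypothesis in Theorem 5.4 translates, upon dividing by $N$, exactly to
$$ \sup_{a,\theta \in \mathbb{R}} \lambda_j\bigl(\{z \in \mathbb{C},\ |\text{Re}(e^{i\theta}z)-a|\leq \sigma\}\bigr) \leq \sigma^\gamma $$
for all $\sigma \in [|\eta|^{-2}, |\eta|^{-\varepsilon_1}]$. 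So the $\lambda_j$ satisfy all hypotheses of Theorem 5.3 (applied with the same $\gamma$, which fixes a corresponding $\varepsilon_1$ and $k$ depending only on $\gamma$).

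Now I would rewrite the sum appearing in Theorem 5.4 as an iterated integral against the product measure:
$$ N^{-k} \sum_{\mathbf{b}_1 \in \mathcal{Z}_1,\dots,\mathbf{b}_k \in \mathcal{Z}_k} \exp\bigl(2i\pi \text{Re}(\eta \zeta_1(\mathbf{b}_1)\cdots \zeta_k(\mathbf{b}_k))\bigr) = \int \exp\bigl(2i\pi \text{Re}(\eta z_1 \cdots z_k)\bigr)\, d\lambda_1(z_1)\cdots d\lambda_k(z_k). $$
Applying Theorem 5.3 to the right-hand side immediately yields the bound $c|\eta|^{-\varepsilon_1}$ with $c$ depending only on $\gamma$, which is precisely the conclusion of Theorem 5.4. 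The only mild point to keep in mind is to choose the $\varepsilon_1$ of Theorem 5.4 to coincide with (or be at most) the one supplied by Theorem 5.3 for the given $\gamma$, so that the range of admissible $\sigma$ matches; this is a one-line verification rather than a real difficulty.
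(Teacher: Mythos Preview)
Your proposal is correct and is essentially identical to the paper's own proof: define $\lambda_j := N^{-1}\sum_{\mathbf{b}\in\mathcal{Z}_j}\delta_{\zeta_j(\mathbf{b})}$, verify the support, mass, and non-concentration hypotheses, and apply the preceding measure-version theorem. The only discrepancy is in numbering (the paper labels the measure version as Theorem~5.2 and the discrete version as Theorem~5.3), not in content.
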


\begin{proof}

Define our measures as sums of dirac mass: $$ \lambda_j := \frac{1}{N} \sum_{\mathbf{b} \in \mathcal{Z}_j} \delta_{\zeta_j(\mathbf{b})} .$$

We see that $\lambda_j$ is supported in the annulus $ \{ z \in \mathbb{C} \ , \ R^{-1} \leq |z| \leq R \} $. The total mass is bounded by
$$ \lambda_j(\mathbb{C}) \leq N^{-1} \# \mathcal{Z}_j \leq R.$$

Then, if $\sigma \in [|\eta|^{−2}, |\eta|^{- \varepsilon_1}]$, we have, for any $a,\theta \in \mathbb{R}$:
$$ \lambda_j \{z \in \mathbb{C}, \ | \text{Re} (e^{i\theta} z) − a|  \leq \sigma \} = \frac{1}{N} \# \left\{\mathbf{b} \in \mathcal{Z}_j  , \ |\text{Re}(e^{i \theta} \zeta_j(\mathbf{b}))-a| \leq \sigma  \right\} \leq  \sigma^\gamma. $$
Hence, the previous theorem applies directly, and gives us the desired result. \end{proof}

\subsection{End of the proof assuming non concentration}

We will use Theorem 5.3 on the maps $\zeta_{j,\mathbf{A}}$.
Let's carefully define the framework. \\
For some fixed $\mathbf{A} \in \mathcal{R}_{n+1}^{k+1}$, define for $j=1, \dots, k$  $$ \mathcal{Z}_j := \{ \mathbf{b} \in \mathcal{R}_{n+1} , \mathbf{a}_{j-1} \rightsquigarrow \mathbf{b} \rightsquigarrow \mathbf{a}_j \  \} .$$

The maps $\zeta_{j,\mathbf{A}}(\mathbf{b}) := e^{2 \lambda n} g_{\mathbf{a}_{j-1}' \mathbf{b}}'(x_{{\mathbf{a}}_j})$ are defined on $\mathcal{Z}_j$. There exists a constant $\alpha>0$ (which will be fixed from now on) such that
$$\# \mathcal{Z}_j \leq e^{\varepsilon \alpha n} e^{ \delta \lambda n} $$
and
$$ \zeta_{j,\mathbf{A}}( \mathcal{Z}_j ) \subset \{ z \in \mathbb{C} \ , \ e^{-\varepsilon \alpha n} \leq |z| \leq e^{\varepsilon \alpha n} \} .$$

Let $\gamma>0$ small enough. Theorem 5.3 then fixes $k$ and some $\varepsilon_1$.
The goal is to apply Theorem 5.3 to the maps $\zeta_{j,\mathbf{A}}$, for $N := e^{\lambda \delta n}$, $R:=e^{\varepsilon \alpha n}$ and $\eta \in J_n$. Notice that choosing $\varepsilon$ small enough ensures that $R<|\eta|^{\varepsilon_1}$, and taking $n$ large enough ensures that $|\eta|$ is large. 
If we are able to prove the non concentration hypothesis in this context, then Theorem 5.3 can be applied and we would be able to conclude the proof of the main Theorem 1.4.
Indeed, we already know that
$$ e^{- \varepsilon  \alpha n} |\widehat{\mu}(\xi)|^2 \lesssim e^{-\lambda \delta (2k+1) n} \sum_{\mathbf{A} \in \mathcal{R}_{n+1}^{k+1}} \sup_{\eta \in J_n} \Bigg{|} \underset{\mathbf{A} \leftrightarrow \mathbf{B}}{\sum_{\mathbf{B} \in \mathcal{R}_{n+1}^k}} e^{2 i \pi \text{Re}\left( \eta \zeta_{1,\mathbf{A}}(\mathbf{b}_1) \dots \zeta_{k,\mathbf{A}}(\mathbf{b}_k) \right)} \Bigg| $$
$$ \quad \quad  \quad \quad  \quad \quad  \quad \quad +  e^{- \varepsilon  \alpha n} \mu(J \setminus R_{n+1}^{2k+1}(\varepsilon) )^2 + \kappa^{-2n} + e^{-(\lambda-\varepsilon_0) n } + e^{-\varepsilon_0 \delta_{AD} n/2} $$

by Proposition 4.1. Since every error term already enjoys exponential decay in $n$, we just have to deal with the sum of exponentials. By Theorem 5.3, we can then write
$$ \sup_{\eta \in J_n} \Bigg{|} \underset{\mathbf{A} \leftrightarrow \mathbf{B}}{\sum_{\mathbf{B} \in \mathcal{R}_{n+1}^k}} e^{2 i \pi \text{Re}\left( \eta \zeta_{1,\mathbf{A}}(\mathbf{b}_1) \dots \zeta_{k,\mathbf{A}}(\mathbf{b}_k) \right)} \Bigg| \leq c e^{\lambda k \delta n} e^{- \varepsilon_0 \varepsilon_1 n/2 } ,$$

and hence we get
$$ e^{-\lambda \delta (2k+1) n} \sum_{\mathbf{A} \in \mathcal{R}_{n+1}^{k+1}} \sup_{\eta \in J_n} \Bigg{|} \underset{\mathbf{A} \leftrightarrow \mathbf{B}}{\sum_{\mathbf{B} \in \mathcal{R}_{n+1}^k}} e^{2 i \pi \text{Re}\left( \eta \zeta_{1,\mathbf{A}}(\mathbf{b}_1) \dots \zeta_{k,\mathbf{A}}(\mathbf{b}_k) \right)} \Bigg| $$ $$ \lesssim e^{\varepsilon \alpha n} e^{-\lambda \delta  (2k+1) n} e^{\lambda \delta (k+1) n} e^{\lambda \delta k n} e^{- \varepsilon_0 \varepsilon_1 n/2} \lesssim e^{\varepsilon \alpha n} e^{-\varepsilon_0 \varepsilon_1 n/2} .$$

Now, we see that we can choose $\varepsilon$ small enough so that all terms enjoy exponential decay in $n$, and since $ |\xi| \simeq e^{(2k+1) \lambda n} e^{\varepsilon_0 n} $, we have proved polynomial decay of $|\widehat{\mu}|^2$. \\

\section{The non-concentration hypothesis}

The last part of this paper is devoted to the proof of the non-concentration hypothesis that we just used.

\subsection{Statement of the non-concentration theorem}

\begin{definition}

For a given $\mathbf{A} \in \mathcal{R}_{n+1}^{k+1}$, define for $j=1, \dots, k$  $$ \mathcal{Z}_j := \{ \mathbf{b} \in \mathcal{R}_{n+1} , \ \mathbf{a}_{j-1} \rightsquigarrow \mathbf{b} \rightsquigarrow \mathbf{a}_j \  \} $$

Then define $$\zeta_{j,\mathbf{A}}(\mathbf{b}) := e^{2 \lambda n} g_{\mathbf{a}_{j-1}' \mathbf{b}}'(x_{{\mathbf{a}}_j})$$ on $\mathcal{Z}_j$. The following is satisfied, for some fixed constant $\alpha>0$:

 $$\# \mathcal{Z}_j \leq e^{\varepsilon \alpha n} e^{ \delta \lambda n} $$
 and
$$ \zeta_{j,\mathbf{A}}( \mathcal{Z}_j ) \subset \{ z \in \mathbb{C} \ , \ e^{-\varepsilon \alpha n} \leq |z| \leq e^{\varepsilon \alpha n} \} .$$

\end{definition}

We are going to prove the following fact, which will allow us to apply Theorem 5.3 for $\eta \in J_n$, $R:=e^{\varepsilon \alpha n}$ and $N:= e^{\lambda \delta n} $.

\begin{theorem}[non concentration]
There exists $\gamma>0$, and we can choose $\varepsilon_0>0$, such that the following holds. \\

Let $\eta \in \{ e^{\varepsilon_0 n/2} \leq |\eta| \leq e^{2 \varepsilon_0 n} \}$. Let $\mathbf{A} \in \mathcal{R}_{n+1}^{k+1}$. Then, if $n$ is large enough,

$$\forall \sigma \in [ |\eta|^{−2}, |\eta|^{- \varepsilon_1}], \quad \sup_{a,\theta \in \mathbb{R}} \ \# \left\{\mathbf{b} \in \mathcal{Z}_j , \ | \emph{\text{Re}} (e^{i\theta} \zeta_{j,\mathbf{A}}(\mathbf{b})) − a|  \leq \sigma \right\} ≤ N \sigma^{\gamma}, $$

where $R:= e^{\varepsilon \alpha n}$, $N:= e^{\lambda \delta n}$ and $\varepsilon_1$ and $k$ are fixed by Theorem 5.3.

\end{theorem}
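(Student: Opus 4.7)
The plan is to convert the counting problem into an estimate on the equilibrium measure, and then to deduce the required non-concentration from a contraction property of twisted transfer operators, generalizing Theorem 2.5 of \cite{OW17} to the two-dimensional setting.

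\textbf{First reduction: from counting to measure.} For each $\mathbf{b} \in \mathcal{R}_{n+1}$, Proposition 3.2 gives $\mu(P_\mathbf{b}) \sim N^{-1}$ with $N = e^{\lambda \delta n}$. Grouping the words of $\mathcal{Z}_j$ by the cell they determine, and using Koebe distortion on $g_{\mathbf{a}_{j-1}'\mathbf{b}}$ to absorb the variation of $\zeta_{j,\mathbf{A}}(\mathbf{b})$ across a single $P_\mathbf{b}$ into a negligible $1+O(\kappa^{-n})$ factor, one obtains
\[
\#\bigl\{\mathbf{b} \in \mathcal{Z}_j : |\mathrm{Re}(e^{i\theta}\zeta_{j,\mathbf{A}}(\mathbf{b})) - a| \leq \sigma \bigr\} \lesssim e^{\varepsilon \alpha n}\, N \cdot \mu(E_{\theta,a,2\sigma}),
\]
where $E_{\theta,a,\sigma} := \{x \in J : |\mathrm{Re}(e^{i\theta} \Psi_n(x)) - a| \leq \sigma\}$ and $\Psi_n$ is a piecewise-constant (on generation $n{+}1$ cells) proxy for $\zeta_{j,\mathbf{A}}$. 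It therefore suffices to prove $\mu(E_{\theta,a,\sigma}) \lesssim \sigma^{\gamma'}$ uniformly in $\theta,a$, for $\sigma$ in the target range and some $\gamma' > \gamma$.

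\textbf{Second reduction: Dolgopyat-type contraction.} On each $P_\mathbf{b}$, $\log \Psi_n$ is (up to the distortion error) a complex Birkhoff sum $-S_{2n}(\log f')$, with real part $-S_{2n} \tau$ and imaginary part $-S_{2n} \vartheta$ where $\vartheta = \arg f'$ is a cocycle modulo $2 \pi \mathbb{Z}$. Smoothing the indicator of the affine strip $\{|\mathrm{Re}(e^{i\theta} \cdot) - a| \leq \sigma\}$ by a Schwartz bump and Fourier-decomposing it reduces the measure estimate to controlling, for frequencies $\xi$ comparable to $\sigma^{-1}$, oscillatory integrals of the form
\[
I_n(\xi, \theta) := \int_J e^{2 \pi i \xi\, \mathrm{Re}(e^{i\theta} \Psi_n(x))}\, d\mu(x).
\]
Using $\mathcal{L}_\varphi^* \mu = \mu$ and the Birkhoff nature of $\Psi_n$, $I_n(\xi,\theta)$ equals, up to lower-order errors, $\int \mathcal{L}_{\varphi,\xi e^{i\theta}}^{2n} \mathbb{1}\, d\mu$, where $\mathcal{L}_{\varphi,s}$ denotes the transfer operator twisted by the complex potential $\varphi + 2\pi i\, \mathrm{Re}(s \log f')$. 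The announced generalization of Theorem 2.5 of \cite{OW17}, valid because $J$ is not contained in a circle, yields $\|\mathcal{L}_{\varphi,s}^n\|_{C^1} \lesssim \rho^n$ for some $\rho < 1$ uniformly over $|s|$ in an appropriate polynomial range. Combining with the frequency decomposition gives the polynomial decay in $\sigma$, with the parameter $\varepsilon_0$ ultimately chosen small enough that the effective frequency range $|\xi|$ coming from $|\eta|\in J_n$ sits inside the range for which the spectral contraction holds.

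\textbf{Main obstacle.} The principal technical difficulty is that $\arg f'$ is only defined mod $2\pi \mathbb{Z}$, so the twist $e^{2\pi i\, \mathrm{Re}(s \log f')}$ is globally well-defined as a function only when $\mathrm{Im}(s) \in \mathbb{Z}$. For general real $s_2 := \mathrm{Im}(s)$ the twisted operator must be realized on cocycle-equivariant sections, or $s_2$ must be decomposed as $\lfloor s_2 \rfloor + \{s_2\}$ with the fractional part treated as a bounded perturbation. This subtlety, absent in the one-dimensional setting of \cite{SS20}, must be threaded through the Dolgopyat cancellation scheme, and the contraction $\rho < 1$ must be shown uniform in the projection angle $\theta$. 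The non-circularity hypothesis on $J$ is precisely what rules out the algebraic coincidences that would otherwise allow $\Psi_n$ to concentrate on a one-dimensional subset of $\mathbb{C}$, and extracting this rigidity from non-circularity — in the spirit of Theorem 3.9 of \cite{OW17} but in the complex regime — is the core analytic step.
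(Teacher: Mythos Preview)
Your strategy --- reduce the non-concentration to spectral decay of a twisted transfer operator generalizing \cite{OW17} --- is exactly the paper's. But you correctly flag the central obstacle (the argument of $f'$ is only defined mod $2\pi$, so a twist $e^{2\pi i\,\mathrm{Re}(s\log f')}$ is ill-defined for non-integer $\mathrm{Im}(s)$) and then do not actually resolve it: the fractional-part-as-perturbation idea is neither developed nor obviously workable, since a bounded multiplicative perturbation of the weight does not automatically preserve the Dolgopyat cancellation mechanism, and uniformity in the projection angle $\theta$ would have to be checked from scratch. There is also a smaller inconsistency: the phase $\mathrm{Re}(e^{i\theta}\Psi_n)$ in your oscillatory integral is not a Birkhoff sum --- only $\log\Psi_n$ is --- so $I_n(\xi,\theta)$ is not literally an iterate of a multiplicatively twisted transfer operator until after you pass to logarithms, which you do only implicitly.

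The paper's resolution is to arrange that only \emph{integer} angular frequencies ever arise. Before any harmonic analysis it performs two reductions: first (Lemma 6.2) it covers the strip by $O(R/\sigma)$ small squares, reducing to a disk-count with target exponent $1+\gamma$ (the extra power of $\sigma$ absorbs the covering); second (Lemma 6.3) it passes to logarithmic coordinates, so that the relevant quantity $\log g_{\mathbf{a}_{j-1}'\mathbf{b}}'(x_{\mathbf{a}_j})$ lives in the cylinder $\mathbb{C}/2\pi i\mathbb{Z}$. It then places a bump on the target box, \emph{periodizes} it to a $2\pi\mathbb{Z}[i]$-periodic function, and expands in Fourier \emph{series} rather than by a continuous Fourier transform. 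The frequencies $(\mu,\nu)\in\mathbb{Z}^2$ that appear give globally well-defined twists $|g'|^{-i\mu}(g'/|g'|)^{-\nu}$, and the sum over $\mathbf{b}$ is recognized exactly as $\mathcal{L}_{\varphi,-i\mu,\nu}^n$ applied to an explicit $C^1$ function. The contraction statement needed is then Theorem 6.4, with integer angular parameter $l=\nu$, and no perturbation argument is required; $\varepsilon_0$ is fixed afterwards in terms of the contraction rate $\rho$.
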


\subsection{Beginning of the proof}

The proof of Theorem 6.1 is in two parts. First of all, we see that the non-concentration hypothesis formulated above counts how many $\zeta_{j,\mathbf{A}}$ are in a strip. We begin by reducing the non-concentration to a counting problem in small disks.

\begin{lemma}

If $\varepsilon_0$ and $\gamma$ are such that, for $\sigma \in [ e^{- 5 \varepsilon_0 n} ,  e^{ - \varepsilon_1 \varepsilon_0 n/5 }  ]$, $$\sup_{R^{-1} \leq |a| \leq R} \#  \{ \mathbf{b} \in \mathcal{Z}_j,  \ \zeta_{j,\mathbf{A}}(\mathbf{b}) \in B(a,\sigma) \} \leq N \sigma^{1+\gamma} ,$$

then Theorem 6.1 is true.

\end{lemma}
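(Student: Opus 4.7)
The reduction from strip non-concentration to disk non-concentration is a standard covering argument. First I would identify the relevant $\sigma$-range: for $|\eta| \in [e^{\varepsilon_0 n/2}, e^{2\varepsilon_0 n}]$ and $\sigma \in [|\eta|^{-2}, |\eta|^{-\varepsilon_1}]$, one gets $\sigma \in [e^{-4\varepsilon_0 n}, e^{-\varepsilon_0 \varepsilon_1 n/2}]$. The bulk of this range is contained in the disk-hypothesis range $[e^{-3\varepsilon_0 n}, e^{-\varepsilon_0 \varepsilon_1 n/3}]$; only the small window $\sigma \in [e^{-4\varepsilon_0 n}, e^{-3\varepsilon_0 n}]$ at the lower end falls outside it and must be treated separately.

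For $\sigma$ in the main range $[e^{-3\varepsilon_0 n}, e^{-\varepsilon_0 \varepsilon_1 n/2}]$, fix $a, \theta \in \mathbb{R}$ and consider the strip $S := \{z \in \mathbb{C} : |\text{Re}(e^{i\theta} z) - a| \leq \sigma\}$. Its intersection with the annulus $\{R^{-1} \leq |z| \leq R\}$ is a rectangle-like region of size $\lesssim R \times \sigma$, and can be covered by $\lesssim R/\sigma$ Euclidean disks of radius $2\sigma$ with centers in the annulus (any grid disk meeting the annulus is replaced by a slightly larger one centered at a point of the intersection). Applying the disk hypothesis to each such disk gives
$$
\# \{\mathbf{b} \in \mathcal{Z}_j : \zeta_{j,\mathbf{A}}(\mathbf{b}) \in S\} \lesssim (R/\sigma) \cdot N (2\sigma)^{1+\gamma_0} \lesssim N R \sigma^{\gamma_0},
$$
where $\gamma_0$ denotes the exponent furnished by the disk hypothesis. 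For the small window $\sigma \in [e^{-4\varepsilon_0 n}, e^{-3\varepsilon_0 n}]$, the strip is thinner than disks of radius $\sigma_1 := e^{-3\varepsilon_0 n}$, so a single row of $\lesssim R/\sigma_1$ such disks already covers $S \cap$ (annulus), producing the coarser bound $\lesssim N R \sigma_1^{\gamma_0}$.

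It remains to absorb the factor $R = e^{\varepsilon \alpha n}$ into a loss of exponent in $\sigma$, which is the only delicate step. In the main range $\log(1/\sigma) \geq \varepsilon_0 \varepsilon_1 n/2$, so choosing $\varepsilon$ small compared to $\gamma_0 \varepsilon_0 \varepsilon_1$ ensures $N R \sigma^{\gamma_0} \leq N \sigma^{\gamma}$ for some uniform $\gamma \in (0, \gamma_0)$. In the small window, $\log(1/\sigma) \leq 4\varepsilon_0 n$ and $\sigma_1^{\gamma_0} = e^{-3\gamma_0 \varepsilon_0 n}$, so the same choice of $\varepsilon$ ensures $N R \sigma_1^{\gamma_0} \leq N \sigma^{\gamma}$. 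The ``main obstacle'' is thus pure parameter bookkeeping: one needs the hierarchy $\varepsilon \ll \gamma_0 \varepsilon_0 \varepsilon_1$, which is consistent with the ordering of constants announced after Proposition 4.1.
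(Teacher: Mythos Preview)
Your proposal is correct and follows essentially the same route as the paper: cover the strip intersected with the annulus $\{R^{-1}\le |z|\le R\}$ by $\lesssim R/\sigma$ disks (the paper interposes squares, which is cosmetic), sum the disk hypothesis to get $\lesssim NR\sigma^{\gamma_0}$, and absorb $R=e^{\varepsilon\alpha n}$ into a smaller exponent using $\varepsilon\ll\gamma_0\varepsilon_0\varepsilon_1$. You are in fact slightly more careful than the paper in treating the low-end window $\sigma\in[e^{-4\varepsilon_0 n},e^{-3\varepsilon_0 n}]$ separately via a coarser cover at scale $\sigma_1=e^{-3\varepsilon_0 n}$; the paper glosses over this range shift.
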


\begin{proof}

Suppose that the result in lemma 6.2 is true.
Then, we know that squares $C_{c,\theta,\sigma} := e^{-i \theta} B_\infty(c,\sigma) =  \{ z \in \mathbb{C}, \ |\text{Re}(e^{i \theta}z-c)| \leq \sigma, |\text{Im}(e^{i \theta}z-c)| \leq \sigma \} $
are included in disks $B(c,\sigma \sqrt{2})$. (We note $B_\infty$ the balls for the $L^\infty$ norm.)
Hence,
$$ \forall \sigma \in [ e^{-  5 \varepsilon_0 n} , e^{ - \varepsilon_1 \varepsilon_0 n/5 }  ], $$
$$ \# \{ \mathbf{b} \in \mathcal{Z}_j , \ \zeta_{j,\mathbf{A}} \in C_{c,\theta,\sigma} \} \leq \# \{ \mathbf{b} \in \mathcal{Z}_j , \ \zeta_{j,\mathbf{A}} \in B(c,\sigma \sqrt{2}) \} \leq N \sqrt{2}^{1+\gamma} \sigma^{1+\gamma} .$$
Our next move is to cover the strip $S_{a,\theta,\sigma} := \left\{z \in \mathbb{C} , \ |  \text{Re} (e^{i\theta} z) − a|  \leq \sigma \right\}$
by squares $C_{c,\theta,\sigma}$. First of all, recall that $\zeta_{j,\mathbf{A}}(\mathcal{Z}_j) \subset B(0,R) $.
Hence, we can write, for a fixed $a$ and $\theta$:

$$ \# \{ \mathbf{b} \in \mathcal{Z}_j , \ \zeta_{j,\mathbf{A}}(\mathbf{b}) \in S_{a,\theta,\sigma}  \} \leq \sum_{c \in K(\sigma,R)} \# \{ \mathbf{b} \in \mathcal{Z}_j , \ \zeta_{j,\mathbf{A}} \in C_{c,\theta,\sigma}  \} $$

where $K(\sigma,n) := \{ e^{- i \theta}( a + i k \sigma ) \ | \ k = - \lfloor R/\sigma \rfloor, \dots , \lfloor R/\sigma \rfloor  \}$ is the set of the centers of the squares, chosen so that it covers our restricted strip. Hence, for $\sigma \in [ e^{-4 \varepsilon_0 n} , e^{ - \varepsilon_1 \varepsilon_0 n/2 }  ]$, 

$$ \# \{ \mathbf{b} \in \mathcal{Z}_j , \ \zeta_{j,\mathbf{A}}(\mathbf{b}) \in S_{a,\theta,\sigma}  \}  \lesssim \frac{R}{\sigma} N \sqrt{2}^{1+\gamma} \sigma^{1+\gamma} . $$

Then, since $\sigma$ goes to zero exponentially fast in $n$, and since $R$ grows slowly since $\varepsilon$ can be chosen as small as we want, we can just take $n$ large enough so that

$$  \# \{ \mathbf{b} \in \mathcal{Z}_j , \ \zeta_{j,\mathbf{A}}(\mathbf{b}) \in S_{a,\theta,\sigma}  \}  \lesssim \frac{R}{\sigma} N \sqrt{2}^{1+\gamma} \sigma^{1+\gamma} \leq N \sigma^{\gamma/2}, $$

and we are done. \end{proof}

\begin{definition}

Since $\exp : \mathbb{C} \rightarrow \mathbb{C}^* $ is a surjective, holomorphic morphism, with kernel $2 i \pi \mathbb{Z}$, it induces a biholomorphism $ \exp : \mathbb{C}/2 i \pi \mathbb{Z} \rightarrow \mathbb{C}^* $. Define by $ \log : \mathbb{C}^* \rightarrow \mathbb{C}/2 i \pi \mathbb{Z} $ its holomorphic inverse.
Note $ \text{mod}_{2 i \pi} : \mathbb{C} \rightarrow \mathbb{C}/2 i \pi \mathbb{Z} $ the projection.
\end{definition}

Now, we reduce the problem to a counting estimate on $\log(\zeta_{j,\mathbf{A}})$.

\begin{lemma}

If $\varepsilon_0$ and $\gamma$ are such that, for $\sigma \in [ e^{-6 \varepsilon_0 n} , e^{ - \varepsilon_1 \varepsilon_0 n/6 }  ]$, $$\sup_{a \in \mathbb{C}} \#  \left\{ \mathbf{b} \in \mathcal{Z}_j,  \ \log g_{\mathbf{a}_{j-1}' \mathbf{b}}' (x_{\mathbf{a}_j}) \in \emph{\text{mod}}_{2 i \pi} \left(B_\infty(a,\sigma) \right) \right\} \leq N \sigma^{1+\gamma} ,$$

then Theorem 6.1 is true.

\end{lemma}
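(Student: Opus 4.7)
The plan is to prove Lemma 6.4 by combining its hypothesis with Lemma 6.2, using the fact that $\log$ is a well-controlled biholomorphism between a neighborhood of the annulus $\{R^{-1}\leq|z|\leq R\}$ and its image in $\mathbb{C}/2i\pi\mathbb{Z}$. Concretely, I would assume the estimate of Lemma 6.4 and deduce the estimate required by Lemma 6.2 for some (possibly smaller) exponent $\gamma'>0$; the conclusion of Theorem 6.1 then follows from the already established Lemma 6.2.

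First, fix a ball $B(a,\sigma)$ with $R^{-1}\leq|a|\leq R$ and $\sigma\in[e^{-3\varepsilon_0 n},\,e^{-\varepsilon_1\varepsilon_0 n/3}]$. Provided $\varepsilon$ is chosen small enough so that $\varepsilon\alpha<\varepsilon_1\varepsilon_0/3$, we have $\sigma\ll R^{-1}\leq|a|$ for $n$ large, so $B(a,\sigma)\subset\mathbb{C}^\ast$ and the principal branch of $\log$ is well-defined on it. On this ball the derivative of $\log$ is bounded by $(|a|-\sigma)^{-1}\leq 2R$, so
\[
\log\bigl(B(a,\sigma)\bigr)\;\subset\;\mathrm{mod}_{2i\pi}\bigl(B_\infty(\log a,\,2R\sigma)\bigr).
\]
Since $\zeta_{j,\mathbf{A}}(\mathbf{b})=e^{2\lambda n}g'_{\mathbf{a}_{j-1}'\mathbf{b}}(x_{\mathbf{a}_j})$, applying $\log$ and translating by $-2\lambda n$ shows that the condition $\zeta_{j,\mathbf{A}}(\mathbf{b})\in B(a,\sigma)$ implies $\log g'_{\mathbf{a}_{j-1}'\mathbf{b}}(x_{\mathbf{a}_j})\in\mathrm{mod}_{2i\pi}\bigl(B_\infty(a',\,\sigma')\bigr)$ with $a':=\log a-2\lambda n$ and $\sigma':=2R\sigma$.

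Next I check that $\sigma'$ falls in the range $[e^{-4\varepsilon_0 n},\,e^{-\varepsilon_1\varepsilon_0 n/4}]$ required by the hypothesis of Lemma 6.4. The lower bound $\sigma'\geq 2\sigma\geq e^{-4\varepsilon_0 n}$ is immediate for $n$ large, and the upper bound $\sigma'\leq 2\,e^{\varepsilon\alpha n}e^{-\varepsilon_1\varepsilon_0 n/3}\leq e^{-\varepsilon_1\varepsilon_0 n/4}$ holds provided $\varepsilon$ is taken small enough that $\varepsilon\alpha\leq\varepsilon_1\varepsilon_0/12$. Applying the hypothesis of Lemma 6.4 yields
\[
\#\bigl\{\mathbf{b}\in\mathcal{Z}_j:\zeta_{j,\mathbf{A}}(\mathbf{b})\in B(a,\sigma)\bigr\}
\;\leq\;N(2R\sigma)^{1+\gamma}
\;\leq\;2^{1+\gamma}\,e^{\varepsilon\alpha(1+\gamma)n}\,N\sigma^{1+\gamma}.
\]

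Finally I absorb the parasitic factor $e^{\varepsilon\alpha(1+\gamma)n}$ by sacrificing part of the exponent. Setting $\gamma':=\gamma/2$, the desired inequality $2^{1+\gamma}e^{\varepsilon\alpha(1+\gamma)n}\leq\sigma^{-\gamma/2}$ follows from $\sigma\leq e^{-\varepsilon_1\varepsilon_0 n/3}$ as soon as $\varepsilon\alpha(1+\gamma)\leq\varepsilon_1\varepsilon_0\gamma/6$, which again holds after shrinking $\varepsilon$. This yields the hypothesis of Lemma 6.2 with exponent $\gamma'=\gamma/2$, and Lemma 6.2 then delivers Theorem 6.1. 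The only delicate step is the compatibility of the various exponential ranges: one must verify that the universal constants $\varepsilon_1,\alpha,\delta_{AD}$ coming from the sum--product theorem, the Koebe-based geometry, and the regularity of $\mu$ all tolerate taking $\varepsilon$ arbitrarily small at the very end, but the whole strategy of the paper is structured precisely so that $\varepsilon$ plays the role of a last-to-fix parameter.
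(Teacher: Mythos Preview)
Your proof is correct and follows essentially the same route as the paper: pull back a Euclidean ball through $\log$ into an $L^\infty$-box of radius $O(R\sigma)$ in $\mathbb{C}/2i\pi\mathbb{Z}$, translate by $-2\lambda n$ to pass from $\zeta_{j,\mathbf{A}}$ to $g_{\mathbf{a}_{j-1}'\mathbf{b}}'$, check the rescaled radius lands in the wider range $[e^{-4\varepsilon_0 n},e^{-\varepsilon_1\varepsilon_0 n/4}]$, apply the hypothesis, and absorb the $R^{1+\gamma}$ loss by halving $\gamma$ before invoking Lemma~6.2. The only cosmetic difference is that the paper obtains the inclusion $B(a,\sigma)\subset\exp B_\infty((\ln r_0,\theta_0),4R\sigma)$ by an explicit trigonometric estimate, whereas you use the derivative bound $|(\log)'|\leq 2R$; both give the same result up to a harmless constant.
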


\begin{proof}

Suppose that the estimate is true.
Let $ \sigma \in [ e^{-5 \varepsilon_0 n} , e^{ - \varepsilon_1 \varepsilon_0 n/5 }  ] $. \\

Fix an euclidean ball $B(a,\sigma)$, where $a=r_0e^{i \theta_0}$ satisfies $r_0 \in [R^{-1},R]$ and $\theta_0 \in ]-\pi,\pi]$. Elementary trigonometry allows us to see that 
$$ B(a,\sigma) \subset \left\{ re^{i \theta} \in \mathbb{C} | \ r \in [r_0 - \sigma, r_0 + \sigma] , \ \theta \in [\theta_0 - \arctan(\sigma/r_0), \theta_0 + \arctan(\sigma/r_0) )] \ \right\} .$$

Then, since for $n$ large enough $$\ln(r_0+\sigma)-\ln(r_0-\sigma) = \ln(1+ \sigma r_0^{-1}) - \ln(1 - \sigma r_0^{-1}) \leq 4 \sigma r_0^{-1} \leq 4 \sigma R $$ and $$ 2\arctan(\sigma/r_0) \leq 4 \sigma r_0^{-1} \leq 4 \sigma R ,$$

we find that
$$ B(a,\sigma) \subset \exp B_\infty( (\ln(r_0),\theta_0), 4 \sigma R ) . $$

Hence:
$$  \#  \{ \mathbf{b} \in \mathcal{Z}_j,  \ \zeta_{j,\mathbf{A}}(\mathbf{b}) \in B(a,\sigma) \} \leq  \#  \{ \mathbf{b} \in \mathcal{Z}_j,  \ \zeta_{j,\mathbf{A}}(\mathbf{b}) \in \exp B_\infty( (\ln(r_0),\theta_0) , 4 R \sigma)  \} $$
$$ = \#  \{ \mathbf{b} \in \mathcal{Z}_j,  \ \log \zeta_{j,\mathbf{A}}(\mathbf{b}) \in \text{mod}_{2 i \pi} \left(  B_\infty( (\ln(r_0),\theta_0) , 4 R \sigma) \right) \}$$
$$ = \#  \{ \mathbf{b} \in \mathcal{Z}_j,  \ \log g_{\mathbf{a}_{j-1}' \mathbf{b}}'(x_{\mathbf{a}_j}) \in \text{mod}_{2 i \pi} \left(  B_\infty( (\ln(r_0) - 2 n \lambda,\theta_0) , 4 R \sigma) \right) \} $$
$$ \leq N (4 R \sigma)^{1+\gamma} \leq N \sigma^{1+\gamma/2} $$
provided $n$ is large enough. So the inequality of lemma 6.2 is satisfied, and so Theorem 6.1 is true. \end{proof}

\subsection{End of the proof}

We are going to prove that the estimate in lemma 6.3 is satisfied for all $C^1(V,\mathbb{R})$ potentials $\varphi$. (The dependence in $\varphi$ is hidden in the definition of the $\varphi$-regular words.) For this, we will need a generalization of a theorem, borrowed from $\cite{OW17}$.

\begin{theorem}

We work on $\mathfrak{U} := \bigsqcup_{a \in \mathcal{A}} U_a $, the formal disjoint union of the $U_a$. Define $$C^1_b(\mathfrak{U},\mathbb{C}) := \left\{ \mathfrak{h} = (h_a)_{a \in \mathcal{A}} \ | \ h_a \in C^1(U_a,\mathbb{C}) , \ \| (h_a)_{a} \|_{C^1_b(\mathfrak{U},\mathbb{C}) }< \infty \right\} , $$ where $\| \cdot \|_{C^1_b(\mathfrak{U},\mathbb{C})}$ is the usual $C^1$ norm
$$ \| \mathfrak{h} \|_{C^1_b(\mathfrak{U},\mathbb{C})} = \sum_{a \in \mathcal{A}} \left( \|h_a\|_{\infty,U_a} + \| \nabla h_a \|_{\infty,U_a} \right). $$

On this Banach space, for $\varphi$ a normalized potential, $s \in \mathbb{C}$ and $l \in \mathbb{Z}$, we define a twisted transfer operator $\mathcal{L}_{\varphi,s,l} : C^1_b(\mathfrak{U},\mathbb{C}) \rightarrow C^1_b(\mathfrak{U},\mathbb{C}) $ as follows:

$$ \forall x \in U_b, \ \mathcal{L}_{\varphi,s,l}\mathfrak{h}(x) := \sum_{M_{ab}=1} e^{\varphi(g_{ab}(x))} |g_{ab}'(x)|^s \left(\frac{g_{ab}'(x)}{|g_{ab}'(x)|} \right)^{-l} \mathfrak{h}( g_{ab}(x) ) ,$$

where $g_{ab} : U_b \rightarrow U_a$. Iterating this transfer operator yields:

$$ \forall x \in U_b, \ \mathcal{L}_{\varphi,s,l}^n \mathfrak{h} (x) = \underset{\mathbf{a} \rightsquigarrow b}{\sum_{\mathbf{a} \in \mathcal{W}_{n+1}}} w_{\mathbf{a}}(x) |g_{\mathbf{a}}'(x)|^s \left(\frac{g_{\mathbf{a}}'(x)}{|g_{\mathbf{a}}'(x)|} \right)^{-l} \mathfrak{h}(g_{\mathbf{a}}(x)) .$$

Since $J$ is supposed to be not included in a circle, we have the following result. There exists $C>0$ and $\rho<1$ such that, for any $s \in \mathbb{C}$ such that $\text{Re}(s)=0$ and $|\text{Im}(s)|+|l| >1$,

$$ \| \mathcal{L}_{\varphi,s,l}^n \|_{C^1_b(\mathfrak{U},\mathbb{C})} \leq C_0 (|\text{Im}(s)|+|l|)^2 \rho^n .$$

\end{theorem}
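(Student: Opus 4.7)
The plan is to run a Dolgopyat-type contraction argument for the twisted transfer operator. Set $s = it$ with $t \in \mathbb{R}$ and $r := |t| + |l| > 1$; write the complex weight as $e^{\varphi(g_{\mathbf{a}} x)} e^{i \Phi_{\mathbf{a}}(x)}$ with phase
$$ \Phi_{\mathbf{a}}(x) := t \log|g_{\mathbf{a}}'(x)| - l \arg g_{\mathbf{a}}'(x). $$
A direct Lasota--Yorke computation, using the exponentially decreasing variations of $\varphi$ from Remark 2.1 together with the hyperbolicity bound $|g_{\mathbf{a}}'| \lesssim \kappa^{-n}$, yields
$$ \|\mathcal{L}_{\varphi,it,l}^n \mathfrak{h}\|_\infty \leq \|\mathfrak{h}\|_\infty,\qquad \|\nabla \mathcal{L}_{\varphi,it,l}^n \mathfrak{h}\|_\infty \leq A r\, \|\mathfrak{h}\|_\infty + B \kappa^{-n} \|\nabla \mathfrak{h}\|_\infty. $$
This reduces the theorem to producing, over $n_0 \simeq \log r$ iterations, an $L^\infty$-contraction factor $1 - \eta_0 < 1$ independent of $r$; the announced polynomial prefactor $r^2$ then absorbs the Lasota--Yorke normalization together with a subsequent $L^2 \to L^\infty$ conversion.

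The central new ingredient is a two-parameter non-local integrability condition (NLI) for the joint phase. For the modulus-only operator ($l = 0$), Theorem 3.9 of \cite{OW17} gives admissible words $\mathbf{a}, \mathbf{b}$, a ball $U$, and $c > 0$ with $|\nabla \Delta_{\mathbf{a}\mathbf{b}} \log |g'|| \geq c$ on $U \cap J$, where $\Delta_{\mathbf{a}\mathbf{b}} F := F \circ g_{\mathbf{a}} - F \circ g_{\mathbf{b}}$. I would extend this to $\Phi$ via a \emph{rigidity alternative}: either the real-analytic functions $\Delta_{\mathbf{a}\mathbf{b}} \log|g'|$ and $\Delta_{\mathbf{a}\mathbf{b}} \arg g'$ are linearly independent modulo locally constant functions at some point of $J$ --- whence one can choose a pair $(\mathbf{a}, \mathbf{b})$ and a point $x_0 \in U \cap J$ with $|t\, \nabla \Delta_{\mathbf{a}\mathbf{b}} \log|g'|(x_0) - l\, \nabla \Delta_{\mathbf{a}\mathbf{b}} \arg g'(x_0)| \geq c' r$ --- or else their dependence, combined with the Cauchy--Riemann equations applied to the holomorphic quantity $\log(g_{\mathbf{a}}'/g_{\mathbf{b}}')$, forces $g_{\mathbf{a}}'/g_{\mathbf{b}}'$ to be locally constant on $J$. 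Real-analytic propagation along the expanding dynamics (a Zdunik-type rigidity argument) then forces $J$ into a real-analytic $f$-invariant curve, and the classification of such curves for hyperbolic rational maps forces $J$ into a circle, contradicting our standing assumption.

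With NLI established, the remaining argument is the standard Dolgopyat scheme. Choose $n_0 \simeq \log r$ so that the pieces $U_{\mathbf{a}}$, $\mathbf{a} \in \mathcal{W}_{n_0+1}$, have diameter $\simeq r^{-1}$; on each such piece the phase $\Phi_{\mathbf{a}}$ is essentially affine. Using the NLI pair, attach to each $\mathbf{a}$ a subset $J(\mathbf{a}) \subset U_{\mathbf{a}}$ of $\mu_\varphi$-measure comparable to $\mu_\varphi(U_{\mathbf{a}})$ on which two neighbouring branches contribute with phase difference of size $\Theta(1)$, producing cancellation. Define the Dolgopyat operator
$$ (\mathcal{N}_J H)(x) := \underset{\mathbf{a} \rightsquigarrow b}{\sum_{\mathbf{a} \in \mathcal{W}_{n_0+1}}} e^{S_{n_0}\varphi(g_{\mathbf{a}} x)} \bigl(1 - \beta\, \chi_{J(\mathbf{a})}(x)\bigr) H(g_{\mathbf{a}} x), $$
for a small $\beta > 0$. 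Standard estimates yield $|\mathcal{L}_{\varphi,it,l}^{n_0} H|(x) \leq (\mathcal{N}_J |H|)(x)$ for $H$ in a cone of slowly varying positive functions, and $\|\mathcal{N}_J H\|_{L^2(\mu_\varphi)} \leq (1 - \eta_0) \|H\|_{L^2(\mu_\varphi)}$ with $\eta_0 > 0$ independent of $r$. Iterating this $L^2$ contraction $N \simeq n/\log r$ times and converting back to $C^1_b$ using the Lasota--Yorke inequality above, together with the upper regularity of $\mu_\varphi$ from Proposition 2.8, yields the bound $C_0 r^2 \rho^n$. The principal obstacle is the second paragraph: the rigidity dichotomy for the joint modulus-argument phase needs genuinely new geometric input beyond the scalar framework of \cite{OW17}, and it is here that the hypothesis that $J$ not be contained in a circle is actually used.
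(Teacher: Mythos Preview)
Your outline follows the same Dolgopyat strategy as the paper (Appendix B), and your NLI paragraph is essentially correct: since $\log(g_{\mathbf a}'/g_{\mathbf b}')$ is holomorphic, the Cauchy--Riemann equations make $\nabla\Delta_{\mathbf{ab}}\log|g'|$ and $\nabla\Delta_{\mathbf{ab}}\arg g'$ orthogonal of equal length, so the Jacobian of $(\tilde\tau,\tilde\theta)$ equals $|\nabla\tilde\tau|^2$ and the two-parameter NLI reduces to the scalar one from \cite{OW17}. Your dichotomy is therefore not really ``linear independence versus dependence'': by Cauchy--Riemann the two functions are either both locally constant or automatically independent. But the conclusion is right.

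There is, however, a genuine gap at the sentence ``attach to each $\mathbf a$ a subset $J(\mathbf a)\subset U_{\mathbf a}$ of $\mu_\varphi$-measure comparable to $\mu_\varphi(U_{\mathbf a})$ on which two neighbouring branches contribute with phase difference of size $\Theta(1)$''. The NLI gives $|\nabla\Phi|\gtrsim r$, but the \emph{direction} of $\nabla\Phi$ depends on $(t,l)$ and, again by Cauchy--Riemann, rotates through every direction as $(t,l)$ varies on the circle. To find points of $J$ at scale $r^{-1}$ where the phase actually differs by $\Theta(1)$ you must be able to move inside $J$ along an arbitrary prescribed direction; if $J$ were locally tangent to the level sets of $\Phi$ you would get no cancellation at all. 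The paper (following \cite{OW17}) isolates this as a separate geometric input, the NCP: for every $x\in P_{\mathbf a}$, every unit vector $w$, and every $\varepsilon\in(0,1)$, the set $B(x,\varepsilon)\cap\{y\in P_{\mathbf a}:|\langle y-x,w\rangle|>\delta\varepsilon\}$ is nonempty. This is what fails when $J$ lies in a smooth curve, and it is a second, independent place where the hypothesis ``$J$ not in a circle'' enters (via \cite{ES11}). Your outline uses the hypothesis only once, in the NLI; the construction of $J(\mathbf a)$ is not ``standard'' without NCP.

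A smaller point: the measure input you need is the \emph{doubling} property of each $\mu_a=\mu_\varphi|_{P_a}$, not merely the upper regularity of Proposition 2.8. Doubling is what allows the comparison $\int_J \mathcal L_\varphi^N(H^2)\,d\mu \le C_0\int_{\widehat S}\mathcal L_\varphi^N(H^2)\,d\mu$ in the $L^2$-contraction step; upper regularity alone does not give a lower bound on $\mu(\widehat S)$ relative to the ambient piece. The paper proves doubling separately (its Theorem in Appendix B, relying on \cite{PW97}).
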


It means that this twisted transfer operator is eventually uniformly contracting for large $l$ and $\text{Im}(s)$. This theorem will play another key role in this paper. 

\begin{remark}

In \cite{OW17}, the theorem was proved for the conformal measure. In $\cite{ShSt20}$, section 3.3, Sharp and Stylianou explain how we can generalize the theorem for a more general family of potentials, which covers the case of the measure of maximal entropy. The fully general theorem can be proved with some very minor modifications from the proof developed in $\cite{OW17}$: it will be explained in appendix B.
\end{remark}

\begin{proposition}

Define $\varepsilon_0 := \min( - \ln(\rho)/30,  \lambda/2  ) $. There exists $\gamma>0$ such that, \newline for $\sigma \in [e^{-6 \varepsilon_0 n} , e^{ - \varepsilon_1 \varepsilon_0 n/6 }  ]$ and if $n$ is large enough, $$\sup_{a} \#  \left\{ \mathbf{b} \in \mathcal{Z}_j,  \ -\log g_{\mathbf{a}_{j-1}' \mathbf{b}}' (x_{\mathbf{a}_j}) \in \emph{\text{mod}}_{2 i \pi} \left(B_\infty(a,\sigma) \right) \right\} \leq N \sigma^{1+\gamma} .$$

\end{proposition}

\begin{proof}
In the proof to come, all the $\simeq$ or $\lesssim$ will be uniform in $a$: the only relevant information here will be $\sigma$.
So fix $\sigma \in [ e^{- 6 \varepsilon_0 n} , e^{ - \varepsilon_1 \varepsilon_0 n/6 }  ]$, and fix a small square  $\text{mod}_{2 i \pi} \left(B_\infty(a,\sigma)\right) \subset \mathbb{C}/2 i \pi \mathbb{Z}$. The area of this square is $\sigma^2$.
Lift this square somewhere in $\mathbb{C}$, for example as $B_\infty(a,\sigma)$, and then define a bump function $\chi$ such that $\chi = 1$ on $B_\infty(a,\sigma)$, $\text{supp}(\chi) \subset B_\infty(a,2 \sigma)$ and such that $ \| \chi \|_{L^1(\mathbb{C})} \simeq \sigma^2 $. We can suppose that $ \| \partial_x^{k_1} \partial_y^{k_2} \chi \|_{L^1(\mathbb{C})} \simeq \sigma^{2-k_1-k_2} $. (For example, take $\chi(x) := \chi_0((x-a)\sigma^{-1})$ for $\chi_0$ a bump function around 0.) \\

Then, we can consider $h$, the $2 \pi \mathbb{Z}[i] := 2\pi(\mathbb{Z} + i \mathbb{Z})$ periodic map obtained by periodizing $\chi$. We can see it either as a smooth $2 \pi \mathbb{Z}[i]$ periodic map on $\mathbb{C}$, or as a smooth $2 \pi \mathbb{Z}$-periodic map on $\mathbb{C}/2 i \pi \mathbb{Z}$, or just as a smooth map on $\mathbb{C}/2  \pi \mathbb{Z}[i]$. \\

Then by construction, the periodicity of $h$ allows us to see that
$$ \mathbb{1}_{ \text{mod}_{2i \pi} \left( B_\infty(a,\sigma) \right) } \leq h .$$
Moreover, $h\left(-\log g_{\mathbf{a}_{j-1}' \mathbf{b}}' \right)(x_{\mathbf{a}_j}) $ is well defined, and so we can bound the desired cardinality with it. We have the following \say{convex combination} bound:

$$ \#  \left\{ \mathbf{b} \in \mathcal{Z}_j,  \ -\log g_{\mathbf{a}_{j-1}' \mathbf{b}}' (x_{\mathbf{a}_j}) \in \text{mod}_{2 i \pi} \left(B_\infty(a,\sigma) \right) \right\} \leq \sum_{\mathbf{b} \in \mathcal{Z}_j} h(-\log g_{\mathbf{a}_{j-1}' \mathbf{b}}' (x_{\mathbf{a}_j}) ) $$
$$ =  \sum_{\mathbf{b} \in \mathcal{Z}_j} \frac{w_{ \mathbf{b}}(x_{\mathbf{a}_j})}{w_{ \mathbf{b}}(x_{\mathbf{a}_j})}  h(-\log g_{\mathbf{a}_{j-1}' \mathbf{b}}' (x_{\mathbf{a}_j}) ) $$
$$ \leq R N{\sum_{\mathbf{b} \in \mathcal{Z}_{j}}} w_{ \mathbf{b}}(x_{\mathbf{a}_j}) h(-\log g_{\mathbf{a}_{j-1}' \mathbf{b}}' (x_{\mathbf{a}_j}) ) $$
$$ \leq R N \underset{\mathbf{a}_{j-1} \rightsquigarrow \mathbf{b} \rightsquigarrow \mathbf{a}_j}{\sum_{\mathbf{b} \in \mathcal{W}_{n+1}}} w_{ \mathbf{b}}(x_{\mathbf{a}_j}) h(-\log g_{\mathbf{a}_{j-1}' \mathbf{b}}' (x_{\mathbf{a}_j}) ) .$$
Then, since our map $h$ is $2 \pi \mathbb{Z}[i]$-periodic and smooth, we can develop it using Fourier series. We can write:
$$ \forall z=x+iy \in \mathbb{C}/2 i \pi \mathbb{Z}, \ h(z) = \sum_{(\mu,\nu) \in \mathbb{Z}^2 } c_{\mu \nu}(h) e^{i \left( \mu x + \nu y \right)} ,$$

where $$c_{\mu \nu}(h) = (4 \pi^2)^{-1} \int_{B_\infty(a,\pi)} h(x + iy) e^{-i \left( \mu x + \nu y \right)} dx dy .$$ 
Notice that $$ \mu^{k_1} \nu^{k_2} |c_{\mu \nu}(h)| \simeq |c_{\mu \nu}( \partial_x^{k_1} \partial_y^{k_2} h)| $$ $$\leq (4 \pi^2)^{-1} \|\partial_x^{k_1} \partial_y^{k_2} h\|_{L^1(B_\infty(a,\pi))} = (4 \pi^2)^{-1} \|\partial_x^{k_1} \partial_y^{k_2} \chi\|_{L^1(\mathbb{C})} \simeq \sigma^{2-k_1 - k_2} .$$

Plugging $ -\log g_{\mathbf{a}_{j-1}' \mathbf{b}}' (x_{\mathbf{a}_j}) $ in this expression yields

$$ h\left( -\log g_{\mathbf{a}_{j-1}' \mathbf{b}}' (x_{\mathbf{a}_j}) \right) = \sum_{(\mu,\nu) \in \mathbb{Z}^2 } c_{\mu \nu}(h) \exp\left( - i \mu \ln( | g_{\mathbf{a}_{j-1}' \mathbf{b}}' (x_{\mathbf{a}_j}) |  ) - i \nu \arg   g_{\mathbf{a}_{j-1}' \mathbf{b}}' (x_{\mathbf{a}_j})  \right) $$
$$ =  \sum_{(\mu,\nu) \in \mathbb{Z}^2 } c_{\mu \nu}(h) | g_{\mathbf{a}_{j-1}' \mathbf{b}}' (x_{\mathbf{a}_j})|^{- i \mu} \left( \frac{ g_{\mathbf{a}_{j-1}' \mathbf{b}}' (x_{\mathbf{a}_j})}{| g_{\mathbf{a}_{j-1}' \mathbf{b}}' (x_{\mathbf{a}_j})|} \right)^{- \nu} , $$

and so
$$  \#  \left\{ \mathbf{b} \in \mathcal{Z}_j,  \ -\log g_{\mathbf{a}_{j-1}' \mathbf{b}}' (x_{\mathbf{a}_j}) \in \text{mod}_{2i \pi}\left(B_\infty(a,\sigma) \right) \right\} $$
$$ \leq R N  \sum_{\mu \nu} c_{\mu \nu}(h) \underset{\mathbf{a}_{j-1} \rightsquigarrow \mathbf{b} \rightsquigarrow \mathbf{a}_j}{\sum_{\mathbf{b} \in \mathcal{W}_{n+1}}} w_{ \mathbf{b}}(x_{\mathbf{a}_j}) | g_{\mathbf{a}_{j-1}' \mathbf{b}}' (x_{\mathbf{a}_j})|^{- i \mu} \left( \frac{ g_{\mathbf{a}_{j-1}' \mathbf{b}}' (x_{\mathbf{a}_j})}{| g_{\mathbf{a}_{j-1}' \mathbf{b}}' (x_{\mathbf{a}_j})|} \right)^{- \nu}  .$$

For any word $\mathbf{a}$, define $\mathfrak g_{\mathbf{a}}'$ on $C_b^1(\mathfrak{U},\mathbb{C})$ by
$$ \forall x \in U_{b(\mathbf{a})}, \ \mathfrak g_{\mathbf{a}}'(x) := g_{\mathbf{a}}'(x) \quad , \quad \forall x \in U_b , \ b \neq b(\mathbf{a}), \ \mathfrak g_{\mathbf{a}}'(x) := 0 .$$
With this notation, we may rewrite the sum on $\mathbf{b}$ as follows:
$$  \underset{\mathbf{a}_{j-1} \rightsquigarrow \mathbf{b} \rightsquigarrow \mathbf{a}_j}{\sum_{\mathbf{b} \in \mathcal{W}_{n+1}}} w_{ \mathbf{b}}(x_{\mathbf{a}_j}) | g_{\mathbf{a}_{j-1}' \mathbf{b}}' (x_{\mathbf{a}_j})|^{ - i \mu} \left( \frac{ g_{\mathbf{a}_{j-1}' \mathbf{b}}' (x_{\mathbf{a}_j})}{| g_{\mathbf{a}_{j-1}' \mathbf{b}}' (x_{\mathbf{a}_j})|} \right)^{- \nu} $$
$$ = \underset{\mathbf{a}_{j-1} \rightsquigarrow \mathbf{b} \rightsquigarrow \mathbf{a}_j}{\sum_{\mathbf{b} \in \mathcal{W}_{n+1}}}  | g_{\mathbf{a}_{j-1}}' (g_{ \mathbf{b}}(x_{\mathbf{a}_{j}}))|^{- i \mu} \left( \frac{ g_{\mathbf{a}_{j-1} }' (g_{ \mathbf{b}}(x_{\mathbf{a}_{j}} ))} {| g_{\mathbf{a}_{j-1}}' (g_{ \mathbf{b}}(x_{\mathbf{a}_{j}}))|} \right)^{- \nu}  w_{\mathbf{b}}(x_{\mathbf{a}_j}) |g_{\mathbf{b}}' (x_{\mathbf{a}_j})|^{- i \mu} \left( \frac{ g_{ \mathbf{b}}' (x_{\mathbf{a}_j})}{| g_{ \mathbf{b}}' (x_{\mathbf{a}_j})|} \right)^{- \nu} $$
$$ = \underset{\mathbf{b} \rightsquigarrow \mathbf{a}_{j-1} }{\sum_{\mathbf{b} \in \mathcal{W}_{n+1}}} w_{\mathbf{b}}(x_{\mathbf{a}_j}) | g_{\mathbf{b}}' (x_{\mathbf{a}_j})|^{- i \mu} \left( \frac{ g_{ \mathbf{b}}' (x_{\mathbf{a}_j})}{| g_{ \mathbf{b}}' (x_{\mathbf{a}_j})|} \right)^{- \nu}  \left( |\mathfrak g_{\mathbf{a}_{j-1}}'|^{-i \mu}  \left(\frac{\mathfrak g_{\mathbf{a}_{j-1}}'}{|\mathfrak g_{\mathbf{a}_{j-1}}'|} \right)^{- \nu} \right) \left( g_{\mathbf{b}}(x_{\mathbf{a}_j}) \right) $$
$$ = \mathcal{L}_{\varphi,-i \mu,\nu}^n \left(  |\mathfrak g_{\mathbf{a}_{j-1}}'|^{-i \mu}  \left(\frac{\mathfrak g_{\mathbf{a}_{j-1}}'}{|\mathfrak g_{\mathbf{a}_{j-1}}'|} \right)^{- \nu} \right)(x_{\mathbf{a}_j}). $$

For clarity, set $\mathfrak{h}_{\mathbf{A},j} :=  |\mathfrak g_{\mathbf{a}_{j-1}}'|^{-i \mu}  \left(\frac{\mathfrak g_{\mathbf{a}_{j-1}}'}{|\mathfrak g_{\mathbf{a}_{j-1}}'|} \right)^{- \nu}  $.
A direct computation, and the holomorphicity of the $(g_{\mathbf{a}})_\mathbf{a}$ allows us to see that
$$ \| \mathfrak{h}_{\mathbf{A},j} \|_{C_b^1(\mathfrak{U},\mathbb{C})} \lesssim (1+|\mu|+|\nu|). $$

We can now break the estimate into two pieces: high frequencies are controlled by the contraction property of this transfer operator, and the low frequencies are controlled by the Gibbs property of $\mu$. We also use the estimates on the Fourier coefficients on $h$. 

$$ \#  \left\{ \mathbf{b} \in \mathcal{Z}_j,  \ -\log g_{\mathbf{a}_{j-1}' \mathbf{b}}' (x_{\mathbf{a}_j}) \in \text{mod}_{2 i \pi} \left(B_\infty(a,\sigma) \right) \right\}  $$
$$ \leq R N \sum_{\mu \nu} c_{\mu \nu}(h)\underset{\mathbf{a}_{j-1} \rightsquigarrow \mathbf{b} \rightsquigarrow \mathbf{a}_j}{\sum_{\mathbf{b} \in \mathcal{W}_{n+1}}}  w_{\mathbf{b}}(x_{\mathbf{a}_j}) | g_{\mathbf{a}_{j-1}' \mathbf{b}}' (x_{\mathbf{a}_j})|^{ - i \mu} \left( \frac{ g_{\mathbf{a}_{j-1}' \mathbf{b}}' (x_{\mathbf{a}_j})}{| g_{\mathbf{a}_{j-1}' \mathbf{b}}' (x_{\mathbf{a}_j})|} \right)^{- \nu} $$
$$ \leq R N \left( \sum_{|\mu| + |\nu| \leq 1} |c_{\mu \nu}(h)|  \sum_{\mathbf{b} \in \mathcal{W}_{n+1}}  w_{\mathbf{b}}(x_{\mathbf{a}_j})  +  \sum_{|\mu| + |\nu| > 1} |c_{\mu \nu}(h)| |\mathcal{L}_{\varphi,-i \mu,\nu}^n (\mathfrak{h}_{\mathbf{A},j})(x_{\mathbf{a}_j})| \right) $$
$$ \lesssim R N \left( 5 \sigma^2 \sum_{\mathbf{b} \in \mathcal{W}_{n+1}} \mu(P_{\mathbf{b}})  + \sum_{|\mu| + |\nu| > 1} |c_{\mu \nu}(h)| \|\mathcal{L}_{\varphi,-i \mu,\nu}^n (\mathfrak{h}_{\mathbf{A},j})\|_{C_b^1(\mathfrak{U},\mathbb{C})} \right) $$
$$ \lesssim  R N \sigma^2 +R N  \sum_{|\mu| + |\nu| > 1} |c_{\mu \nu}(h)| (|\mu|+|\nu|)^2 \rho^n \| \mathfrak{h}_{\mathbf{A},j} \|_{C^1_b(\mathfrak{U},\mathbb{C})} $$

$$ \lesssim  R N \sigma^2 + R N \rho^n \sum_{|\mu| + |\nu| > 1} |c_{\mu \nu}(h)| (|\mu|+|\nu|)^5 (|\mu|+|\nu|)^{-2}    $$
$$ \leq C R N (\sigma^2 + \rho^n \sigma^{-3}) , $$
for some constant $C>0$.
We are nearly done. Since $\sigma \in  [ e^{-6 \varepsilon_0 n} , e^{ - \varepsilon_1 \varepsilon_0 n/6 }  ]$, we know that $ \sigma^{-3} \leq e^{18 \varepsilon_0 n } $.
Now is the time where we fix $\varepsilon_0$: choose $$ \varepsilon_0 := \min( - \ln(\rho)/30,  \lambda/2  ). $$

Then $ \rho^n \sigma^{-3} \leq  e^{ n(\ln(\rho) + 18 \varepsilon_0 )} \leq  e^{ - 12 \varepsilon_0 n} \leq \sigma^2 $ for $n$ large enough. Hence, we get

$$  \#  \left\{ \mathbf{b} \in \mathcal{Z}_j,  \ -\log g_{\mathbf{a}_{j-1}' \mathbf{b}}' (x_{\mathbf{a}_j}) \in \text{mod}_{2 i \pi} \left(B_\infty(a,\sigma) \right) \right\} \leq 2 C R N \sigma^{2} . $$

Finally, since $\sigma^{1/2}$ is quickly decaying compared to $R$, we have

$$ 2 C R N \sigma^{2} \leq N \sigma^{3/2} $$

provided $n$ is large enough. The proof is done.
\end{proof}

\begin{appendices}

\section{Large deviations.}

The goal of this section is to prove the large deviation Theorem 2.8, by using properties of the pressure. \\

The link between the spectral radius of $\mathcal{L}_\varphi$ and the pressure given by the Perron-Frobenius-Ruelle theorem allows us to get the following useful formula. We extract the first one from $\cite{Ru78}$, Theorem 7.20 and remark 7.28, and the second one from $\cite{Ru89}$, lemma 4.5.

\begin{proposition}

 $$ P(\varphi) = \lim_{n \rightarrow \infty} \frac{1}{n} \log \sum_{f^n(x)=x} e^{S_n \varphi(x)}$$ $$\quad P(\varphi) = \lim_{n \rightarrow \infty} \frac{1}{n} \max_{b \in \mathcal{A}} \sup_{x \in P_b} \log \underset{\mathbf{a} \rightsquigarrow b}{\sum_{\textbf{a} \in \mathcal{W}_{n+1}} }  e^{S_n \varphi( g_{\textbf{a}}(x) )}  $$

\end{proposition}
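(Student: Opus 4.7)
The plan is to derive both identities from the Perron-Frobenius-Ruelle theorem (Theorem 2.6) together with the exponentially decreasing variations of $\varphi$ recorded in Remark 2.1. For the second identity, applying the iterated transfer operator to the constant function $1$ gives, for $x \in P_b$,
$$ \mathcal{L}_\varphi^n 1(x) = \underset{\mathbf{a} \rightsquigarrow b}{\sum_{\mathbf{a} \in \mathcal{W}_{n+1}}} e^{S_n\varphi(g_{\mathbf{a}}(x))}. $$
PFR furnishes $h \in C^1(U)$, strictly positive on the compact set $J$, satisfying $\mathcal{L}_\varphi h = e^{P(\varphi)} h$, hence $\mathcal{L}_\varphi^n h = e^{nP(\varphi)} h$. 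Choosing $0 < c \leq C$ with $c \leq h \leq C$ on $J$ and using the positivity of $\mathcal{L}_\varphi$, I would sandwich $(c/C)\, e^{nP(\varphi)} \leq \mathcal{L}_\varphi^n 1(x) \leq (C/c)\, e^{nP(\varphi)}$ uniformly in $x \in J$. Taking $\tfrac{1}{n}\log$ and sending $n \to \infty$ then yields the second formula, the $\max_b \sup_{x \in P_b}$ being absorbed by the uniform constants.

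For the first identity, I would use the Markov structure to parametrize $n$-periodic points: every admissible loop $\mathbf{a} = a_1 \dots a_n a_1 \in \mathcal{W}_{n+1}$ produces a holomorphic contraction $g_{\mathbf{a}} : D_{a_1} \to D_{a_1}$ (by the hyperbolicity bound $|g_{\mathbf{a}}'| \leq c_0^{-1}\kappa^{-n}$), hence has a unique fixed point $x_{\mathbf{a}} \in P_{\mathbf{a}}$, and conversely every $n$-periodic point of $f$ arises this way. Thus
$$ \sum_{f^n(x)=x} e^{S_n \varphi(x)} \;=\; \underset{a_{n+1}=a_1}{\sum_{\mathbf{a} \in \mathcal{W}_{n+1}}} e^{S_n \varphi(x_{\mathbf{a}})}. $$
The exponentially decreasing variations of $\varphi$ imply that $|S_n \varphi(x_{\mathbf{a}}) - S_n\varphi(g_{\mathbf{a}}(y))| = O(1)$ uniformly in $\mathbf{a}$ (both points live in $P_{\mathbf{a}}$ and the variations sum to a geometric series), so for any fixed $y \in P_{a_1}$ the periodic sum is comparable up to a uniform multiplicative constant to the \emph{restricted} loop sum $\underset{a_{n+1}=a_1}{\sum_{\mathbf{a}}} e^{S_n \varphi(g_{\mathbf{a}}(y))}$.

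The main obstacle is then to compare this restricted loop sum with the \emph{full} transfer operator sum $\sum_{\mathbf{a} \in \mathcal{W}_{n+1}} e^{S_n \varphi(g_{\mathbf{a}}(y))}$ that governs the second formula. The key tool will be the topological mixing of the subshift: some power $M^N$ of the incidence matrix has strictly positive entries. I would use this to append a bounded-length bridge $\mathbf{c} \in \mathcal{W}_{N+1}$ to any $\mathbf{a} \in \mathcal{W}_{n+1}$ so that the concatenation closes into an admissible loop of length $n+N$; this alters the Birkhoff sum by at most $N\|\varphi\|_{\infty,J}$, and each loop arises from at most $|\mathcal{A}|^N$ pairs $(\mathbf{a},\mathbf{c})$, yielding the non-trivial inequality, while the reverse one is tautological since every loop is admissible. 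Dividing by $n$ and passing to the limit the additive $O(N)$ shifts and multiplicative $|\mathcal{A}|^N$ factors disappear, collapsing the restricted and full sums to the same exponential rate, which the second identity identifies with $P(\varphi)$.
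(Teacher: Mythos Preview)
The paper does not actually prove this proposition: the sentence preceding it simply cites the two formulas from Ruelle's works (theorem~7.20 and remark~7.28 of \cite{Ru78} for the periodic-orbit formula, lemma~4.5 of \cite{Ru89} for the transfer-operator formula), and no proof environment follows. So there is nothing to compare against line by line; you are supplying a self-contained argument where the paper only gives references.

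Your argument is correct. The second identity is exactly the spectral-radius interpretation of $P(\varphi)$: the sandwich $c \leq h \leq C$ on the compact set $J$, combined with positivity of $\mathcal{L}_\varphi$ and the eigenequation $\mathcal{L}_\varphi^n h = e^{nP(\varphi)} h$, gives $\tfrac{1}{n}\log \mathcal{L}_\varphi^n 1(x) \to P(\varphi)$ uniformly on $J$, which is precisely the stated limit. For the first identity, your coding of $n$-periodic points by admissible loops, the bounded-distortion comparison via exponentially decreasing variations, and the $N$-step mixing bridge to pass between the loop sum and the full transfer-operator sum are all standard and correct; the only point you might mention explicitly is that boundary periodic points (those lying in $\partial P_a \cap \partial P_b$) may be coded by more than one loop, but this overcounting is by a bounded factor and disappears after taking $\tfrac{1}{n}\log$. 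In fact the paper itself reuses exactly this mixing-bridge idea in the proof of the next lemma (Lemma~A.2), so your approach is very much in the spirit of the surrounding text.
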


We begin by proving another avatar of those spectral radius formulas (which is nothing new).

\begin{lemma}

Choose any $x_{\textbf{a}}$ in each of the $P_{\textbf{a}}$, $\textbf{a} \in \mathcal{W}_{n}$, $\forall n$.
Then
$$ P(\varphi) = \lim_n \frac{1}{n} \log \sum_{\textbf{a} \in \mathcal{W}_{n+1}} e^{S_n \varphi(x_\textbf{a})} .$$

\end{lemma}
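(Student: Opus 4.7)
The plan is to sandwich $\sum_{\textbf{a} \in \mathcal{W}_{n+1}} e^{S_n \varphi(x_\textbf{a})}$ between multiplicative constants times the quantity $\max_{b \in \mathcal{A}} \sup_{x \in P_b} \sum_{\textbf{a} \rightsquigarrow b} e^{S_n \varphi(g_\textbf{a}(x))}$ already appearing in the second formula of Proposition A.1. The key tool is the bounded distortion of Birkhoff sums that comes from the exponentially decreasing variations of $\varphi$ established in Remark 2.1.

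First I would verify the following bounded distortion estimate: there exists a constant $C > 0$, independent of $n$, such that
$$ \forall \textbf{a} \in \mathcal{W}_{n+1}, \ \forall x,y \in P_\textbf{a}, \ |S_n \varphi(x) - S_n \varphi(y)| \leq C. $$
This follows by splitting $S_n \varphi(x) - S_n \varphi(y) = \sum_{k=0}^{n-1} (\varphi(f^k x) - \varphi(f^k y))$, noting that $f^k(P_\textbf{a}) \subset P_{a_{k+1} \dots a_{n+1}}$ has diameter $\lesssim \kappa^{-(n-k)}$ by the discussion of Remark 2.1, and using the Lipschitz bound $|\varphi(u)-\varphi(v)| \leq C_\varphi |u-v|$ together with the convergent geometric series $\sum \kappa^{-j}$.

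Next, I would exploit this as follows. For any fixed $b \in \mathcal{A}$, any $x \in P_b$ and any $\textbf{a} \rightsquigarrow b$, the point $g_\textbf{a}(x)$ lies in $P_\textbf{a}$, so
$$ e^{-C} e^{S_n \varphi(g_\textbf{a}(x))} \leq e^{S_n \varphi(x_\textbf{a})} \leq e^{C} e^{S_n \varphi(g_\textbf{a}(x))}. $$
Summing over $\textbf{a} \rightsquigarrow b$ and then over $b \in \mathcal{A}$ gives, on the one hand,
$$ \sum_{\textbf{a} \in \mathcal{W}_{n+1}} e^{S_n \varphi(x_\textbf{a})} \leq e^{C} |\mathcal{A}| \, \max_{b \in \mathcal{A}} \sup_{x \in P_b} \sum_{\textbf{a} \rightsquigarrow b} e^{S_n \varphi(g_\textbf{a}(x))}, $$
and on the other hand, keeping only one value of $b$ and taking the supremum in $x \in P_b$,
$$ \sum_{\textbf{a} \in \mathcal{W}_{n+1}} e^{S_n \varphi(x_\textbf{a})} \geq e^{-C} \max_{b \in \mathcal{A}} \sup_{x \in P_b} \sum_{\textbf{a} \rightsquigarrow b} e^{S_n \varphi(g_\textbf{a}(x))}. $$

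Finally, I would take $\frac{1}{n}\log$ on both sides and let $n \to \infty$. The additive contributions $\pm C/n$ and $\log |\mathcal{A}|/n$ vanish, and by the second formula of Proposition A.1 both sides converge to $P(\varphi)$, which yields the claim. There is no real obstacle here: the only substantive input beyond Proposition A.1 is the bounded distortion estimate, which is an immediate consequence of Remark 2.1 and in particular does not even require the full strength of the regularity of $\varphi$.
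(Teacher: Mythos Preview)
Your proof is correct and in fact cleaner than the paper's. Both arguments rest on the same bounded distortion estimate $|S_n\varphi(x)-S_n\varphi(y)|\leq C$ for $x,y\in P_{\mathbf a}$, and both compare the full sum $\sum_{\mathbf a\in\mathcal W_{n+1}}e^{S_n\varphi(x_{\mathbf a})}$ with the quantity $\max_b\sup_{x\in P_b}\sum_{\mathbf a\rightsquigarrow b}e^{S_n\varphi(g_{\mathbf a}(x))}$ from Proposition~A.1. The difference is in the upper bound: the paper invokes topological mixing (the existence of $N$ with $M^N>0$) to append a bridge word $\mathbf c\in\mathcal W_N$ to each $\mathbf a$ so that the extended word terminates at the optimal letter $b^{(n+N+1)}$, and only then applies bounded distortion. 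You instead observe directly that $\mathcal W_{n+1}=\bigsqcup_{b\in\mathcal A}\{\mathbf a:\mathbf a\rightsquigarrow b\}$, bound each subsum by the maximum, and pay only a harmless factor $|\mathcal A|$. This sidesteps the mixing argument entirely and avoids the shift in $n$ by $N$; the paper's detour buys nothing here, since the final-letter decomposition already matches the index set of the target sum.
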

\begin{proof}

Since $P_{\textbf{a}}$ is compact, and by continuity, for every $n$ there exists $b^{(n)} \in \mathcal{A}$ and $y_{b^{(n)}}^{(n)} \in P_{b^{(n)}}$ such that $$ \max_{b \in \mathcal{A}} \sup_{x \in P_b} \log \underset{\mathbf{a} \rightsquigarrow b}{\sum_{\textbf{a} \in \mathcal{W}_{n+1}} }  e^{S_n \varphi( g_{\textbf{a}}(x) )} = \log \underset{\mathbf{a} \rightsquigarrow b^{(n)}}{\sum_{\textbf{a} \in \mathcal{W}_{n+1}} } e^{S_n \varphi( g_{\textbf{a}}(y_{b^{(n)}}^{(n)}) )}. $$  
Define $y_{\textbf{a}} := g_{\textbf{a}}(y_{b^{(n)}}^{(n)}) \in P_{\textbf{a}}$ for clarity. The dependence on $n$ is not lost since it is contained in the length of the word. First of all, since $\varphi$ has exponentially vanishing variations, there exists a constant $C_1>0$ such that
$$ \forall x,y \in P_{\textbf{a}}, \ | S_n \varphi(x) - S_n \varphi(y) |\leq C_1 .$$

Now we want to relate the sums with the $x_\mathbf{a}$'s and the $y_{\mathbf{a}}$'s, but the indices are different. To do it properly, we are going to use the fact that $f$ is topologically mixing: there exists some $N \in \mathbb{N}$ such that the matrix $M^N$ has all its entries positive. In particular, it means that
$$ \forall b \in \mathcal{A}, \ \forall \textbf{a} \in \mathcal{W}_{n+1}, \ \exists \textbf{c} \in \mathcal{W}_{N}, \ \textbf{ac}b \in \mathcal{W}_{n+N+1} .$$
The point is that we are sure that the word is admissible. \\
For a given $\textbf{a} \in \mathcal{W}_{n+1}$, there exists a $\textbf{c} \in \mathcal{W}_{N}$ such that $ \textbf{ac}b^{(n+N+1)} \in \mathcal{W}_{n+N+1}$, and so, using the fact that $e^{S_n \varphi} \geq 0$, we get:

$$ e^{S_n \varphi(x_{\textbf{a}})} \leq  \underset{ \textbf{ac}b^{(n+N+1)} \in \mathcal{W}_{n+N+1}}{\sum_{\textbf{c} \in \mathcal{W}_{N} }} e^{C_1} e^{S_n \varphi(y_{\textbf{ac}b^{(n+N+1)}})} .$$
Then, since $S_{n}(\varphi) \leq S_{n+N}(\varphi) +N \|\varphi\|_{\infty,J}$, we have:

$$ e^{S_n \varphi(x_{\textbf{a}})} \leq  \underset{ \textbf{ac}b^{(n+N+1)} \in \mathcal{W}_{n+N+1}}{\sum_{\textbf{c} \in \mathcal{W}_{N} }} e^{C_2} e^{S_{n+N}(\varphi)(y_{\textbf{ac}b^{(n+N+1)}})} .$$
Hence
$$ \log \left( {\sum_{\textbf{a} \in \mathcal{W}_{n+1}} }  e^{S_n \varphi( x_{\textbf{a}} )} \right) \leq \log \left( \sum_{\textbf{a} \in \mathcal{W}_{n+1}} \underset{ \textbf{ac}b^{(n+N+1)} \in \mathcal{W}_{n+N+1}}{\sum_{\textbf{c} \in \mathcal{W}_{N} }} e^{C_2} e^{S_{n+N} \varphi(y_{\textbf{ac}b^{(n+N+1)}})}  \right) $$
$$ =C_2 + \log \left( \underset{\mathbf{d} \rightsquigarrow b^{(n+N+1)}}{\sum_{\textbf{d} \in \mathcal{W}_{n+N+1} }} e^{S_n \varphi(y_{\textbf{d}})} \right) ,$$
and so $$ \limsup_{n \rightarrow \infty} \frac{1}{n} \log \left( {\sum_{\textbf{a} \in \mathcal{W}_{n+1}} }  e^{S_n \varphi( x_{\textbf{a}} )}  \right) \leq P(\varphi) .$$

The other inequality is easier, we have

$$ \log \left( \underset{ \mathbf{a} \rightsquigarrow b^{(n)}}{\sum_{\textbf{a} \in \mathcal{W}_{n+1}} } e^{S_n \varphi( y_{\textbf{a}}  )} \right) \leq C_1 + \log \left( \sum_{\textbf{a} \in \mathcal{W}_{n+1}} e^{S_n \varphi(x_{\textbf{a}})} \right), $$
which gives us $$ P(\varphi) \leq \liminf \frac{1}{n} \log \left( \sum_{\textbf{a} \in \mathcal{W}_{n+1}} e^{S_n \varphi(x_{\textbf{a}})} \right) . $$ \end{proof}

Another useful formula is the computation of the differential of the pressure.

\begin{theorem}

The map $P : C^1(U,\mathbb{R}) \rightarrow \mathbb{R} $ is differentiable. 
If $\varphi \in C^1(U,\mathbb{R})$ is a normalized potential, then we have:

$$ \forall \psi \in C^1(U,\mathbb{R}), \  (dP)_{\varphi}(\psi) = \int_J \psi d\mu_{\varphi} .$$

\end{theorem}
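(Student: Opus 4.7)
The plan is to rely on the variational principle from Definition 2.1, which gives the explicit formula $P(\varphi) = h_f(\mu_\varphi) + \int \varphi \, d\mu_\varphi$ together with uniqueness of the maximizer. The two-sided inequality that drops out of ``testing'' the variational principle against the wrong measure will pinch the difference quotient, and weak-$*$ continuity of $\varphi \mapsto \mu_\varphi$ will close the gap.

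First, fix $\psi \in C^1(U, \mathbb{R})$ and $t \neq 0$. Testing the variational principle for $P(\varphi + t\psi)$ against the measure $\mu_\varphi$ yields
$$ P(\varphi + t\psi) \geq h_f(\mu_\varphi) + \int (\varphi + t\psi) \, d\mu_\varphi = P(\varphi) + t \int \psi \, d\mu_\varphi. $$
Conversely, testing the variational principle for $P(\varphi)$ against $\mu_{\varphi + t\psi}$ gives
$$ P(\varphi) \geq h_f(\mu_{\varphi + t\psi}) + \int \varphi \, d\mu_{\varphi + t\psi} = P(\varphi + t\psi) - t \int \psi \, d\mu_{\varphi + t\psi}. $$
For $t > 0$ these combine into
$$ \int \psi \, d\mu_\varphi \; \leq \; \frac{P(\varphi + t\psi) - P(\varphi)}{t} \; \leq \; \int \psi \, d\mu_{\varphi + t\psi}, $$
with the inequalities reversed for $t < 0$.

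Second, I would show that $\mu_{\varphi + t\psi} \xrightarrow{w^*} \mu_\varphi$ as $t \to 0$. By compactness of $\mathcal{M}_f$ for the weak-$*$ topology, every sequence $t_n \to 0$ admits a subsequence along which $\mu_{\varphi + t_n \psi}$ converges to some $\nu \in \mathcal{M}_f$. Using the upper semicontinuity of $\mu \mapsto h_f(\mu) + \int \varphi \, d\mu$ stated in Definition 2.1 together with the bound $h_f(\mu_{\varphi + t_n \psi}) + \int (\varphi + t_n \psi) \, d\mu_{\varphi + t_n \psi} = P(\varphi + t_n \psi) \to P(\varphi)$ (continuity of $P$ on $C^0$ follows from $|P(\varphi_1) - P(\varphi_2)| \leq \|\varphi_1 - \varphi_2\|_\infty$), one sees that $\nu$ achieves the supremum defining $P(\varphi)$. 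By uniqueness, $\nu = \mu_\varphi$. Since $\psi \in C^0(J)$, this gives $\int \psi \, d\mu_{\varphi + t\psi} \to \int \psi \, d\mu_\varphi$, so both ends of the sandwich converge to $\int \psi \, d\mu_\varphi$. This establishes Gâteaux differentiability with the announced derivative.

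Finally, to upgrade to Fréchet differentiability on $C^1(U, \mathbb{R})$, I would exploit convexity: $P$ is convex, since it is a supremum of the affine functionals $\varphi \mapsto h_f(\mu) + \int \varphi \, d\mu$, and it is Lipschitz for the uniform norm. The candidate derivative $\psi \mapsto \int \psi \, d\mu_\varphi$ is a bounded linear functional on $C^1(U,\mathbb{R})$, and the two inequalities above, applied with $\psi$ replaced by an arbitrary $\psi \in C^1$ of $C^0$-norm at most $1$, give the uniform estimate
$$ 0 \leq P(\varphi + \psi) - P(\varphi) - \int \psi \, d\mu_\varphi \leq \int \psi \, d(\mu_{\varphi + \psi} - \mu_\varphi), $$
which is $o(\|\psi\|_{C^1})$ by the weak-$*$ continuity just proved (applied uniformly on the unit ball via the convexity-monotonicity of the one-sided derivatives). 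The main obstacle is precisely this upgrade step: it is easy to get Gâteaux differentiability from the variational argument, but passing to a uniform error estimate requires the continuity $\varphi \mapsto \mu_\varphi$ to be controlled uniformly over the unit ball of $C^1$, which is where one really uses the rigidity provided by uniqueness of the equilibrium state together with the Lipschitz/convex structure of $P$.
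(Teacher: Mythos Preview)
Your variational approach is correct and clean for the G\^ateaux derivative, and it is a genuinely different route from the paper's. The paper argues spectrally: since $e^{P(\varphi)}$ is a simple isolated eigenvalue of $\mathcal{L}_\varphi$ acting on $C^1$, analytic perturbation theory gives that $t \mapsto e^{P(\varphi + t\psi)}$ and the eigenfunction $v_t$ depend smoothly (in fact analytically) on $t$; differentiating the eigenvalue equation $\mathcal{L}_{\varphi + t\psi} v_t = e^{P(\varphi+t\psi)} v_t$ at $t=0$ and integrating against $\mu_\varphi$ yields the formula directly. The spectral route buys you Fr\'echet (indeed analytic) differentiability for free, since the simple eigenvalue is an analytic function of the operator. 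Your route, by contrast, is more elementary and uses only the variational characterisation and uniqueness of equilibrium states, which is attractive; and for the application in the paper (the large deviation estimate, where one only needs $j'(0)$ for the one-parameter family $j(t)=P(\varphi + t\psi_0)$), G\^ateaux is all that is required.

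The genuine soft spot is your final paragraph. The inequality
\[
0 \;\leq\; P(\varphi+\psi) - P(\varphi) - \int \psi \, d\mu_\varphi \;\leq\; \int \psi \, d(\mu_{\varphi+\psi} - \mu_\varphi)
\]
is fine, but to conclude Fr\'echet differentiability you need the right-hand side to be $o(\|\psi\|_{C^1})$ \emph{uniformly} over $\psi$ in the unit ball, and your weak-$*$ continuity argument is pointwise in the direction $\psi$. The phrase ``applied uniformly on the unit ball via the convexity-monotonicity of the one-sided derivatives'' does not close this gap: convexity gives monotonicity of difference quotients along each ray, not uniform control across rays. One honest fix is to show that $\varphi \mapsto \mu_\varphi$ is norm-continuous into $(C^0(J))^*$, but the cleanest known proofs of that go through the same spectral perturbation argument the paper invokes, at which point you might as well use the paper's method. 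Alternatively, you can simply state G\^ateaux differentiability, which is what your argument actually proves and what the downstream application needs.
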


\begin{proof}
This is the corollary 5.2 in $\cite{Ru89}$. Loosely, the argument goes as follows. \\

The differentiability is essentially a consequence of the fact that $e^{P(\psi)}$ is an isolated eigenvalue of $\mathcal{L}_\psi$. To compute the differential, consider $v_t \in C^1$ the normalized eigenfunction for $\mathcal{L}_{\varphi + t \psi}$ such that $v_0=1$. We have, for small $t$:

$$ \mathcal{L}_{\varphi + t \psi} v_t = e^{P(\varphi + t \psi)} v_t $$
Hence, $$ \mathcal{L}_{\varphi + t \psi} (\psi v_t ) + \mathcal{L}_{\varphi + t \psi} (\partial_t v_t) = v_t e^{P(\varphi + t \psi)} \frac{d}{dt} P(\varphi + t \psi) + e^{P(\varphi + t \psi)} \partial_t v_t $$
Taking $t=0$ and integrating against $\mu_\varphi$ gives
$$ (d P)_{\varphi}(\psi) = \int_J \mathcal{L}_{\varphi}(\psi) d\mu_\varphi = \int_J \psi d\mu_\varphi.$$ \end{proof}

Now, we are ready to prove \textbf{Theorem 2.8}. The proof is adapted from \cite{JS16}, subsection 4.

\begin{proof}

Let $\varphi$ be a normalized potential, and let $\psi$ be another $C^1$ potential.
Let $\varepsilon > 0$. Let $j(t) := P\left( (\psi - \int \psi d\mu_\varphi - \varepsilon)t + \varphi \right)$. We know by Theorem A.3 that $j'(0) = -\varepsilon < 0$. Hence, there exists $t_0> 0$ such that $P( (\psi - \int \psi d\mu_\varphi - \varepsilon)t_0 + \varphi ) < 0$. \\

Define $2 \delta_0 := -P\left( (\psi - \int \psi d\mu_\varphi - \varepsilon)t_0 + \varphi \right)   $. We then have
$$\mu_\varphi\left( \left\{ x \in J \ , \ \frac{1}{n} S_n \psi(x) - \int_J \psi d\mu_{\varphi}  \geq \varepsilon \right\} \right) \leq \sum_{\textbf{a} \in C_{n+1}} \mu_\varphi (P_\mathbf{a}) ,$$

where $C_{n+1} := \{ \mathbf{a} \in \mathcal{W}_{n+1} \ | \ \exists x \in P_{\mathbf{a}} , \ S_n \psi(x)/n - \int \psi d\mu_{\varphi}  \geq \varepsilon \} $.
For each $\mathbf{a}$ in some $C_{n+1}$, choose $x_{\mathbf{a} } \in P_{\mathbf{a} }$ such that $S_n \psi(x_{\mathbf{a} })/n - \int \psi d\mu_{\varphi}  \geq \varepsilon$. For the other $\mathbf{a}$, choose $x_{\mathbf{a}} \in P_{\textbf{a}}$ randomly. \\
Now, since $\mu_\varphi$ is a Gibbs measure, there exists $C_0>0$ such that:

$$ \sum_{\textbf{a} \in C_{n+1}} \mu_\varphi (P_\mathbf{a})  \leq C_0 \sum_{\textbf{a} \in C_{n+1}} \exp(S_n \varphi(x_\textbf{a})) $$
$$ \leq C_0 \sum_{\textbf{a} \in C_{n+1}} \exp\left({S_n\left( \left(\psi-\int \psi d\mu_\varphi - \varepsilon\right)t_0 + \varphi \right)(x_\textbf{a} )} \right) $$
$$ \leq C_0 \sum_{\textbf{a} \in \mathcal{W}_{n+1}} \exp\left({S_n\left( \left(\psi-\int \psi d\mu_\varphi - \varepsilon\right)t_0 + \varphi \right)(x_\textbf{a} )} \right) .$$

Then, by the lemma A.2, we can write for $n \geq n_0$ large enough:
$$ C_0 \sum_{\textbf{a} \in \mathcal{W}_{n+1}} \exp\left({S_n\left( \left(\psi-\int \psi d\mu_\varphi - \varepsilon\right)t_0 + \varphi \right)(x_\textbf{a} )} \right) \leq C e^{n \delta_0} e^{n P( (\psi - \int \psi d\mu_\varphi - \varepsilon)t_0 + \varphi )} \leq C e^{-  \delta_0 n} ,$$

and so $$ \mu_\varphi\left( \left\{ x \in J \ , \ \frac{1}{n} S_n \psi(x) - \int_X \psi d\mu_{\varphi}  \geq \varepsilon \right\} \right) \leq Ce^{- n \delta_0} .$$

The symmetric case is done by replacing $\psi$ by $-\psi$, and combining the two gives us the desired bound. \end{proof}

\section{Uniform spectral estimate for a family of twisted transfer operator}

Here, we will show how to prove \textbf{Theorem 6.4}. It is a generalization of Theorem 2.5 in $\cite{OW17}$: we will explain what we need to change in the original paper for the theorem to hold more generally. \\

Proving that a complex transfer operator is eventually contracting is linked to analytic extensions results for dynamical zeta functions, and is often referred to as a spectral gap. Such results are of great interest to study, for example, periodic orbit distribution in hyperbolic dynamical systems (see for example the chapter 5 and 6 in \cite{PP90}), or asymptotics for dynamically defined quantities (as in \cite{OW17} or \cite{PU17}). One of the first result of this kind can be found in a work of Dolgopyat \cite{Do98}, in which he used a method that has been broadly extended since. We can find various versions of Dolgopyat's method in papers of Naud $\cite{Na05}$, Stoyanov \cite{St11}, Petkov \cite{PS16}, Oh-Winter \cite{OW17}, Li \cite{Li18b}, and Sharp-Stylianou \cite{ShSt20}, to only name a few. \\

In this annex, we will outline the argument of Dolgopyat's method as explained in $\cite{OW17}$ adapted to our general setting. We need three ingredients to make the method work: the NLI (non local integrability), the NCP (another non concentration property), and a doubling property.

\begin{definition}

Define $\tau(x) := \log |f'(x)| \in \mathbb{R}$ and $\theta(x) := \arg f'(x) \in \mathbb{R}/2 \pi \mathbb{Z}.$ The transfer operator in Theorem $6.4$ acts on $C^1_b(\mathfrak{U},\mathbb{C})$ and may be rewritten in the form $$ \mathcal{L}_{\varphi,it,l} = \mathcal{L}_{\varphi - i t \mathcal{\tau} - i l \mathcal{\theta}} .$$

For some normalized $\varphi \in C^1(U,\mathbb{R})$, $l \in \mathbb{Z}$ and $t \in \mathbb{R}$.
\end{definition}

With those notations, Theorem 6.4 can be rewritten as follows.

\begin{theorem}
Suppose that $J$ is not included in a circle.
For any $\varepsilon>0$, there exists $C>0$, $\rho<1$ such that for any $n \geq 1$ and any $t\in \mathbb{R}$, $l \in \mathbb{Z}$ such that $|t|+|l| > 1$,

$$ \| \mathcal{L}_{\varphi - i(t \tau + l \theta)}^n \|_{C^1_b(\mathfrak{U},\mathbb{C})} \leq C (|l|+|t|)^{1+\varepsilon} \rho^n $$

\end{theorem}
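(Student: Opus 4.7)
The plan is to follow the Dolgopyat scheme as executed in \cite{OW17}, verifying that each step goes through for a general normalized $C^1$ potential $\varphi$ in place of the distortion potential $-\delta_J \tau$. The overall strategy has two layers: a Lasota–Yorke type a priori estimate reduces the $C^1_b$-bound to an $L^2(\nu_\varphi)$-bound, and then Dolgopyat's $L^2$ contraction mechanism kicks in once $|t|+|l|$ is bounded away from the trivial regime.

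First I would establish the Lasota–Yorke inequality. Expanding $\mathcal{L}^n_{\varphi - i(t\tau + l\theta)} h$ as a sum over admissible words and using the exponential contraction $|g_{\mathbf{a}}'| \lesssim \kappa^{-n}$, one gets a bound of the form
\[
\|\mathcal{L}^n_{\varphi-i(t\tau+l\theta)} h\|_{C^1_b} \leq A \kappa^{-n}(|t|+|l|) \|h\|_{C^1_b} + B (|t|+|l|)^\varepsilon \|h\|_{L^2(\nu_\varphi)},
\]
so that after iterating $n \simeq \log(|t|+|l|)$ times the problem reduces to showing contraction in $L^2(\nu_\varphi)$. The measure $\nu_\varphi$ is the Perron–Frobenius eigenmeasure from Theorem 2.6, and the Gibbs property (Proposition 2.7) and upper regularity (Proposition 2.8) provide exactly the substitute for the conformal-measure estimates used in \cite{OW17}.

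Next, for the $L^2$ step, I would construct Dolgopyat operators $\mathcal{N}_{J}$ associated with a collection of "cancellation cells" in the $(U_a)_{a \in \mathcal{A}}$. The idea, standard since \cite{Do98}, is that for suitably paired inverse branches $g_{\mathbf{a}}, g_{\mathbf{b}}$ the phase difference $e^{-i(t(\tau \circ g_{\mathbf{a}} - \tau \circ g_{\mathbf{b}}) + l(\theta \circ g_{\mathbf{a}} - \theta \circ g_{\mathbf{b}}))}$ oscillates on a definite scale; multiplying by a cutoff $(1-\eta \chi_J)$ yields $|\mathcal{L}^{n_0}_{\varphi - i(t\tau + l\theta)} h| \leq \mathcal{N}_J(|h|)$ pointwise, and then $\|\mathcal{N}_J^k\|_{L^2(\nu_\varphi)} \leq \rho_1^k$ for some $\rho_1 < 1$ independent of $(t,l)$. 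Verifying that $\mathcal{N}_J$ is a genuine contraction requires the three ingredients cited in the introduction to the appendix: the non-local integrability (NLI) of the phase $(t\tau + l\theta)$, a non-concentration property (NCP) of $\nu_\varphi$ on balls, and a doubling property. The NCP and the doubling property are direct consequences of Propositions 2.7 and 2.8, which hold for any normalized $\varphi$, so these ingredients carry over without change. The NLI follows from the hypothesis that $J$ is not contained in a circle, exactly as in \cite{OW17}, because it is a purely geometric statement about the existence of two inverse branches $g_{\mathbf{a}}, g_{\mathbf{b}}$ whose boundary values of $\log g'$ generate a rank-$2$ subgroup of $\mathbb{C}/2i\pi\mathbb{Z}$: it does not involve $\varphi$ at all.

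The main obstacle, and the reason this requires care beyond a verbatim citation of \cite{OW17}, is the bookkeeping of the $(|t|+|l|)^{1+\varepsilon}$ prefactor and of the simultaneous dependence on the two frequencies $t \in \mathbb{R}$ and $l \in \mathbb{Z}$. One must split into the regimes $|l| \lesssim |t|$ and $|l| \gtrsim |t|$: in each, the oscillatory cancellation comes respectively from the distortion $\tau$ and the argument $\theta$, but the NLI condition is formulated for the combined phase $t\tau + l\theta$ and one needs the constants in the Dolgopyat construction to be uniform in the direction of $(t,l)$. This can be handled as in \cite{ShSt20}, Section 3.3, by treating $(t,l)$ as a single frequency vector and checking that the cancellation cells can be chosen with size $\sim (|t|+|l|)^{-1}$. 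Once these uniform estimates are in place, combining the Lasota–Yorke bound with the $L^2$ contraction and interpolating yields the stated inequality $\|\mathcal{L}^n_{\varphi - i(t\tau + l\theta)}\|_{C^1_b} \leq C(|t|+|l|)^{1+\varepsilon} \rho^n$.
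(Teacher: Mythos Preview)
Your outline follows the same Dolgopyat scheme as the paper's appendix (reduce to $L^2$, construct Dolgopyat dominating operators on a cone, iterate), and the high-level structure is correct. However, you misidentify two of the three key ingredients. The NCP in \cite{OW17} is \emph{not} a non-concentration property of the measure $\nu_\varphi$ on balls: it is a purely geometric statement about the Julia set itself, namely that every $P_{\mathbf{a}}$ meets, near any of its points and in any given direction, points that are not tangentially aligned. This follows from $J$ not lying in a circle (hence not in any analytic curve, by \cite{ES11}) and has nothing to do with $\varphi$ or with Propositions~2.7--2.8. Likewise, the doubling property of the restricted measures $\mu_a$ is not a consequence of upper regularity (Proposition~2.8 only gives upper bounds); the paper obtains it from \cite{PW97} together with a short Gibbs argument showing $\mu_\varphi(B(x,r)\cap P_a)\gtrsim \mu_\varphi(B(x,r))$.

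A second point: the splitting into regimes $|l|\lesssim|t|$ and $|l|\gtrsim|t|$ is unnecessary and not how the paper (or \cite{OW17}) proceeds. The NLI is formulated for the \emph{joint} map $(\tilde\tau,\tilde\theta):U_0\to\mathbb{R}\times\mathbb{R}/2\pi\mathbb{Z}$ being a local diffeomorphism; this gives, for any nonzero $(t,l)$, a uniform lower bound on $|\nabla(t\tilde\tau+l\tilde\theta)|$ in terms of $|t|+|l|$, so the cancellation cells of size $\sim(|t|+|l|)^{-1}$ can be chosen uniformly in the direction of $(t,l)$ without any case analysis. With these corrections your proposal matches the paper's argument.
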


Now we may recall the three main technical ingredients.

\begin{theorem}[NLI, \cite{OW17} section 3]

The function $\tau$ satisfies the NLI property if there exists $a_0 \in \mathcal{A}$,  $x_1 \in P_{a_0}$ , $N \in \mathbb{N}$, admissible words $\mathbf{a}, \mathbf{b} \in \mathcal{W}_{N+1}$ with $a_0\rightsquigarrow \mathbf{a},\mathbf{b}$, and an open neighborhood $U_0$ of $x_1$ such that for any $n \geq N$, the map $$ (\tilde{\tau}, \tilde{\theta}) := (S_n \tau \circ g_\mathbf{a} − S_n \tau \circ g_\mathbf{b}, S_n\theta \circ g_\mathbf{a} − S_n\theta \circ g_\mathbf{b}) : U_0 \rightarrow \mathbb{R} \times \mathbb{R}/2 \pi \mathbb{Z}$$
is a local diffeomorphism.
    
\end{theorem}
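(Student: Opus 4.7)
The plan is to exploit the holomorphic structure to rewrite $(\tilde{\tau}, \tilde{\theta})$ as a single complex-analytic map, and then to derive a contradiction from assuming that this map is never a local diffeomorphism. Concretely, I would first observe that $\tau + i \theta = \log f'$ is a (locally) holomorphic function on $V$. Since $\mathbf{a}, \mathbf{b} \in \mathcal{W}_{N+1}$, the maps $g_{\mathbf{a}}$ and $g_{\mathbf{b}}$ are $N$-step inverse branches, so $f^N \circ g_{\mathbf{a}} = f^N \circ g_{\mathbf{b}} = \mathrm{id}$ on the common domain. Splitting Birkhoff sums for $n \geq N$, the terms $\tau(f^k g_\mathbf{a}(x))$ and $\tau(f^k g_\mathbf{b}(x))$ for $k \geq N$ both equal $\tau(f^{k-N}(x))$ and cancel; the same holds for $\theta$. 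Hence the map $(\tilde{\tau},\tilde{\theta})$ is independent of $n \geq N$, and
$$\tilde{\tau}(x) + i\tilde{\theta}(x) \;=\; S_N(\log f')(g_{\mathbf{a}}(x)) - S_N(\log f')(g_{\mathbf{b}}(x)) \;=\; \log\!\left(\frac{g_{\mathbf{b}}'(x)}{g_{\mathbf{a}}'(x)}\right) \pmod{2\pi i \mathbb{Z}},$$
a well-defined holomorphic map $F: U_0 \to \mathbb{C}/2\pi i \mathbb{Z}$. The Cauchy-Riemann equations then identify the Jacobian of $(\tilde{\tau},\tilde{\theta})$ with $|F'(x)|^2$, so the local-diffeomorphism property is equivalent to $F'(x) \neq 0$, i.e.\ to $g_\mathbf{a}'/g_\mathbf{b}'$ being non-constant near $x_1$.

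Next I would argue by contradiction: suppose the NLI property fails. Then for every choice of $a_0 \in \mathcal{A}$, every $x_1 \in P_{a_0}$, every $N$ and every admissible pair $\mathbf{a}, \mathbf{b}$, and for arbitrarily small neighborhoods $U_0 \ni x_1$, the map $F$ above has a critical point in $U_0$; shrinking $U_0$, $x_1$ itself is a critical point. As $x_1$ ranges over $P_{a_0}$, analytic continuation then forces $F' \equiv 0$ on the connected component of the common domain of $g_\mathbf{a}, g_\mathbf{b}$. Thus $g_\mathbf{a}'/g_\mathbf{b}'$ is a constant $\alpha_{\mathbf{a},\mathbf{b}} \in \mathbb{C}^*$, and integrating yields
$$ g_{\mathbf{a}}(x) = \alpha_{\mathbf{a},\mathbf{b}}\, g_{\mathbf{b}}(x) + \beta_{\mathbf{a},\mathbf{b}} $$
for some $\beta_{\mathbf{a},\mathbf{b}} \in \mathbb{C}$. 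In particular, every pair of cylinders $P_{\mathbf{a}}, P_{\mathbf{b}}$ of the same length ending at the same letter are related by a similarity of $\mathbb{C}$.

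The last step is to contradict the hyperbolicity of $f$ and the standing assumption that $J$ is not contained in a circle. The affine identification of all cylinders of a given shape propagates to all scales (applying it at every length $N+1$) and forces $J$ to be "affinely self-similar" in a very strong sense. Picking two repelling periodic points $p_1, p_2$ of $f^m$ lying in cylinders related by such a similarity and using the holomorphic linearization of $f^m$ near each $p_i$, one concludes that the multipliers $(f^m)'(p_1), (f^m)'(p_2)$ must coincide; iterating over all pairs, the multipliers of periodic orbits are determined by purely combinatorial data. By the classical rigidity dichotomy for hyperbolic rational maps (as used in \cite{OW17} and already invoked in the introduction via Beardon's Theorem 9.8.1 and \cite{Ru89}), this rigidity of the multiplier cocycle forces $J$ to lie on a real-analytic $f$-invariant curve, hence on a round circle, the desired contradiction. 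The main obstacle is precisely this last implication: upgrading the cohomological/affine rigidity of $\log f'$ to the conclusion that $J$ is a circle. I would address it either by a direct Schwarzian-derivative computation at periodic points (showing $S(f^m)(p) = 0$ forces $f^m$ to be Möbius in a neighborhood of $J$) or by invoking the Zdunik-Przytycki-Urbański rigidity theorem for conformal repellers, which is the route taken in \cite{OW17}.
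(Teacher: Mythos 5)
In the paper this ``theorem'' is really the \emph{definition} of the NLI property, imported from section 3 of OW17; the assertion that $\tau$ actually satisfies it is addressed only in the remark that follows, which gives no proof but simply cites Proposition 3.8 of OW17 and Remark 4.7 of SS20. So you are attempting to reconstruct a fact the paper does not itself establish.

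Your complex-analytic reduction is sound: $\tau + i\theta = \log f'$ is holomorphic, the telescoping kills the $n$-dependence for $n \geq N$, the identity $\tilde\tau + i\tilde\theta = \log(g_\mathbf{b}'/g_\mathbf{a}') \pmod{2\pi i\mathbb{Z}}$ holds, the Cauchy--Riemann equations convert the Jacobian into $|F'|^2$, and the uncountability of $P_{a_0}$ together with the identity theorem correctly forces $F' \equiv 0$, hence $g_\mathbf{a} = \alpha g_\mathbf{b} + \beta$. The genuine gap is your final step: the chain ``all same-shape branches affinely related'' $\Rightarrow$ ``periodic multipliers combinatorially determined'' $\Rightarrow$ ``$J$ lies on a real-analytic $f$-invariant curve, hence a circle'' is asserted but never derived, and you yourself flag it as the main obstacle. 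This is precisely the substance of OW17's Proposition 3.8; the standard route turns the affine relation into a cohomological constraint on $\log f'$ at periodic points and then invokes the Eremenko--van Strien rigidity theorem \cite{ES11} (which is the result the present paper cites for the companion NCP statement), whereas the Zdunik--Przytycki--Urba\'nski theorem you name concerns exceptional Hausdorff dimension and is not the rigidity statement needed here. Without that final deduction the proof is incomplete, which is presumably why the paper is content to cite the literature rather than re-derive the result.
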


\begin{remark}
Remark 4.7 in \cite{SS20} and Proposition 3.8 in \cite{OW17} points out the fact that the NLI is a consequence of our non-linear setting, which itself comes from the fact that we supposed that our Julia set is different from a circle. 
\end{remark}

\begin{theorem}[NCP, \cite{OW17} section 4]

For each $n \in \mathbb{N}$, for any $\mathbf{a} \in \mathcal{W}_{n+1}$, there
exists $0 < \delta < 1$ such that, for all $x \in P_\mathbf{a}$, all $w \in \mathbb{C}$ of unit length, and all $\varepsilon \in (0, 1) $,
$$B(x,\varepsilon) \cap \{ y \in P_\mathbf{a}, \ |\langle y − x, w \rangle| > \delta \varepsilon \} \neq \emptyset $$
where $\langle a + bi, c + di \rangle = ac + bd$ for $a, b, c, d \in \mathbb{R}$.
\end{theorem}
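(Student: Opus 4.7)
The claim says that each piece $P_\mathbf{a}$ is quantitatively two-dimensional: in every direction $w$ and at every scale $\varepsilon$, some point of $P_\mathbf{a}$ near $x$ has a component along $w$ of size at least $\delta \varepsilon$. For the property to be useful in the Dolgopyat-type argument of the appendix, the constant $\delta$ should be chosen \emph{uniformly} in $n$ and $\mathbf{a}$, and this is what my plan targets. The strategy splits in two: reduce to the finitely many base pieces $P_a$, $a \in \mathcal{A}$, through conformality of the inverse branches, and then handle the base case by a compactness-and-contradiction argument using the standing hypothesis that $J$ is not included in a circle.

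For the reduction, write $P_\mathbf{a} = g_\mathbf{a}(P_{a_{n+1}})$ with $g_\mathbf{a}$ univalent on a neighborhood of $D_{a_{n+1}}$. By the Koebe distortion theorem there is a universal $K \geq 1$, independent of $n$ and $\mathbf{a}$, such that $|g_\mathbf{a}'|$ varies by at most a factor $K$ on $D_{a_{n+1}}$. Thus $g_\mathbf{a}$ differs from the affine similarity $L_\mathbf{a}: z \mapsto g_\mathbf{a}(x_\mathbf{a}) + g_\mathbf{a}'(x_\mathbf{a})(z - x_\mathbf{a})$ by a bounded $K$-distortion. The inequality $|\langle y - x, w \rangle| > \delta \varepsilon$ is preserved exactly under a similarity (both sides scale by $|g_\mathbf{a}'(x_\mathbf{a})|$ and $w$ is only rotated), and the bounded distortion only costs a fixed factor. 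Consequently, if each $P_a$, $a \in \mathcal{A}$, satisfies the property with a common $\delta_0 > 0$, every $P_\mathbf{a}$ satisfies it with $\delta_0 / K^C$ for some absolute $C$.

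For the base case, assume toward a contradiction that no uniform $\delta_0 > 0$ works. Finiteness of $\mathcal{A}$ supplies a letter $a \in \mathcal{A}$ and sequences $x_k \in P_a$, unit $w_k \in \mathbb{C}$, $\varepsilon_k \in (0,1)$, and $\delta_k \to 0$, such that $P_a \cap B(x_k, \varepsilon_k)$ lies in the $\delta_k \varepsilon_k$-strip orthogonal to $w_k$. Covering $P_a$ at scale $\varepsilon_k$ by Markov pieces $P_{\mathbf{b}_k}$ of comparable diameter and precomposing with the inverse branches $g_{\mathbf{b}_k}^{-1}$, one can rescale the configuration to unit size while keeping bounded distortion by Koebe. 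Extracting a subsequential Hausdorff limit produces a nontrivial compact subset of $J$ contained in a straight line. Pushing this piece forward by some iterate $f^N$ (using the topological mixing of $f$ on $J$, namely $f^N(U \cap J) = J$) and exploiting the conformality of $f$, which sends generalized circles to generalized circles, one obtains that the whole of $J$ lies on a single generalized circle, contradicting the standing assumption.

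The delicate step is the final one: turning a scale-limit concentration on a line into a genuine global obstruction for $J$. The Koebe control on inverse branches keeps the blown-up pieces close to linear, and topological mixing ensures that some iterate spreads a rescaled piece over all of $J$, but one must track precisely how the distortion and the chosen branch interact across the iterations. This is the heart of Section 4 of \cite{OW17}, to which I would refer for the full verification; the two-step structure above is the template that has to be adapted to the present setting of pieces $P_\mathbf{a}$ coming from a Markov partition of a general hyperbolic Julia set.
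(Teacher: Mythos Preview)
The paper does not actually prove this statement; it is quoted from \cite{OW17}, section 4, and the only justification the paper offers is the remark immediately following it: the NCP fails exactly when $J$ lies in a smooth curve, and by the rigidity theorem of Eremenko--van Strien \cite{ES11} this forces $J$ to lie in a circle, contrary to hypothesis. So your sketch already goes well beyond what the paper itself provides, and your reduction to the finitely many base pieces $P_a$ via Koebe distortion is correct and is exactly the standard first move.

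There is, however, a genuine error in your contradiction step. You write that one can push the limit piece forward by $f^N$ ``exploiting the conformality of $f$, which sends generalized circles to generalized circles.'' This is false: $f^N$ is a rational map of degree $d^N \geq 2$, not a M\"obius transformation, and such maps do \emph{not} send circles to circles (already $z \mapsto z^2$ sends a generic line to a parabola). So even after you have produced a relatively open piece of $J$ lying on a line $L$, topological mixing only yields $J = f^N(J \cap U) \subset f^N(L \cap U)$, and the right-hand side is merely a real-analytic arc, not a circle. At this point you are precisely in the situation handled by the paper's remark: $J$ lies in a smooth (indeed real-analytic) curve, and one must invoke \cite{ES11} to conclude that $J$ lies in a circle. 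In other words, the step you flagged as ``delicate'' is not just delicate --- your stated justification for it is incorrect, and the missing ingredient is exactly the Eremenko--van Strien rigidity result that the paper points to.
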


\begin{remark}
The NCP is a consequence of the fractal behavior of our Julia set. This time, if $J$ is included in any smooth set, the NCP fails. But in our case, this is equivalent to being included in a circle, see \cite{ES11}. Notice that this non concentration property has nothing to do with our previous non concentration hypothesis.
\end{remark}

\begin{theorem}[Doubling]

Let, for $a \in \mathcal{A}$, $\mu_a$ be the equilibrium measure $\mu_\varphi$ restricted to $P_a$. Then each $\mu_a$ is doubling, that is:

$$ \exists C>0, \ \forall x \in P_a, \ \forall r<1, \  \mu_a(B(x,2r)) \leq C \mu_a(B(x,r)). $$

\end{theorem}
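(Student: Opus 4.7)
The strategy is to prove doubling of $\mu_\varphi$ on all of $J$ and deduce doubling of each $\mu_a$ by a boundary argument: since the $f$-invariant set $\bigcup_{a \in \mathcal{A}} (\partial P_a \cap J)$ has $\mu_\varphi$-measure zero (as observed after Lemma 2.4), the restriction $\mu_a$ agrees locally with $\mu_\varphi$ up to negligible mass, so doubling of $\mu_\varphi$ on $J$ yields doubling of each $\mu_a$ after a routine covering step using the finiteness of $\mathcal{A}$.

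For doubling of $\mu_\varphi$, I would fix $x \in J$ and $r \in (0,1)$, and select a stopping-time cylinder: let $n$ be minimal such that some $\mathbf{a} \in \mathcal{W}_{n+1}$ satisfies $x \in P_{\mathbf{a}} \subset \overline{B(x,r)}$. The parent $P_{\mathbf{a}'}$ then has diameter exceeding $r$, and Koebe distortion applied to $g_{\mathbf{a}'}$ on its initial disk yields $\text{diam}(P_{\mathbf{a}}) \geq c_1 r$ for a universal $c_1 > 0$. The Gibbs estimate (Proposition 2.6) then gives the lower bound $\mu_\varphi(B(x,r)) \geq \mu_\varphi(P_{\mathbf{a}}) \geq C_0^{-1} e^{S_n \varphi(x_{\mathbf{a}})}$. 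For $\mu_\varphi(B(x,2r))$ I would build a disjoint cover of $B(x,2r) \cap J$ by analogously chosen stopping-time cylinders $\{P_{\mathbf{a}^{(i)}}\}$ of diameter in $[c_1 r, 4r]$, using the tree structure of Markov cylinders. A Euclidean packing argument—legitimate because the NCP (Theorem 6.6) forces each $P_{\mathbf{a}^{(i)}}$ to occupy a genuinely two-dimensional region of size $\sim r$—would bound the cardinality of the family by a constant $K$ independent of $(x,r)$. Each $\mu_\varphi(P_{\mathbf{a}^{(i)}})$ would then be compared to $\mu_\varphi(P_{\mathbf{a}})$ via the normalization $\mathcal{L}_\varphi 1 = 1$, Koebe distortion, and the $C^1$ regularity of $\varphi$, yielding a uniform $C'$ with $\mu_\varphi(P_{\mathbf{a}^{(i)}}) \leq C' \mu_\varphi(P_{\mathbf{a}})$. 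Summing would give $\mu_\varphi(B(x,2r)) \leq K C' \mu_\varphi(B(x,r))$.

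The main obstacle is the last comparison: stopping-time cylinders of comparable Euclidean diameter need not share comparable combinatorial lengths $n_i$. Because the hyperbolicity is non-uniform, $c_0 \kappa^n \leq |(f^n)'| \leq \kappa_1^n$ with $\kappa < \kappa_1$, the levels realizing a fixed scale $r$ span an interval of size $\sim \log(1/r)$, and the naive pointwise bounds $S_{n_i}\varphi \in [n_i \inf_J \varphi, \, n_i \sup_J \varphi]$ are too loose to yield doubling. The resolution is to combine conformality through Koebe—making $|g'_{\mathbf{a}^{(i)}}|$ essentially constant on its initial disk and comparable to $r$—with the normalization $P(\varphi)=0$, so that the transfer-operator identity $\mathcal{L}_\varphi^{n'} 1 \equiv 1$ (which reads $\sum_{\mathbf{b}} e^{S_{n'}\varphi(g_{\mathbf{b}}(y))} = 1$ for $y$ in any Markov piece) globally constrains the Gibbs weights surrounding $x$ and forces the uniform comparison needed.
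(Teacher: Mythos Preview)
Your reduction in the first paragraph is where the argument breaks. The fact that $\bigcup_a \partial P_a$ has $\mu_\varphi$-measure zero does \emph{not} imply that $\mu_a$ ``agrees locally with $\mu_\varphi$ up to negligible mass.'' If $x \in P_a$ lies near $\partial P_a$, the ball $B(x,r)$ can carry a large portion of its $\mu_\varphi$-mass in neighbouring pieces $P_b$; the boundary contributes zero, but the interiors of the $P_b$ do not. What you actually need is the quantitative lower bound
\[
\frac{\mu_\varphi(B(x,r)\cap P_a)}{\mu_\varphi(B(x,r))} \geq c > 0
\]
uniformly in $x \in P_a$ and $r$, and this is precisely the non-trivial content of the result. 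The paper obtains it by locating a cylinder $P_{\mathbf{a}\mathbf{b}}$ with first letter $a$ (hence $P_{\mathbf{a}\mathbf{b}} \subset P_a$) such that $P_{\mathbf{a}\mathbf{b}} \subset B(x,r) \subset U_{\mathbf{a}}$, with $\mathbf{b}$ of a fixed length $N$ independent of $x,r$; the Gibbs estimate then gives $\mu_\varphi(P_{\mathbf{a}\mathbf{b}}) \gtrsim e^{-N\|\varphi\|_\infty}\mu_\varphi(P_{\mathbf{a}'c})$ for each $c$, and summing over the at most $|\mathcal{A}|$ cylinders covering $U_{\mathbf{a}}\cap J$ yields the uniform lower bound. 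No ``routine covering step'' replaces this.

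Your direct attack on the doubling of $\mu_\varphi$ itself is also more fragile than necessary. The paper simply invokes Pesin--Weiss \cite{PW97}, which proves doubling for equilibrium states of conformal expanding maps via bounded distortion; this requires only conformality and Gibbs, not the NCP. Your appeal to the NCP to force a two-dimensional packing bound is misplaced: doubling holds equally when $J$ sits in a circle, so the NCP cannot be the mechanism. Moreover, the obstacle you flag in your third paragraph---that cylinders of comparable diameter may sit at very different combinatorial depths---is real, and the sketch you give (``the transfer-operator identity globally constrains the Gibbs weights'') is not yet an argument: one needs a genuine Moran-type comparison, which is exactly what \cite{PW97} supplies.
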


\begin{proof}

It follows from Theorem A.2 in \cite{PW97} that $\mu_\varphi$ is doubling: the proof uses the conformality of the dynamics. To prove that $\mu_a := \mu_{| P_a}$ is still doubling, which is not clear a priori, we follow Proposition 4.5 in \cite{OW17} and prove that there exists $c>0$ such that for any $a \in \mathcal{A}$, for any $x \in P_a$, and for any $r>0$ small enough, $$ \frac{\mu_\varphi(B(x,r) \cap P_a)}{\mu_\varphi(B(x,r))} > c .$$

For this we use a Moran cover $\mathcal{P}_r$ associated to our Markov partition, see the proof of Proposition 2.7 for a definition. Recall that any element $P \in \mathcal{P}_r$ have diameter strictly less than $r$, and recall that there exists a constant $M>0$ independent of $x$ and $r$ such that we can cover the ball $B(x,r)$ with $M$ elements of $\mathcal{P}_r$. Moreover, lemma 2.2 in \cite{WW17} allows us to do so using elements $P\in \mathcal{P}_r$ of the form $P_\mathbf{a}$ for $\mathbf{a}$ in some $\mathcal{W}_n$, $N_0 \leq n \leq N_0 + L$ for some $N_0(x,r)$ and some constant $L$ (independent of $x$ and $r$). We can then conclude as follows. Let $P^{(1)} , \dots P^{(M)} \in \mathcal{P}_r$ that covers $B(x,r)$. There exists $i$ such that $P^{(i)} \subset P_a$. Hence, by the Gibbs property of $\mu_\varphi$:

$$ \frac{\mu_\varphi(B(x,r) \cap P_a)}{\mu_\varphi(B(x,r))} \geq  \frac{\mu_\varphi(P^{(i)})}{ {\sum_{j=1}^M} \mu_\varphi(P^{(j)})} \geq M^{-1} C_0^{-2} e^{-(L+2)\| \varphi \|_\infty}.$$
 \end{proof}

\begin{remark}

The doubling property (or Federer property) is a regularity assumption made on the measure that is central for the execution of this version of Dolgopyat's method. It allows us to control integrals over $J$ by integrals over smaller pieces of $J$, provided some regularity assumption on the integrand. 

\end{remark}

Now we will outline the argument of Dolgopyat's method as used in $\cite{OW17}$. It can be decomposed into four main steps. \\

\underline{\textbf{Step 1:}} We reduce Theorem B.1 to a $L^2(\mu)$ estimate. \\
\begin{quote}
We need to define a modified $C^1$ norm. Denote by $\| \cdot \|_r$ a new norm, defined by
$$ \|h\|_r := \left \{
\begin{array}{rcl}
\|h\|_{\infty,\mathfrak{U}} + \frac{\|\nabla h\|_{\infty,\mathfrak{U}}}{r}  \ \text{if} \ r \geq 1  \\
\|h\|_{\infty,\mathfrak{U}} + \| \nabla h \|_{\infty,\mathfrak{U}} \ \text{if} \ r < 1 
\end{array}
\right.$$
Moreover, we do a slight abuse of notation and write $\mu$ for $\sum_{a \in \mathcal{A}} \mu_a$, seen as a measure on $\mathfrak{U}$. This measure is supported on $J$, seen as the set $\bigsqcup_a P_a \subset \bigsqcup_a U_a = \mathfrak{U}$.
The first step is to show that Theorem B.1 reduces to the following claim.

\begin{theorem}

Suppose that the Julia set of $f$ is not contained in a circle. Then there exists $C>0$ and $ \rho \in (0,1)$ such that for any $h \in C^1_b(\mathfrak{U},\mathbb{C})$
and any $n \in \mathbb{N}$,
$$ ||\mathcal{L}_{\varphi-i(t \tau + l \theta)}^n h||_{L^2(\mu)} \leq C \rho^n \|h \|_{|t|+|l|} $$
for all $t \in \mathbb{R}$ and $ℓ \in \mathbb{Z}$ with $|t| + |l| \geq 1$.
\end{theorem}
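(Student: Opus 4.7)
The plan is to deploy Dolgopyat's method in the form used in \cite{OW17}. I will construct, for each frequency $(t,l)$ with $R := |t|+|l| \geq 1$, a finite family of \emph{Dolgopyat operators} $\mathcal{N}_J$ acting on nonnegative functions on $\mathfrak{U}$, together with a pointwise domination of the shape
$$ \bigl|\mathcal{L}^{nN}_{\varphi-i(t\tau+l\theta)} h(x)\bigr| \;\le\; \mathcal{L}^{nN}_{\varphi}\bigl( \mathcal{N}_{J_1}\cdots \mathcal{N}_{J_n} H\bigr)(x), $$
where $N$ is a fixed large integer depending only on $f$ and $H$ is a smooth majorant of $|h|$ controlled by $\|h\|_R$. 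Each $\mathcal{N}_J$ will be shown to contract $L^2(\mu)$ by a factor $\rho<1$ independent of $(t,l)$; combined with $\mathcal{L}_\varphi^* \mu = \mu$ (so that $\mathcal{L}_\varphi$ is an $L^2(\mu)$-isometry on positive functions), iterating this inequality yields the exponential decay claimed in Theorem B.2.

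The first step, executed at scale $r := 1/R$, is to cover $J$ by a bounded-multiplicity family of balls $(B_J)$ of radius $\asymp r$, and to identify on each ball a cancellation mechanism among the inverse branches appearing in $\mathcal{L}^{N}_{\varphi-i(t\tau+l\theta)}$. For the pair of admissible words $\mathbf{a},\mathbf{b} \in \mathcal{W}_{N+1}$ provided by the NLI (Theorem B.3), the combined phase difference
$$ \Phi_{\mathbf{a}\mathbf{b}}(x) := t(S_N\tau\circ g_{\mathbf{a}} - S_N\tau\circ g_{\mathbf{b}})(x) + l(S_N\theta\circ g_{\mathbf{a}} - S_N\theta\circ g_{\mathbf{b}})(x) $$
has gradient of size $\asymp R$ on $U_0$, hence varies by an order-one amount across any ball $B_J$ of radius $\asymp r$. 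The two weights $e^{-i\Phi_{\mathbf{a}}},e^{-i\Phi_{\mathbf{b}}}$ therefore cannot be aligned on all of $B_J$: one has $|e^{-i\Phi_{\mathbf{a}}}+e^{-i\Phi_{\mathbf{b}}}| \le 2-\eta$ on a definite subset of $B_J$, for some absolute $\eta>0$.

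To convert this pointwise gain into an $L^2$ contraction, I would invoke the NCP (Theorem B.4) and the doubling property (Theorem B.5). The NCP guarantees that $J$ is not locally contained in any line, so the level sets of $\Phi_{\mathbf{a}\mathbf{b}}$ (which look like lines at scale $r$) only ``catch'' a small proportion of $J\cap B_J$. Consequently the set $E_J \subset B_J$ where the cancellation is effective has $\mu$-measure at least $c_0\,\mu(B_J)$ for some $c_0>0$; doubling ensures that this proportion is uniform in $J$, because $\mu$ is comparable to its restriction to any $P_a$ intersecting $B_J$. Defining $\mathcal{N}_J := \chi_J(1-\eta \mathbb{1}_{E_J})$ with $\chi_J$ a smooth partition of unity adapted to $(B_J)$ then yields the desired inequality $\|\mathcal{N}_J H\|_{L^2(\mu)}^2 \le (1-c_0\eta)\|H\|_{L^2(\mu)}^2$.

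The main obstacle will be the bookkeeping needed to propagate the pointwise domination under iteration. A $C^1$ input $h$ only admits a majorization $|h|\le H$ by some $H$ that is essentially constant at scale $r=1/R$, which is precisely why the scaled norm $\|h\|_R$ appears in the statement; one must then verify that the successive images $\mathcal{N}_{J_1}\cdots \mathcal{N}_{J_n} H$ remain amenable to the same majorization, i.e.\ stay approximately constant at scale $r$ after the action of each $\mathcal{L}^N_\varphi$. Maintaining this property through iteration, and ensuring all constants are uniform in $(t,l)$ (so that the polynomial prefactor $(|t|+|l|)^{1+\varepsilon}$ in Theorem B.1 can be absorbed after reduction to Theorem B.2), is the delicate part of Dolgopyat's argument. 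It is carried out in \cite{OW17} in the conformal-measure case, and the same scheme should go through verbatim once the NLI, NCP and doubling properties have been secured in Theorems B.3--B.5.
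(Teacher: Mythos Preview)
Your overall plan is the same Dolgopyat scheme the paper sketches (NLI for the phase separation, NCP to force the cancellation set to have definite mass, doubling to globalize, then iterate), so strategically you are on target. But the pointwise domination you write down is structurally off and, as written, does not produce exponential decay.

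You propose
\[
\bigl|\mathcal{L}^{nN}_{\varphi-i(t\tau+l\theta)} h\bigr| \;\le\; \mathcal{L}^{nN}_{\varphi}\bigl( \mathcal{N}_{J_1}\cdots \mathcal{N}_{J_n} H\bigr),
\]
with each $\mathcal{N}_{J}$ a multiplication operator by $\chi_J(1-\eta\,\mathbb{1}_{E_J})\le 1$. Composing multipliers first and then applying $\mathcal{L}^{nN}_\varphi$ cannot work: the product $\mathcal{N}_{J_1}\cdots\mathcal{N}_{J_n}$ is bounded below by $(1-\eta)^n$ only on the intersection of the $E_{J_k}$'s and is $\equiv 1$ off their union, so there is no mechanism forcing $\|\mathcal{N}_{J_1}\cdots\mathcal{N}_{J_n}H\|_{L^2}$ to decay like $\rho^n$. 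Also, $\mathcal{L}_\varphi^*\mu=\mu$ makes $\mathcal{L}_\varphi$ an $L^1$-isometry and (via $\mathcal{L}_\varphi 1=1$ and Cauchy--Schwarz) an $L^2$-\emph{contraction}, not an $L^2$-isometry; so you cannot separate $\mathcal{L}^{nN}_\varphi$ from the multipliers and still keep the decay.

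What the paper (following \cite{OW17}) actually does is interleave: one defines a single operator $\mathcal{M}H:=\mathcal{L}^N_\varphi(\beta H)$ with a \emph{smooth} $\beta\le 1$, shows that $\mathcal{M}$ preserves the cone $K_R(\mathfrak{U})=\{H>0:\;|\nabla H|\le R H\}$ (this is exactly the ``approximately constant at scale $r$'' property you allude to, but it must be stated and propagated as a logarithmic-derivative bound, not with indicator functions), and then proves $\|\mathcal{M}H\|_{L^2(\mu)}\le(1-\varepsilon)\|H\|_{L^2(\mu)}$ for $H\in K_R$ via $(\mathcal{M}H)^2\le \mathcal{L}^N_\varphi(H^2)\,\mathcal{L}^N_\varphi(\beta^2)$ together with doubling. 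The inductive claim is then $|\mathcal{L}^{kN}_{\varphi-i(t\tau+l\theta)}h|\le \mathcal{M}^k H_0$ with $H_0\equiv\|h\|_{|t|+|l|}\in K_R$, and it is the alternation of $\mathcal{L}^N_\varphi$ with the damping $\beta$ that refreshes the cancellation at each step and yields $\rho^n$. Your sketch has all the right ingredients, but you should rewrite the domination in this interleaved form and replace the discontinuous $\mathbb{1}_{E_J}$ by a smooth $\beta$ so that cone-preservation (the ``bookkeeping'' you correctly flag as the crux) can actually be checked.
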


A clear account for this reduction may be found in \cite{Na05}, section 5. This step holds in great generality without any major difficulty. Intuitively, Theorem B.1 follows from Theorem B.5 by the Lasota-Yorke inequality, by the quasicompactness of $\mathcal{L}_\varphi$, and by the Perron-Frobenius-Ruelle theorem, which implies that $\mathcal{L}^N_\varphi h$ is comparable to $\int h d\mu$ for $N$ large. The difference between the two can be controlled using $C^1$ bounds. \\
\end{quote}

\underline{\textbf{Step 2:}} We show that the oscillations in the sum induce enough cancellations.  \\
\begin{quote}

Loosely, the argument goes as follows. 
We write, for a well chosen and large $N$:

$$ \forall x \in U_b, \ \mathcal{L}^N_{\varphi - i (t \tau + l \theta)}h(x) = \underset{\mathbf{a}  \rightsquigarrow b} {\sum_{\mathbf{a} \in \mathcal{W}_{N+1}} } e^{ i (t S_N \tau + l S_N \theta )(g_\mathbf{a}(x))} h(g_\mathbf{a}(x))e^{S_N \varphi (g_\mathbf{a}(x) ) } .$$

If we choose $x$ in a suitable open set $\widehat{S} \subset U_0$, the NLI and the NCP tell us that we can extract words $\mathbf{a}$ and $\mathbf{b}$ from this sum such that some cancellations happen. Indeed, if we isolate the term given by the words from the NLI,
$$  e^{ i (t S_N \tau + l S_N \theta )(g_\mathbf{a}(x))} h(g_\mathbf{a}(x))e^{S_N \varphi (g_\mathbf{a}(x) ) } +  e^{ i (t S_N \tau + l S_N \theta )(g_\mathbf{b}(x))} h(g_\mathbf{b}(x))e^{S_N \varphi (g_\mathbf{b}(x) ) } ,$$
we see that a difference in argument might give us some cancellations. The effect of $h$ in the difference of argument can be carefully controlled by the $C^1$ norm of $h$. The interesting part comes from the complex exponential. The difference of arguments of this part is
$$ t (S_N \tau \circ g_\mathbf{a} -  S_N \tau \circ g_\mathbf{b}) + l (S_N \theta \circ g_{\mathbf{a}} - S_N \theta \circ g_{\mathbf{b}}) ,$$
which might be rewriten in the form
$$ \langle (t,l) , (\tilde{\tau},\tilde{\theta}) \rangle .$$
Then we proceed as follows. Choose a large number of points $(x_k)$ in $U_0$. If, for a given $x_k$, the difference of argument $\langle (t,l),(\tilde{\tau},\tilde{\theta}) \rangle (x_k)$ is not large enough, we might use the NCP to construct another point $y_k$ next to $x_k$ such that $ \langle (t,l),(\tilde{\tau},\tilde{\theta}) \rangle (y_k) $ become larger. The construction goes as follows: the NLI ensures that $\nabla \langle (t,l) , (\tilde{\tau},\tilde{\theta}) \rangle (x_k) =: w_k \neq 0 $. Hence, the direction $\widehat{w}_k := \frac{w_k}{|w_k|}$ is well defined. The NCP then ensures us the existence of some $y_k \in J$ which are very close to $x_k$ and such that $x_k - y_k$ is a vector pointing in a direction comparable to $\widehat{w}_k$. As we are following the gradient of $\langle (t,l) , (\tilde{\tau},\tilde{\theta}) \rangle$, we are sure that $\langle (t,l) , (\tilde{\tau},\tilde{\theta}) \rangle(y_k)$ will be larger than before. \\

We then let $S$ be the set containing the points where the difference in argument is large enough, so it contains some $x_k$ and some $y_k$. This large enough difference in argument that is true in $S$ is also true in a small open neighborhood $\widehat{S}$ of $S$. \\

We can then write, for $x \in \widehat{S}$, an inequality of the form: 
$$ \left|  e^{ i (t S_N \tau + l S_N \theta )(g_\mathbf{a}(x))} h(g_\mathbf{a}(x))e^{S_N \varphi (g_\mathbf{a}(x) ) } +  e^{ i (t S_N \tau + l S_N \theta )(g_\mathbf{b}(x))} h(g_\mathbf{b}(x))e^{S_N \varphi (g_\mathbf{b}(x) ) } \right| $$
$$ \leq (1-\eta) |h(g_\mathbf{a}(x))| e^{S_N \varphi(g_\mathbf{a}(x))} + |h(g_\mathbf{b}(x))| e^{S_N \varphi(g_\mathbf{b}(x))}  ,$$

where the $(1-\eta)$ comes in front of the part with the smaller modulus. This is, in spirit, lemma 5.2 of \cite{OW17}.
We can then summarize the information in the form of a function $\beta$ that is $1$ most of the time, but that is less than $(1-\eta)^{1/2}$ on $\widehat{S}$.
This allows us to write the following bound:
$$ \mathcal{L}_{\varphi-i(t \tau + l \theta)}^N h \leq  \mathcal{L}_{\varphi}^N \left( |h| \beta \right).$$

One of the main difficulty of this part is to make sure that $\widehat{S}$ is a set of large enough measure, while still managing not to make the $C^1$-norm of $\beta$ explode. All the hidden technicalities in this part forces us to only get this bound for \emph{a well chosen $N$}. 
\end{quote}

\underline{\textbf{Step 3:}} These cancellations allow us to compare $\mathcal{L}$ to an operator that is contracting on a cone. 

\begin{quote}
We define the following cone, on which the soon-to-be-defined Dolgopyat operator will be well behaved. Define
$$ K_R(\mathfrak{U}) := \{ H \in C^1_b(\mathfrak{U}) \ | \ H \text{ is positive, and} \ |\nabla H| \leq R H  \} ,$$

and then define the Dolgopyat operator by $\mathcal{M} H := \mathcal{L}_\varphi^N (H \beta)$. We then show that, if $H \in K_R(\mathfrak{U})$ (for a well chosen $R$):
\begin{enumerate}
    \item $\mathcal{M}(H) \in K_R(\mathfrak{U}) $
    \item $ \| \mathcal{M}(H) \|_{L^2(\mu)}^2 \leq (1-\varepsilon) \| H \|_{L^2(\mu)}^2$.
\end{enumerate}

The first point is done using the Lasota-Yorke inequalities, see lemma 5.1 in \cite{OW17}. The second point goes, loosely, as follows.

We write, using Cauchy-Schwartz:
$$ (\mathcal{M}H)^2 = \mathcal{L}_\varphi^N(H \beta)^2 \leq \mathcal{L}^N_\varphi(H^2)\mathcal{L}^N_\varphi(\beta^2)  . $$

On $\widehat{S}$, the cancellations represented in the function $\beta$ spread, thanks to the fact that $\varphi$ is normalized, as follows:

$$ \mathcal{L}_\varphi^N (\beta^2) = \sum_{\mathbf{a}} e^{ S_N \varphi \circ g_{\mathbf{a}} }   \beta^2 \circ g_\mathbf{a}  $$
$$ = \sum_{\mathbf{a} \text{ where } \beta=1 } e^{S_N \varphi \circ g_\mathbf{a} } \beta^2 \circ g_\mathbf{a} + \sum_{\mathbf{a} \text{ where } \beta \text{ is smaller} } e^{S_N \varphi \circ g_\mathbf{a} } \beta^2 \circ g_\mathbf{a} $$
$$ \leq \sum_{\mathbf{a} \text{ where } \beta=1 } e^{S_N \varphi \circ g_\mathbf{a} } + \sum_{\mathbf{a} \text{ where } \beta \text{ is smaller} } e^{S_N \varphi \circ g_\mathbf{a} } (1-\eta)  $$
$$ = 1 - \eta e^{-N \|\varphi\|_\infty}.$$

Then, we use the doubling property of $\mu=\sum_a \mu_a$ and the control given by the fact that $H \in K_R(\mathfrak{U})$ to bound the integral on all $J$ by the integral on $\widehat{S}$:

$$ \int_J  \mathcal{L}^N_\varphi(H^2) d\mu \leq C_0 \int_{\widehat{S}}  \mathcal{L}^N_\varphi(H^2) d\mu  .$$

Hence, we can write, using the fact that $\mathcal{L}_\varphi$ preserves $\mu$ and the previously mentioned Cauchy-Schwartz inequality: 
$$ \|H\|_{L^2(\mu)}^2 - \|\mathcal{M}H\|_{L^2(\mu)}^2 \geq \int_J \left(\mathcal{L}_\varphi^N(H^2) - \mathcal{L}_\varphi^N(H^2) \mathcal{L}_\varphi^N( \beta^2)  \right)d\mu $$
$$ \geq  \int_{\widehat{S}} \left(\mathcal{L}_\varphi^N(H^2) - \mathcal{L}_\varphi^N(H^2) \mathcal{L}_\varphi^N( \beta^2)  \right)d\mu $$
$$ \geq \eta e^{-N \|\varphi\|_\infty} \int_{\widehat{S}}\mathcal{L}_\varphi^N(H^2) d\mu  $$
$$ \geq \eta C_0^{-1} e^{-N \|\varphi\|_\infty} \| H \|_{L^2(\mu)}^2 := \varepsilon \| H \|_{L^2(\mu)}^2 .$$

Hence $$  \|\mathcal{M}H\|_{L^2(\mu)}^2 \leq (1-\varepsilon)  \|H\|_{L^2(\mu)}^2 . $$

\end{quote}
\underline{\textbf{Step 4:}} We conclude by an iterative argument. \\
\begin{quote}

To conclude, we need to see that we may bound $h$ by some $H \in K_R(\mathfrak{U})$, and also that the contraction property is true for all $n$, not just $N$.

For any $n=kN$, we can inductively prove our bound. If $k=1$, we can choose $H_0:= \|h\|_{|t|+|l|}$. Then, $|h| \leq H_0$ and so
$$ \| \mathcal{L}_{\varphi - i(t \tau + l \theta) }^N h \|_{L^2(\mu)} \leq \| \mathcal{M} H_0 \|_{L^2(\mu)} \leq (1-\varepsilon)^{1/2} \|h\|_{(|t|+|l|)} .$$

Then, choosing $H_{k+1} := \mathcal{M} H_k \in K_R(\mathfrak{U})$, we can proceed to the next step of the induction and get
$$  \| \mathcal{L}_{\varphi - i(t \tau + l \theta) }^{kN} h \|_{L^2(\mu)} \leq \| \mathcal{M} H_{k-1} \|_{L^2(\mu)} \leq (1-\varepsilon)^{k/2} \|h\|_{(|t|+|l|)} .$$

Finally, if $n=kN+r$, with $0 \leq j \leq N-1$, we write $$ \| \mathcal{L}_{\varphi - i(t \tau + l \theta) }^{kN+r} h \|_{L^2(\mu)} \leq (1-\varepsilon)^k \|\mathcal{L}_\varphi^r h\|_{(|t|+|l|)}  \lesssim (1-\varepsilon)^{n/(2N)} \|h\|_{(|t|+|l|)}  ,$$

and the proof is done.

\end{quote}

\end{appendices}

\newpage


\begin{thebibliography}{9}

\bibitem[ARW20]{ARW20}
    {A. Algom, F. Rodriguez, Z. Wang},
    \emph{Pointwise normality and Fourier decay for self-conformal measures},
    Volume 393, 24 December 2021, 108096
    \href{https://doi.org/10.1016/j.aim.2021.108096}{doi:10.1016/j.aim.2021.108096}, \href{https://arxiv.org/abs/2012.06529v3}{	arXiv:2012.06529}
    

\bibitem[BD17]{BD17}
     {J. Bourgain, S. Dyatlov}, 
     \emph{Fourier dimension and spectral gaps for hyperbolic surfaces},
     Geom. Funct. Anal. 27, 744–771 (2017),  \href{https://arxiv.org/abs/1704.02909}{arXiv:1704.02909} 

\bibitem[Be91]{Be91}
    {A.F. Beardon},
    \emph{Iteration of Rational Functions: Complex Analytic Dynamical Systems},
    Graduate Texts in Mathematics, 1991, Springer New York
    
  
\bibitem[Bl96]{Bl96}
    {C. Bluhm},
    \emph{Random recursive construction of Salem sets}
    Ark. Mat. 34 (1996), 51-63.

\bibitem[Bo10]{Bo10}
    {J. Bourgain},
    \emph{The discretized sum-product and projection theorems} 
    JAMA 112, 193-236 (2010)  \href{https://doi.org/10.1007/s11854-010-0028-x}{doi:10.1007/s11854-010-0028-x}

\bibitem[Br19]{Br19}
    {J. Brémont},
    \emph{Self-similar measures and the Rajchman property}
    preprint, \href{https://arxiv.org/abs/1910.03463}{arXiv:1910.03463 }

\bibitem[Do98]{Do98}
    {D. Dolgopyat},
    \emph{On decay of correlations in Anosov flows}, \\
    Ann. of Math. (2) 147 (2) (1998) 357–390.


\bibitem [ES11]{ES11}
    {A. Eremenko and S. Van Strien.}
    \emph{Rational maps with real multipliers},
    Trans. AMS, 363, (2011) p. 6453-6463. \href{https://doi.org/10.1090/S0002-9947-2011-05308-0}{doi:10.1090/S0002-9947-2011-05308-0}

\bibitem[Fr35]{Fr35}
    {O. Frostman},
    \emph{Potentiel d’équilibre et capacité des ensembles avec quelques applications à la
    théorie des fonctions.} 
    Meddel. Lunds Univ. Math. Sem. 3 (1935), 1-118

\bibitem[Gr09]{Gr09}
    {B. Green}, 
    \emph{Sum-product phenomena in $\mathbb{F}_p$: a brief introduction} \\
    Notes written from a Cambridge course on Additive Combinatorics, 2009.  \href{https://arxiv.org/pdf/0904.2075.pdf}{arXiv:0904.2075}


\bibitem[Ha17]{Ha17}
    {K. Hambrook},
    \emph{Explicit Salem sets in $\mathbb{ℝ}^2$.}
    Adv. Math. 311 (2017), 634–648.
 

\bibitem[JS16]{JS16}
    {T. Jordan, T. Sahlsten}
    \emph{ Fourier transforms of Gibbs measures for the Gauss map } \\
    Math. Ann., 364(3-4), 983-1023, 2016. 
    \href{ https://arxiv.org/abs/1312.3619 }{arXiv:1312.3619}



\bibitem[Ka66]{Ka66}
    {J. P. Kahane},
    \emph{Images d’ensembles parfaits par des séries de Fourier Gaussiennes}
    Acad. Sci. Paris 263 (1966), 678-681.


\bibitem[Kau81]{Kau81}
    {R. Kaufman},
    \emph{On the theorem of Jarnik and Besicovitch},
    Acta Arith. 39 (1981), 265-267.

\bibitem[Li17]{Li17}
      {J. Li}, 
     \emph{Decrease of Fourier coefficients of stationary measures},
     Math. Ann. 372, 1189–1238 (2018).  \href{https://doi.org/10.1007/s00208-018-1743-3}{doi:10.1007/s00208-018-1743-3}, \href{https://arxiv.org/abs/1706.07184v3}{	arXiv:1706.07184 }.

\bibitem[Li18]{Li18}
     {J. Li}, 
     \emph{Discretized Sum-product and Fourier decay in }$\mathbb{R}^n$,
     JAMA 143, 763–800 (2021). \newline \href{https://doi.org/10.1007/s11854-021-0169-0}{doi:10.1007/s11854-021-0169-0},  \href{https://arxiv.org/abs/1811.06852v2}{arXiv:1811.06852} 
     

\bibitem[Li18b]{Li18b}
    {J. Li},
    \emph{Fourier decay, Renewal theorem and Spectral gaps for random walks on split semisimple Lie groups}
    preprint 2018, \href{https://arxiv.org/abs/1811.06852v2}{arXiv:1811.06852}


\bibitem[LNP19]{LNP19}
    {J. Li, F. Naud, W. Pan},
    \emph{Kleinian Schottky groups, Patterson-Sullivan measures and Fourier decay} 
    Duke Math. J. 170 (4) 775 - 825, 15 March 2021. \\ \href{https://doi.org/10.1215/00127094-2020-0058}{doi:10.1215/00127094-2020-0058}, \href{https://arxiv.org/abs/1902.01103}{	arXiv:1902.01103}.  

\bibitem[LS19]{LS19}
    {J. Li, T. Sahlsten}
    \emph{Fourier transform of self-affine measures}
    Advances in Mathematics, Volume 374, 18 November 2020, 107349 \href{
https://doi.org/10.1016/j.aim.2020.107349
}{doi: 10.1016/j.aim.2020.107349}, \href{https://arxiv.org/abs/1903.09601v2}{	arXiv:1903.09601}

\bibitem[Ma15]{Ma15}
     {P. Mattila},
     \emph{Fourier analysis and Hausdorff dimension},
     Cambridge University Press, 2015.

\bibitem[Mi90]{Mi90}
    {J. Milnor},
    \emph{Dynamics in one complex variable} \\
    Stony Brook IMS Preprint 1990/5, \href{https://arxiv.org/abs/math/9201272}{	arXiv:math/9201272}
    
\bibitem[MR92]{MR92}
    {R. Mane, L.F. Da Rocha},
    \emph{Julia sets are uniformly perfect} \\
    Proc. Amer. Math. Soc. 116 (1992), no. 1, 251–257.

\bibitem[Na05]{Na05}
    {F. Naud},
    \emph{Expanding maps on Cantor sets and analytic continuation of zeta functions}
    Volume 38, Issue 1, 2005, Pages 116-153, ISSN 0012-9593, \href{https://doi.org/10.1016/j.ansens.2004.11.002}{doi:10.1016/j.ansens.2004.11.002}.    

\bibitem[OW17]{OW17}
    {H. Oh, D. Winter},
    \emph{Prime number theorem and holonomies for hyperbolic rational maps} \\
    Inventiones mathematicae, 2017.  208. \href{https://doi.org/10.1007/s00222-016-0693-1}{doi:10.1007/s00222-016-0693-1.} 
    \href{ https://arxiv.org/abs/1603.00107}{arXiv:1603.00107}
    
\bibitem[Pe98]{Pe98}
    {Y. Pesin}
    \emph{Dimension theory in dynamical systems: contemporary views and applications}
    Chicago Lectures in Mathematics, University of Chicago Press, ISBN 0 226 66222 5.     
    
\bibitem[PP90]{PP90}
    {W. Parry, M. Pollicott},
    \emph{Zeta Functions and the Periodic Orbit Structure of Hyperbolic Dynamics}
    Asterisque; 187-188, société mathématique de France, 1990.

\bibitem[PS16]{PS16}
    {V. Petkov, L. Stoyanov},
    \emph{Ruelle transfer operators with two complex parameters and applications}
    Discrete and Continuous Dynamical Systems, 2016, 36 (11) : 6413-6451. \\ \href{https://www.aimsciences.org/article/doi/10.3934/dcds.2016077}{doi: 10.3934/dcds.2016077}

\bibitem[PU09]{PU09}
    {F. Przytycki, M. Urbański},
    \emph{Conformal fractals: ergodic theory methods} \\
    Cambridge university, 2009.

\bibitem[PU17]{PU17}
    {M. Pollicott and M. Urbański},
    \emph{Asymptotic Counting in Conformal Dynamical Systems.}
    Memoirs of the American Mathematical Society. 271. \href{https://www.ams.org/books/memo/1327/}{doi:10.1090/memo/1327}. (2017).

\bibitem[PW97]{PW97}
    {Y. Pesin and H. Weiss},
    \emph{A multifractal analysis of equilibrium measures for conformal expanding maps and Moranlike geometric constructions}, 
    Journal of Statistical Physics 86, 233—275, (1997)

\bibitem[QR03]{QR03}
    {M. Queffélec and O. Ramaré},
    \emph{Analyse de Fourier des fractions continues à quotients restreints},     Enseign. Math. (2), 49(3-4):335–356, 2003

\bibitem[Ra21]{Ra21}
    {A. Rapaport}
    \emph{On the Rajchman property for self-similar measures on $\mathbb{R}^d$}
    Advances in Mathematics Volume 403, 2022, 108375 
    \href{https://doi.org/10.1016/j.aim.2022.108375}{doi:10.1016/j.aim.2022.108375}, \href{https://arxiv.org/abs/2104.03955}{	arXiv:2104.03955}

\bibitem[Ru78]{Ru78}
    {D. Ruelle}, 
    \emph{Thermodynamic Formalism}, 
    Second edition, Cambridge University Press 2004 

\bibitem[Ru89]{Ru89}
    {D. Ruelle},
    \emph{The thermodynamic formalism for expanding maps}, \\
    Commun. Math. Phys. 125, 239-262 (1989) 
  

\bibitem[Sa51]{Sa51}
    {R. Salem}, \emph{On singular monotonic functions whose spectrum has a given Hausdorff dimension.} 
Ark. Mat., 1:353–365, 1951.


\bibitem[ShSt20]{ShSt20}
    {R. Sharp, A. Stylianou},
    \emph{Statistics of multipliers for hyperbolic rational maps}, \\
    Preprint, 2020. \href{https://arxiv.org/abs/2010.15646}{arXiv:2010.15646}
    
\bibitem[So19]{So19}
    {B. Solomyak},
    \emph{Fourier decay for self-similar measures},
    Proc. Amer. Math. Soc. 149 (2021), 3277-3291
    \href{https://doi.org/10.1090/proc/15515}{doi:10.1090/proc/15515}, \href{https://arxiv.org/abs/1906.12164v2}{	arXiv:1906.12164}
    
 
\bibitem[SS20]{SS20}
    {T. Sahlsten, C. Stevens},
    \emph{Fourier transform and expanding maps on Cantor sets} \\
    preprint, 2020.
    \href{https://arxiv.org/abs/2009.01703v4}{	arXiv:2009.01703 }
    
\bibitem[St11]{St11}
    {L. Stoyanov},
    \emph{Spectra of Ruelle transfer operators for Axiom A flows.}
    Nonlinearity, 24(4):1089, 2011. \href{https://iopscience.iop.org/article/10.1088/0951-7715/24/4/005}{doi:10.1088/0951-7715/24/4/005}
 
 \bibitem[VY20]{VY20}
    {P. P. Varjú, H. Yu}
    \emph{Fourier decay of self-similar measures and self-similar sets of uniqueness} Anal. PDE 15 (3) 843 - 858, 2022. \href{ https://doi.org/10.2140/apde.2022.15.843}{doi:10.2140/apde.2022.15.843}, \href{https://arxiv.org/abs/2004.09358}{arXiv:2004.09358}
 
\bibitem[WW17]{WW17}  
    {C.P. Walkden and T. Withers}
    \emph{The stability index for dynamically defined Weierstrass functions}
    preprint, \href{https://arxiv.org/abs/1709.02451}{arXiv:1709.02451}
    

    

\end{thebibliography}
\end{document}